\let\emptyset \undefined
\let\ge       \undefined
\let\le       \undefined
\let\leq\le
\let\geq\ge
\theoremstyle{plain}
\newtheorem{theorem}{Theorem}[section]
\theoremstyle{remark}
\newtheorem{remark}[theorem]{Remark}
\newtheorem{example}[theorem]{Example}
\theoremstyle{plain}
\newtheorem{corollary}[theorem]{Corollary}
\newtheorem{lemma}[theorem]{Lemma}
\newtheorem{proposition}[theorem]{Proposition}
\numberwithin{equation}{section}
\begin{document}

\title[Parameter estimation for stochastic wave equation]
{Parameter estimation for stochastic wave equation based on observation window}

\author{Josef Jan\' ak}
\address{Department of Mathematics\\
University of Economics in Prague\\
Ekonomick\' a 957, 148 00 Prague 4\\
Czech Republic}
\email{janj04@vse.cz}

\keywords{Parameter estimation, strong consistency, asymptotic normality}

\subjclass[2000]{62M05, 93E10, 60G35, 60H15}

\date\today

\begin{abstract} 
Statistical inference for a linear stochastic hyperbolic equation with two unknown parameters is studied. Based on observation of coordinates of the solution or their linear combination, minimum contrast estimators are introduced. Strong consistency and asymptotic normality is proved. The results are applied to stochastic wave equation perturbed by Brownian noise and they are illustrated by a numerical simulation.
\end{abstract}

\thanks{This paper has been produced with contribution of long term
institutional support of research activities by Faculty of Informatics
and Statistics, University of Economics, Prague.\\
This paper was supported by the GA\v CR Grant no. 15--08819S}

\maketitle

\section{Introduction}
Statistical inference for stochastic partial differential equations driven by standard Brownian motion has been recently extensively studied. This paper presents results, which are interesting mostly for two reasons. In the first place, many authors use maximum likelihood estimator (MLE) for the estimation of unknown parameters in stochastic partial differential equations (SPDEs) (for example \cite{tudor}), however we are interested in minimum contrast estimator (MCE). This type of estimator has been studied since 1980's (see the "pioneering" papers \cite{koskiloges} and \cite{koskiloges 2}), but there are also more recent works. For example in \cite{maslowskitudor} and \cite{maslowskipospisil}, the (MCE) is studied even for the SPDEs driven by fractional Brownian motion.

Secondly, many authors concentrate on stochastic parabolic equations (see \cite{huebner}), but stochastic hyperbolic equations were not paid too much attention to. We may mention \cite{liulototsky} or \cite{liulototsky2}, but, again, only the (MLE) is investigated. Therefore the topic of (MCE) for stochastic hyperbolic equations (such as wave equation or plate equation) is rather new, even if the driving process is "only" standard Brownian motion.

In this work, we study parameter estimation for SPDEs of second order in time, in particular, for the following wave equation with strong damping
\begin{align}
\frac{\partial^2 u}{\partial t^2} (t, \xi) &= b\Delta u(t, \xi) - 2a \frac{\partial u}{\partial t}(t, \xi) + \eta (t, \xi), \quad (t, \xi) \in \mathbb R_+ \times D, \label{example 0} \\
u(0, \xi) &= u_1(\xi), \quad \xi \in D, \notag \\
\frac{\partial u}{\partial t} (0, \xi) &= u_2(\xi), \quad \xi \in D, \notag \\
u(t, \xi) &= 0, \quad (t, \xi) \in \mathbb R_+ \times \partial D, \notag
\end{align}
where $D \subset \mathbb R^d$ is a bounded domain with a smooth boundary and $\eta$ is a random noise.

The aim of the paper is to provide strongly consistent estimators of unknown parameters $a$ and $b$, based on the observation of the trajectory of the solution to \eqref{example 0} up to time $T$. Nevertheless, unlike our earlier work \cite{janak}, where the estimators were dependent on the observation of the norm of the solution, the present estimators will depend only on knowledge of some modes of the solution or their linear combinations (i.e., "observation window"). In order to do so, we follow up the work \cite{janak}, where the form of strongly continuous semigroup $(S(t), t \geq 0)$ generated by the operator in the drift part was found and the form of the covariance operator $Q_{\infty}^{(a,b)}$, the covariance operator of the invariant measure of the system \eqref{example 0}, was computed. Then we will use the ergodicity of the solution and an appropriate ergodic theorem.

The paper is organized as follows. The Section \ref{preliminaries} summarizes some basic facts on stochastic linear partial differential equations (which is mostly due to \cite{dapratozabczyk}) as well as setup and assumptions on the model, together with the form of the covariance operator $Q_{\infty}^{(a,b)}$ (which is acquired from \cite{janak}). In Section \ref{parameter estimation}, the family of strongly consistent estimators $(\bar{a}_T, \bar{b}_T)$ is derived. It is a specification of the general result from \cite{maslowskipospisil} to the present (hyperbolic) case. Moreover, using the "observation windows" of some special forms, even more estimators may be obtained and we present them with certain estimation strategies.

The asymptotic normality of the proposed estimators is proved in Section \ref{section: asymptotic normality}. In the end of this section, we show how these general results may be simplified for even more concrete "observation coordinates". In Section \ref{section: examples} we consider the basic example of wave equation, where our general results may be applied. These results are illustrated by some numerical simulations in Section \ref{section: implementation}.

Let us also introduce some notation. If $U$ and $V$ are Hilbert spaces then $\mathcal L(U, V)$, $\mathcal L_2 (U, V)$ and $\mathcal L_1 (U, V)$ denote the respective spaces of all linear bounded, Hilbert--Schmidt and trace class operators mapping $U$ to $V$. Also, $\mathcal L(V)$ stands for $\mathcal L(V, V)$, etc.

\section{Preliminaries} \label{preliminaries}
Given separable Hilbert spaces $U$ and $V$, we consider the equation
\begin{align}
dX(t) &= \mathcal A X(t) \, dt + \Phi \, dB(t), \label{linearequation} \\
X(0) &= x_0, \notag
\end{align}
where $(B(t), t \geq 0)$ is a standard cylindrical Brownian motion on $U$, $\mathcal A: \text{Dom}(\mathcal A) \rightarrow V, \, \text{Dom}(\mathcal A) \subset V$, $\mathcal A$ is the infinitesimal generator of a strongly continuous semigroup $(S(t), t \geq 0)$ on $V$, $\Phi \in \mathcal L (U,V)$ and $x_0 \in V$ is a random variable. We assume that $\mathbb E \| x_0 \|_V^2 < \infty$ and that $x_0$ and $(B(t), t \geq 0)$ are stochastically independent.

We impose the following two conditions:
\begin{itemize}
\item[(A1)] $\Phi \in \mathcal L_2 (U,V)$,
\item[(A2)] There exist constants $K > 0$ and $\rho > 0$ such that
\begin{equation}
\| S(t) \|_{\mathcal L (V)} \leq Ke^{- \rho t}
\end{equation}
holds for all $t \geq 0$.
\end{itemize}

The condition (A1) means that the perturbing noise is, in fact, a genuine $V$--valued Brownian motion and the condition (A2) is the exponential stability of the semigroup generated by $\mathcal A$.

The following two Propositions describe the form of a mild solution to the equation \eqref{linearequation} and its invariant measure (cf. \cite{dapratozabczyk}).

\begin{proposition}
If (A1) is satisfied then equation \eqref{linearequation} admits a mild solution
\begin{equation}
X^{x_0}(t) = S(t)x_0 + Z(t), \quad t \geq 0,
\end{equation}
where $(Z(t), t \geq 0)$ is the convolution integral
\begin{equation}
Z(t) = \int_0^t S(t-u) \Phi \, dB(u), \quad t \geq 0.
\end{equation}

The process $(Z(t), t \geq 0)$ is a $V$--continuous centered Gaussian process with covariance operator given by the formula
\begin{equation}
Q_t = \int_0^t S(u) \Phi \Phi^* S^* (u) \, du.
\end{equation}
\end{proposition}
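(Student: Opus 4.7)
The proposition is a classical result from Da Prato--Zabczyk theory; my plan is to split it into four items and handle each in turn.

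First I would verify that the stochastic convolution $Z(t)$ is a well-defined $V$-valued random variable. Since $\Phi\in\mathcal L_2(U,V)$ by (A1) and the semigroup $S$ is strongly continuous, the composition $S(t-u)\Phi$ lies in $\mathcal L_2(U,V)$ with
\begin{equation}
\|S(t-u)\Phi\|_{\mathcal L_2(U,V)}\leq \|S(t-u)\|_{\mathcal L(V)}\,\|\Phi\|_{\mathcal L_2(U,V)},
\end{equation}
and the right-hand side is bounded on $[0,t]$ by the uniform boundedness principle applied to $C_0$-semigroups. Hence $\int_0^t\|S(t-u)\Phi\|_{\mathcal L_2(U,V)}^2\,du<\infty$, which is precisely the integrability condition making the Itô integral with respect to the cylindrical Brownian motion $B$ a genuine $V$-valued centered Gaussian random variable.

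Next I would check the mild-solution property. This is really just a definition: rewriting $X^{x_0}(t)=S(t)x_0+Z(t)$ recovers the variation-of-constants formula for \eqref{linearequation}, and the independence of $x_0$ and $B$ guarantees that $S(t)x_0$ contributes the deterministic/initial part while $Z(t)$ carries the noise. Gaussianity and centredness of $Z(t)$ follow from the fact that the Wiener integral of a deterministic $\mathcal L_2$-valued integrand against a cylindrical Brownian motion is a centered Gaussian element of $V$.

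For the covariance formula I would test against arbitrary $h,k\in V$ using the Itô isometry:
\begin{equation}
\mathbb E\langle Z(t),h\rangle_V\langle Z(t),k\rangle_V=\int_0^t\langle \Phi^*S^*(t-u)h,\Phi^*S^*(t-u)k\rangle_U\,du,
\end{equation}
and after the substitution $s=t-u$ this equals $\bigl\langle\bigl(\int_0^t S(s)\Phi\Phi^*S^*(s)\,ds\bigr)h,k\bigr\rangle_V$, which identifies $Q_t$ as stated. The Bochner integral in $\mathcal L_1(V)$ converges because $\operatorname{tr} S(s)\Phi\Phi^*S^*(s)\leq \|S(s)\|_{\mathcal L(V)}^2\|\Phi\|_{\mathcal L_2(U,V)}^2$.

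The main obstacle is the $V$-continuity of the sample paths of $Z$. Mean-square continuity is straightforward from Itô's isometry together with the strong continuity of $S$, but upgrading this to a continuous modification requires more work because the stochastic integrand is operator-valued and a direct Kolmogorov estimate is delicate. Here I would invoke the Da Prato--Kwapień--Zabczyk factorization method: pick $\alpha\in(0,1/2)$, write $Z(t)=\frac{\sin\pi\alpha}{\pi}\int_0^t(t-s)^{\alpha-1}S(t-s)Y(s)\,ds$ where $Y(s)=\int_0^s(s-u)^{-\alpha}S(s-u)\Phi\,dB(u)$, verify that $Y$ has $L^p$-paths for $p$ large enough using assumption (A1) and boundedness of $\|S\|$ on compacts, and conclude $V$-continuity from the continuity of the convolution operator $f\mapsto\int_0^\cdot(\cdot-s)^{\alpha-1}S(\cdot-s)f(s)\,ds$ from $L^p(0,T;V)$ into $C([0,T];V)$. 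All the remaining details are standard and are carried out in \cite{dapratozabczyk}, to which the author already refers.
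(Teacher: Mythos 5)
Your proposal is correct and is precisely the standard Da Prato--Zabczyk argument (Hilbert--Schmidt bound for well-definedness, It\^o isometry for the covariance, and the factorization method for path continuity); the paper itself offers no proof of this proposition and simply cites \cite{dapratozabczyk}, so you have reconstructed exactly the argument the author is relying on. No gaps: the only points needing care --- local boundedness of $\|S(\cdot)\|_{\mathcal L(V)}$ on compacts and the ideal property $\|S\Phi\|_{\mathcal L_2(U,V)}\leq\|S\|_{\mathcal L(V)}\|\Phi\|_{\mathcal L_2(U,V)}$ --- are both handled correctly.
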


\begin{proposition} \label{existence of invariant measure}
If (A1), (A2) are satisfied then there is a unique invariant measure $\mu_{\infty} = N \left( 0, Q_{\infty} \right)$ for the equation \eqref{linearequation} and
$$
w^* - \lim_{t \rightarrow \infty} \mu_t^{x_0} = \mu_{\infty}
$$
for each initial condition $x_0 \in V$, where $\mu_t^{x_0} = \mathrm{Law} \, (X^{x_0}(t))$ and $\mathrm{Law} \, (\cdot)$ denotes the probability distribution.

The covariance operator $Q_{\infty}$ takes the form
\begin{equation} \label{qinfty}
Q_{\infty} = \int_0^{\infty} S(t) \Phi \Phi^* S^* (t) \, dt.
\end{equation}
\end{proposition}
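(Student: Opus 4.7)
The plan is to establish the proposition in four steps, each building on the previous.

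First, I would verify that the candidate operator $Q_\infty$ defined by \eqref{qinfty} is a well-defined trace-class operator on $V$. By (A1), $\Phi \Phi^*$ is trace class on $V$ with $\|\Phi\|_{\mathcal L_2(U,V)}^2 = \mathrm{tr}(\Phi\Phi^*)$. Combining this with the operator norm bound from (A2), one gets $\mathrm{tr}(S(t)\Phi\Phi^* S^*(t)) \leq \|S(t)\|_{\mathcal L(V)}^2 \mathrm{tr}(\Phi\Phi^*) \leq K^2 e^{-2\rho t}\|\Phi\|_{\mathcal L_2(U,V)}^2$, which is integrable in $t$ on $[0,\infty)$. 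Thus the integral in \eqref{qinfty} converges in the trace norm and defines a symmetric, nonnegative trace-class operator.

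Next, I would show that $Z(t)$ converges in $L^2(\Omega; V)$ as $t \to \infty$. By the It\^o isometry and a change of variables $u \mapsto t-u$, the random variable $Z(t)$ has the same distribution as $\widetilde Z(t) := \int_0^t S(u)\Phi\, dB(u)$, and the family $\widetilde Z(t)$ is Cauchy in $L^2(\Omega; V)$ because, for $s < t$, $\mathbb E\|\widetilde Z(t)-\widetilde Z(s)\|_V^2 = \int_s^t \mathrm{tr}(S(u)\Phi\Phi^* S^*(u))\, du \to 0$ by the previous step. Hence $\widetilde Z(t)$ converges to a centered Gaussian limit $Z_\infty$ with covariance $Q_\infty$, and in particular $Z(t) \to Z_\infty$ in distribution. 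Since (A2) also gives $\|S(t) x_0\|_V \leq K e^{-\rho t}\|x_0\|_V \to 0$ almost surely, Slutsky's lemma yields $X^{x_0}(t) \Rightarrow N(0, Q_\infty)$ in $V$, which is the claimed weak-* convergence of $\mu_t^{x_0}$.

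To conclude invariance, I would check directly that $\mu_\infty = N(0, Q_\infty)$ is fixed by the transition semigroup. If $Y \sim N(0, Q_\infty)$ is taken independent of $(B(t))$, then $X^Y(t) = S(t) Y + Z(t)$ is Gaussian centered with covariance $S(t) Q_\infty S^*(t) + Q_t$. Using the semigroup property $S(t+u) = S(t) S(u)$ and splitting the defining integral \eqref{qinfty} at $t$, one obtains the Lyapunov-type identity $S(t) Q_\infty S^*(t) + Q_t = Q_\infty$, so $X^Y(t) \sim \mu_\infty$ for every $t \geq 0$. Uniqueness follows at once from the previous step: any invariant law $\nu$ must equal $\int \mu_t^{x_0}\, d\nu(x_0)$ for all $t$, and passing $t \to \infty$ with weak-* convergence forces $\nu = \mu_\infty$.

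The only genuinely delicate point, and the one I would double-check most carefully, is the Lyapunov identity $S(t) Q_\infty S^*(t) + Q_t = Q_\infty$; everything else is either a direct consequence of (A1)--(A2) or a standard Gaussian/Slutsky argument. The rest of the work is essentially bookkeeping with the It\^o isometry and the exponential stability bound.
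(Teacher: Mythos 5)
Your argument is correct and is precisely the standard proof of this result as given in Da Prato--Zabczyk, which is the source the paper cites in lieu of a proof (trace-norm convergence of \eqref{qinfty}, the time-reversal identification of $\mathrm{Law}(Z(t))$ with $\mathrm{Law}(\widetilde Z(t))$, Slutsky, the identity $S(t)Q_\infty S^*(t)+Q_t=Q_\infty$, and uniqueness by passing to the limit in $\nu=\int\mu_t^{x_0}\,d\nu(x_0)$). The one step you flag as delicate is in fact immediate: $S(t)Q_\infty S^*(t)=\int_0^\infty S(t+u)\Phi\Phi^*S^*(t+u)\,du=\int_t^\infty S(v)\Phi\Phi^*S^*(v)\,dv=Q_\infty-Q_t$ by the semigroup property and a change of variables.
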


To interpret stochastic wave equation \eqref{example 0} rigorously, we rewrite it as a first order system in a standard way. Assume that $\{e_n, n \in \mathbb N \}$ is an orthonormal basis in $L^2(D)$ and the operator $A: \mathrm{Dom}(A) \subset L^2(D) \rightarrow L^2(D)$ is such that
\begin{itemize}
\item[(A3)] $Ae_n = - \alpha_n e_n$, \quad $\forall n \in \mathbb N \quad \alpha_n > 0$, \quad $\alpha_n \rightarrow \infty$ for $n \rightarrow \infty$.
\end{itemize}

These assumptions cover the case when the set $D \subset \mathbb R^d$ is open, bounded and the boundary $\partial D$ is sufficiently smooth, the operator $A = \Delta|_{\mathrm{Dom}(A)}$ and $\mathrm{Dom}(A) = H^2(D) \cap H^1_0 (D)$.

Next let us assume that $\Phi_1$ is a Hilbert--Schmidt operator on $L^2(D)$ such that $Q = \Phi_1 \Phi_1^*$ satisfies
\begin{itemize}
\item[(A4)] $Qe_n = \lambda_n e_n$, \quad $\forall n \in \mathbb N \quad \lambda_n > 0$, \quad $\sum_{n=1}^{\infty} \lambda_n < \infty$.
\end{itemize}

The assumption (A4) means that we consider so--called "diagonal case", i.e., operators $A$ and $Q$ have common set of eigenvectors $\{e_n, n \in \mathbb N \}$.

Consider the Hilbert space $V = \mathrm{Dom}((-A)^{\frac{1}{2}}) \times L^2(D)$ endowed with the inner product
\begin{align}
\left\langle \left( \begin{array}{r}
x_1\\
x_2
\end{array} \right), \left( \begin{array}{r}
y_1\\
y_2
\end{array} \right) \right\rangle_V &= \left\langle x_1, y_1 \right\rangle_{\mathrm{Dom}((-A)^{\frac{1}{2}})} + \left\langle x_2, y_2 \right\rangle_{L^2(D)} \notag \\
&= \left\langle (-A)^{\frac{1}{2}}x_1, (-A)^{\frac{1}{2}}y_1 \right\rangle_{L^2(D)} + \left\langle x_2, y_2 \right\rangle_{L^2(D)},
\end{align}
for any $(x_1, x_2)^{\top}, (y_1, y_2)^{\top} \in V$.

Also, consider the linear equation
\begin{align}
dX(t) &= \mathcal A X(t) \, dt + \Phi \, dB(t), \label{linear equation with parameters} \\
X(0) &= x_0 = \left( \begin{array}{c}
u_1\\
u_2
\end{array} \right), \notag
\end{align}
where the linear operator $\mathcal A: \mathrm{Dom}(\mathcal A) = \mathrm{Dom}(A) \times \mathrm{Dom}((-A)^{\frac{1}{2}}) \rightarrow V$ is defined by
$$
\mathcal Ax =  \mathcal A \left( \begin{array}{r}
x_1\\
x_2
\end{array} \right) = \left( \begin{array}{cc}
0&I\\
bA&-2aI
\end{array} \right) \left( \begin{array}{r}
x_1\\
x_2
\end{array} \right), \quad \forall x = \left( \begin{array}{r}
x_1\\
x_2
\end{array} \right) \in \mathrm{Dom}(\mathcal A),
$$
$a > 0, \, b > 0$ are unknown parameters (which are to be estimated), $u_1 \in \mathrm{Dom}((-A)^{\frac{1}{2}}),$ $u_2 \in L^2(D)$, $x_0 = (u_1, u_2)^{\top} \in V$ satisfies $\mathbb E \| x_0 \|_V^2 < \infty$, where $\| \cdot \|_V := \sqrt{\left\langle \cdot, \cdot \right\rangle_V}$, and the linear operator $\Phi : U = V \rightarrow V$ is defined by
$$
\Phi = \left( \begin{array}{cc}
0&0\\
0&\Phi_1
\end{array} \right).
$$

With no loss of generality, we assume that the driving process in \eqref{linear equation with parameters} takes the form $(0, B(t))^{\top}$, where $(B(t), t \geq 0)$ is a standard cylindrical Brownian motion on $L^2(D)$.

Note that since the operator $\Phi_1$ is Hilbert--Schmidt in $L^2(D)$, the operator $\Phi$ is Hilbert--Schmidt in $V$. Also note that the orthonormal basis of the space $\mathrm{Dom}((-A)^{\frac{1}{2}})$ is $\{ f_n, n \in \mathbb N \}$, where $f_n = \frac{1}{\sqrt{\alpha_n}} e_n$.

The operator $\mathcal A$ is the infinitesimal generator of the strongly continuous semigroup $(S(t), t \geq 0)$ on $V$, which is also exponentially stable (see \cite{janak}, Theorem 3.10). Its exact form is not needed in the sequel, however we will need the formula for the covariance operator $Q_{\infty}^{(a,b)}$.

\begin{theorem} \label{form of Q infinity - theorem}
If (A1) -- (A4) are satisfied then there is a unique invariant measure $\mu_{\infty}^{(a,b)} = N \left( 0, Q_{\infty}^{(a,b)} \right)$ for the equation \eqref{linear equation with parameters} and
$$
w^* - \lim_{t \rightarrow \infty} \mu_t^{x_0} = \mu_{\infty}^{(a,b)}
$$
for each initial condition $x_0 \in V$. The covariance operator $Q_{\infty}^{(a,b)}$ takes the form
\begin{align}
Q_{\infty}^{(a,b)} \left( \begin{array}{c}
x_1\\
x_2
\end{array} \right) &= \sum_{n=1}^{\infty} \sum_{k=1}^{\infty} \frac{\left\langle Qe_n, e_k \right\rangle_{L^2(D)}}{b^2 (\alpha_n - \alpha_k)^2 + 8a^2 b (\alpha_n + \alpha_k)} \times \\
&\left( \begin{array}{c}
4a \alpha_n \left\langle x_1, e_n \right\rangle_{L^2(D)} e_k + b(\alpha_k - \alpha_n) \left\langle x_2, e_n \right\rangle_{L^2(D)} e_k\\
b \alpha_n(\alpha_n - \alpha_k) \left\langle x_1, e_n \right\rangle_{L^2(D)} e_k + 2ab (\alpha_n + \alpha_k) \left\langle x_2, e_n \right\rangle_{L^2(D)} e_k
\end{array} \right), \label{form of Q infinity general case}
\end{align}
for any $(x_1, x_2)^{\top} \in V.$
\end{theorem}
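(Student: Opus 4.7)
The plan is to verify (A1) and (A2) in the present setting so that Proposition~\ref{existence of invariant measure} applies, and then to make the resulting integral formula explicit by exploiting the block structure of $\mathcal A$. For (A1) I would use the $V$-orthonormal basis $\{(f_n, 0)^{\top}, (0, e_n)^{\top}\}_{n \in \mathbb N}$, where $f_n = e_n / \sqrt{\alpha_n}$, and observe that $\Phi$ kills the first family while on the second
\begin{equation*}
\| \Phi \|_{\mathcal L_2(V, V)}^2 = \sum_{n=1}^{\infty} \| (0, \Phi_1 e_n)^{\top} \|_V^2 = \sum_{n=1}^{\infty} \| \Phi_1 e_n \|_{L^2(D)}^2 = \|\Phi_1\|_{\mathcal L_2(L^2(D))}^2 < \infty,
\end{equation*}
so (A1) holds. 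Condition (A2), the exponential stability of $(S(t), t \geq 0)$, is Theorem~3.10 of \cite{janak}. Proposition~\ref{existence of invariant measure} then delivers existence, uniqueness, the weak-star convergence, and the representation $Q_\infty^{(a,b)} = \int_0^\infty S(t) \Phi \Phi^* S^*(t) \, dt$.

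For the explicit form, rather than computing $S(t)$ and integrating, I would use that $Q_\infty^{(a,b)}$ is the unique self-adjoint non-negative solution of the operator Lyapunov equation $\mathcal A Q_\infty^{(a,b)} + Q_\infty^{(a,b)} \mathcal A^* = -\Phi \Phi^*$. The key structural fact is that $\mathcal A$ leaves invariant each two-dimensional subspace $W_n = \mathrm{span}\{(f_n, 0)^{\top}, (0, e_n)^{\top}\}$, in which basis its restriction is
\begin{equation*}
\mathcal A_n = \begin{pmatrix} 0 & \sqrt{\alpha_n} \\ -b \sqrt{\alpha_n} & -2a \end{pmatrix}.
\end{equation*}
Testing the Lyapunov equation pairwise against $W_n$ and $W_k$ gives, for each ordered pair $(n,k)$, a finite-dimensional Sylvester equation
\begin{equation*}
\mathcal A_k Q_{n,k} + Q_{n,k} \mathcal A_n^{*} = -P_{n,k},
\end{equation*}
where $P_{n,k}$ is the $2 \times 2$ matrix whose only non-zero entry is $\langle Q e_n, e_k \rangle_{L^2(D)}$ in position $(2,2)$. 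A direct computation will show that the determinant of the associated $4 \times 4$ linear operator on $2 \times 2$ matrices equals $b^2 (\alpha_n - \alpha_k)^2 + 8 a^2 b (\alpha_n + \alpha_k)$, which is strictly positive under (A3) and $a, b > 0$; hence each $Q_{n,k}$ is uniquely determined. Assembling these matrices into the basis expansion of $Q_\infty^{(a,b)}$ yields the announced double series.

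The main obstacle will be the bookkeeping in the last step: the $V$-inner product used to diagonalize $\mathcal A$ differs from the $L^2(D)$-inner products appearing in the statement by factors of $\sqrt{\alpha_n}$, so care is needed to recover the numerators $4 a \alpha_n$, $b(\alpha_k - \alpha_n)$, $b \alpha_n (\alpha_n - \alpha_k)$, and $2 a b (\alpha_n + \alpha_k)$ precisely as written. Once this is done, the trace-class property of $Q_\infty^{(a,b)}$ and the convergence of the double sum in $\mathcal L_1(V)$ are automatic from the integral representation together with (A1) and (A2).
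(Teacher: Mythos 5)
Your plan is sound and, as far as it goes, correct; note however that the paper itself does not prove this theorem but simply cites Theorem~3.11 of \cite{janak}, where (as the introduction indicates) the formula is obtained by first computing the semigroup $(S(t), t \geq 0)$ explicitly and then evaluating $Q_{\infty}^{(a,b)} = \int_0^{\infty} S(t)\Phi\Phi^*S^*(t)\,dt$ directly. Your route through the Lyapunov equation $\mathcal A Q_{\infty}^{(a,b)} + Q_{\infty}^{(a,b)}\mathcal A^* = -\Phi\Phi^*$ is genuinely different: it trades the explicit (and case-splitting, since the eigenvalues $-a \pm \sqrt{a^2 - b\alpha_n}$ can be real or complex) form of $S(t)$ for a family of $2\times 2$ Sylvester equations, which is cleaner and explains structurally where the denominator comes from. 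Your key claims check out: the matrix $\mathcal A_n$ on $W_n$ in the $V$-orthonormal basis $\{(f_n,0)^{\top},(0,e_n)^{\top}\}$ is as you wrote; the eigenvalues of $\mathcal A_k$ and $-\mathcal A_n^{\top}$ are separated by the imaginary axis, so each Sylvester equation is uniquely solvable; and the product $\prod_{i,j}(\mu_i + \nu_j)$ over the spectra indeed equals $b^2(\alpha_n-\alpha_k)^2 + 8a^2b(\alpha_n+\alpha_k)$, matching the denominator of \eqref{form of Q infinity general case}. In the diagonal block $n=k$ the Sylvester system gives $x_{12}=0$, $x_{22}=\lambda_n/(4a)$, $x_{11}=\lambda_n/(4ab)$, recovering \eqref{form of Q infinity diagonal case}. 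Two points you should make explicit to turn the plan into a proof: first, the Lyapunov equation only holds weakly, i.e.\ $\langle Q_{\infty}^{(a,b)}x, \mathcal A^*y\rangle_V + \langle \mathcal A^*x, Q_{\infty}^{(a,b)}y\rangle_V = -\langle \Phi\Phi^*x,y\rangle_V$ for $x,y \in \mathrm{Dom}(\mathcal A^*)$ (cf.\ \cite{dapratozabczyk}), so you must check that $W_n \subset \mathrm{Dom}(\mathcal A^*)$ and that $\mathcal A^*$ preserves each $W_n$ with matrix $\mathcal A_n^{\top}$ — both hold here because the $W_n$ are mutually $V$-orthogonal, $\mathcal A$-invariant, and contained in $\mathrm{Dom}(\mathcal A)$; second, the actual entries of $Q_{n,k}$ (the numerators $4a\alpha_n$, $b(\alpha_k-\alpha_n)$, $b\alpha_n(\alpha_n-\alpha_k)$, $2ab(\alpha_n+\alpha_k)$, after converting between the $V$- and $L^2(D)$-pairings via the factors $\sqrt{\alpha_n}$) still need to be computed by Cramer's rule or elimination, which you have deferred.
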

\begin{proof}
See \cite{janak}, Theorem 3.11.
\end{proof}

Since we consider only the diagonal case, we will be working with the simplified version of the above formula, that is
\begin{equation} \label{form of Q infinity diagonal case}
Q_{\infty}^{(a,b)} = \left( \begin{array}{cc} \frac{1}{4ab} \, Q & 0 \\ 0 & \frac{1}{4a} \, Q \end{array} \right).
\end{equation}

\section{Parameter estimation} \label{parameter estimation}
Consider the stochastic differential equation \eqref{linear equation with parameters} with the parameters $a > 0$, $b > 0$ unknown. Our aim is to propose strongly consistent estimators of these parameters based on observation of the trajectory through some "observation window" $z \in V$.

More specifically, let $ 0 \neq z \in V$ be arbitrary and consider that we are able to track the trajectory of the process $\left( \left\langle X^{x_0}(t), z \right\rangle_V, 0 \leq t \leq T \right)$ up to time $T$.

Since the linear differential equation \eqref{linear equation with parameters} has unique invariant measure $\mu_{\infty}^{(a,b)}$, we may use the following ergodic theorem for arbitrary solution (see \cite{maslowskipospisil}, Theorem 4.9).

\begin{theorem} \label{ergodic theorem}
Let $(X^{x_0}(t), t \geq 0)$ be a solution to \eqref{linear equation with parameters} with $\Phi \in \mathcal L_2(U,V)$. Let $\varrho : V \rightarrow \mathbb R$ be a functional satisfying the following local Lipschitz condition: let there exist real constants $K > 0$ and $m \geq 0$ such that
\begin{equation}
|\varrho (x) - \varrho (y)| \leq K \| x - y \|_V \left( 1 + \| x \|_V^m + \| y \|_V^m \right)
\end{equation}
for all $x, y \in V$. Then
\begin{equation}
\lim_{T \rightarrow \infty} \frac{1}{T} \int_0^T \varrho \left( X^{x_0}(t) \right) \, dt = \int_V \varrho (y) \, \mu_{\infty} (dy), \quad \mathbb P-a.s.
\end{equation}
for all $x_0 \in V$.
\end{theorem}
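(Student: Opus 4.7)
My plan is to combine a stationary-process ergodic theorem (Birkhoff-Khinchin) with a coupling argument that transfers the conclusion from the stationary initial law to an arbitrary $x_0 \in V$, and finally to control the polynomial growth of $\varrho$ via an $L^1(\mu_\infty)$ moment argument. The first step is to establish ergodicity of the stationary process. Because $X^{x_0}(t) = S(t)x_0 + Z(t)$ is Gaussian with mean $S(t)x_0 \to 0$ and covariance $Q_t \to Q_\infty$, Proposition \ref{existence of invariant measure} shows $\text{Law}(X^{x_0}(t)) \to \mu_\infty$ weakly; standard results for linear SPDEs upgrade this to strong mixing and hence ergodicity of the time-shift on path space under $\P_{\mu_\infty}$. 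The classical Birkhoff-Khinchin theorem then gives, for every $f \in L^1(\mu_\infty)$,
\begin{equation}
\frac{1}{T} \int_0^T f\bigl(X^{\mu_\infty}(t)\bigr) \, dt \longrightarrow \int_V f \, d\mu_\infty \quad \P\text{-a.s.}
\end{equation}
Since $\mu_\infty$ is centered Gaussian on $V$, every polynomial in $\|\cdot\|_V$ is in $L^1(\mu_\infty)$, so the local Lipschitz hypothesis forces $|\varrho(y)| \leq |\varrho(0)| + K(1+\|y\|_V^m)\|y\|_V \in L^1(\mu_\infty)$, and the theorem applies directly when the initial law is $\mu_\infty$.

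Next I would transfer to arbitrary $x_0 \in V$ by a coupling. Constructing $X^{x_0}$ and $X^{\mu_\infty}$ on the same probability space driven by the same cylindrical Brownian motion $B$, linearity of the equation yields $X^{x_0}(t) - X^{\mu_\infty}(t) = S(t)(x_0 - X^{\mu_\infty}(0))$, hence by (A2),
\begin{equation}
\|X^{x_0}(t) - X^{\mu_\infty}(t)\|_V \leq K e^{-\rho t}\|x_0 - X^{\mu_\infty}(0)\|_V \longrightarrow 0 \quad \P\text{-a.s.}
\end{equation}
Combined with the local Lipschitz bound on $\varrho$ this gives
\begin{equation}
|\varrho(X^{x_0}(t))-\varrho(X^{\mu_\infty}(t))| \leq C e^{-\rho t}\bigl(1 + \|X^{x_0}(t)\|_V^m + \|X^{\mu_\infty}(t)\|_V^m\bigr),
\end{equation}
for a random constant $C$. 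If the Cesaro averages of the right-hand side are $o(1)$ a.s., the two time averages share the same limit.

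The main obstacle is obtaining this a.s.\ control on the polynomial terms $\|X^{x_0}(t)\|_V^m$: $L^p$ moment bounds follow routinely from (A2) and the Gaussian nature of $Z(t)$, but do not directly translate into a.s.\ Cesaro statements. The resolution is to apply the stationary Birkhoff theorem already proved in the first step to $g(y) = \|y\|_V^{m+1}\in L^1(\mu_\infty)$, obtaining an a.s.\ finite limit for $\tfrac{1}{T}\int_0^T \|X^{\mu_\infty}(t)\|_V^{m+1}\,dt$, and then transferring to $X^{x_0}$ via the same exponential coupling. Combined with the factor $e^{-\rho t}$, this forces the error term to vanish in Cesaro mean $\P$-a.s., completing the proof.
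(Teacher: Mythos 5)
The paper does not actually prove this statement: it is quoted verbatim from \cite{maslowskipospisil} (Theorem 4.9 there), and the ``proof'' in the text is just that citation. So there is no in-paper argument to match you against; what you have done is reconstruct, essentially correctly, the standard argument that underlies the cited result. Your three ingredients --- Birkhoff--Khinchin for the stationary solution, the pathwise coupling identity $X^{x_0}(t)-X^{\mu_\infty}(t)=S(t)\bigl(x_0-X^{\mu_\infty}(0)\bigr)$ together with (A2), and the polynomial-growth control of $\varrho$ via Gaussian integrability of $\|\cdot\|_V^{m}$ under $\mu_\infty$ --- are exactly the right ones, and the quantitative step works: by H\"older, $\frac{1}{T}\int_0^T e^{-\rho t}\|X(t)\|_V^m\,dt \le \bigl(\frac{1}{T}\int_0^T e^{-\rho(m+1)t}\,dt\bigr)^{\frac{1}{m+1}}\bigl(\frac{1}{T}\int_0^T \|X(t)\|_V^{m+1}\,dt\bigr)^{\frac{m}{m+1}}$, where the first factor is $O(T^{-1/(m+1)})$ and the second is a.s.\ bounded by your Birkhoff step, so the error term indeed vanishes in Ces\`aro mean. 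The one place where you lean on an unproved assertion is the claim that weak convergence of the marginals ``upgrades to strong mixing.'' You do not need mixing at all: Proposition \ref{existence of invariant measure} gives \emph{uniqueness} of the invariant measure, and a unique invariant measure is automatically an extreme point of the (singleton) convex set of invariant measures, hence ergodic for the time shift on path space; Birkhoff then yields a constant limit equal to $\int_V f\,d\mu_\infty$ for every $f\in L^1(\mu_\infty)$. With that substitution your argument is complete and self-contained, which is arguably more informative than the paper's bare reference.
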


Let $z \in V$ be arbitrary. Using a functional $\varrho : V \rightarrow \mathbb R$, $\varrho (y) = \left\langle y,z \right\rangle_V^2$, $y \in V$, all the conditions of Theorem \ref{ergodic theorem} will be satisfied with $m=1$ and
\begin{align}
\lim_{T \rightarrow \infty} \frac{1}{T} \int_0^T \varrho \left( X^{x_0}(t) \right) \, dt &= \lim_{T \rightarrow \infty} \frac{1}{T} \int_0^T \left\langle X^{x_0}(t), z \right\rangle_V^2 \, dt \notag \\
&= \int_V \left\langle y,z \right\rangle_V^2 \, \mu_{\infty}^{(a,b)} (dy) \notag \\
&= \left\langle Q_{\infty}^{(a,b)} z, z \right\rangle_V, \quad \mathbb P-a.s. \label{ergodic property-window}
\end{align}

Based on above convergence, some strongly consistent estimators of parameters $a$ and $b$ may be proposed.

\begin{theorem} \label{abarbbar - general case}
Let $z \in V$ be arbitrary, $z = (z_1, z_2)^{\top}$, where $z_1 \in \mathrm{Dom}((-A)^{\frac{1}{2}})$, $z_2 \in L^2(D)$. Define
\begin{equation}
J_T = \frac{1}{T} \int_0^T \left\langle X^{x_0}(t), z \right\rangle_V^2 \, dt.
\end{equation}

1) If $z \neq 0$ then the process
\begin{equation} \label{abar - general case}
\bar{a}_T = \frac{1}{4b J_T} \left\langle Q z_1, z_1\right\rangle_{\mathrm{Dom}((-A)^{\frac{1}{2}})} + \frac{1}{4 J_T} \left\langle Q z_2, z_2 \right\rangle_{L^2(D)}
\end{equation}
is strongly consistent estimator of the parameter $a$, i.e., $\bar{a}_T \rightarrow a$, $\mathbb P-a.s.$ as $T \rightarrow \infty$.

2) If $z_1 \neq 0$ then the process
\begin{equation} \label{bbar - general case}
\bar{b}_T = \frac{\left\langle Q z_1, z_1\right\rangle_{\mathrm{Dom}((-A)^{\frac{1}{2}})}}{4a J_T - \left\langle Q z_2, z_2 \right\rangle_{L^2(D)}}
\end{equation}
is strongly consistent estimator of the parameter $b$, i.e., $\bar{b}_T \rightarrow b$, $\mathbb P-a.s.$ as $T \rightarrow \infty$.
\end{theorem}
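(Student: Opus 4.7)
The plan is to reduce both statements to the ergodic convergence \eqref{ergodic property-window} that was already noted just above the theorem, and then identify $a$ and $b$ from the explicit diagonal form \eqref{form of Q infinity diagonal case} of $Q_{\infty}^{(a,b)}$ by algebraic inversion. The only genuine work is a continuous-mapping step and the verification that the denominators do not vanish.

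First I would record the exact almost-sure limit of $J_T$. Plugging $z=(z_1,z_2)^{\top}$ and \eqref{form of Q infinity diagonal case} into $\langle Q_{\infty}^{(a,b)} z, z \rangle_V$, the block-diagonal structure yields
\begin{equation}
\bigl\langle Q_{\infty}^{(a,b)} z, z \bigr\rangle_V
= \frac{1}{4ab}\,\langle Q z_1, z_1 \rangle_{\mathrm{Dom}((-A)^{\frac{1}{2}})}
+ \frac{1}{4a}\,\langle Q z_2, z_2 \rangle_{L^2(D)},
\end{equation}
so by \eqref{ergodic property-window} this is the $\mathbb P$-a.s. limit of $J_T$ as $T \to \infty$. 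Since $Q$ is positive definite on the span of $\{e_n\}$ (all $\lambda_n>0$) and the inner product on $\mathrm{Dom}((-A)^{1/2})$ is compatible with the same eigenbasis, both $\langle Q z_1, z_1\rangle_{\mathrm{Dom}((-A)^{1/2})}$ and $\langle Q z_2, z_2\rangle_{L^2(D)}$ are nonnegative and strictly positive as soon as the corresponding component is nonzero.

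For part 1, assuming $z \neq 0$, the limit above is strictly positive, so $J_T$ is eventually bounded away from $0$. Writing $\bar{a}_T$ as a continuous function of $J_T$,
\begin{equation}
\bar{a}_T = \frac{1}{J_T}\left( \frac{1}{4b}\langle Qz_1,z_1\rangle_{\mathrm{Dom}((-A)^{\frac{1}{2}})} + \frac{1}{4}\langle Qz_2,z_2\rangle_{L^2(D)} \right),
\end{equation}
and taking $T\to\infty$ almost surely, the factor in parentheses equals $a$ times the limit of $J_T$, so $\bar{a}_T \to a$ a.s.

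For part 2, assuming $z_1 \neq 0$, one has $\langle Qz_1,z_1\rangle_{\mathrm{Dom}((-A)^{1/2})}>0$. The almost-sure limit of the denominator in \eqref{bbar - general case} is
\begin{equation}
4a \bigl\langle Q_{\infty}^{(a,b)} z, z \bigr\rangle_V - \langle Qz_2,z_2\rangle_{L^2(D)}
= \tfrac{1}{b}\langle Qz_1,z_1\rangle_{\mathrm{Dom}((-A)^{\frac{1}{2}})},
\end{equation}
which is strictly positive, so again one gets $\bar{b}_T \to b$ a.s.\ by continuous mapping. The main (mild) obstacle is just keeping the two different inner products straight and checking that the denominators have strictly positive limits under the stated hypotheses; no further estimates are needed because the ergodic theorem has already done the analytic work.
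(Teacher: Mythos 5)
Your proposal is correct and follows essentially the same route as the paper: compute the $\mathbb P$-a.s.\ limit of $J_T$ from \eqref{ergodic property-window} together with the diagonal form \eqref{form of Q infinity diagonal case}, then invert algebraically, with the hypotheses $z\neq 0$ (resp.\ $z_1\neq 0$) guaranteeing the denominators have strictly positive limits. The only difference is that you spell out the positivity checks that the paper leaves implicit.
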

\begin{proof}
From \eqref{ergodic property-window} and \eqref{form of Q infinity diagonal case} it follows that
\begin{align}
\lim_{T \rightarrow \infty} J_T &= \left\langle Q_{\infty}^{(a,b)} z, z \right\rangle_V = \left\langle \left( \begin{array}{cc} \frac{1}{4ab} \, Q & 0 \\ 0 & \frac{1}{4a} \, Q \end{array} \right) \left( \begin{array}{c} z_1 \\ z_2 \end{array} \right), \left( \begin{array}{c} z_1 \\ z_2 \end{array} \right) \right\rangle_V \notag \\
&= \frac{1}{4ab} \left\langle Q z_1, z_1\right\rangle_{\mathrm{Dom}((-A)^{\frac{1}{2}})} + \frac{1}{4a} \left\langle Q z_2, z_2 \right\rangle_{L^2(D)}, \quad \mathbb P-a.s. \notag
\end{align}
Hence we obtain the desired convergence $\bar{a}_T \rightarrow a$, $\mathbb P-a.s.$ as $T \rightarrow \infty$ unless $z = 0$. Similarly, if $z_1 \neq 0$ then we obtain the convergence $\bar{b}_T \rightarrow b$, $\mathbb P-a.s.$ as $T \rightarrow \infty$.
\end{proof}

The estimators $\bar{a}_T$ and $\bar{b}_T$ have one major disadvantage: In order to compute the estimator $\bar{a}_T$, we have to know the true value of the other parameter $b$ (and vice versa for the estimator $\bar{b}_T$). This problem may be overcome by a more specific choice of the "observation window". Therefore, consider the following special cases or estimation strategies: \\

1. If $z = (0, z_2)^{\top}$, i.e., $z_1 = 0$, $z_2 \neq 0$, then
\begin{equation}
\bar{a}_T = \frac{\left\langle Q z_2, z_2 \right\rangle_{L^2(D)}}{\frac{4}{T} \int_0^T \left\langle X_2^{x_0}(t), z_2 \right\rangle_{L^2(D)}^2 \, dt},
\end{equation}
where $X^{x_0}(t) = (X_1^{x_0}(t), X_2^{x_0}(t))^{\top} \in V$ is the solution to the equation \eqref{linear equation with parameters}. In order to make such an estimator, only the observation of the second component of the solution is needed. \\

2. If $z = (z_1, 0)^{\top}$, i.e., $z_2 = 0$, $z_1 \neq 0$, then we have that
\begin{equation}
\lim_{T \rightarrow \infty} J_T = \lim_{T \rightarrow \infty} \frac{1}{T} \int_0^T \left\langle X_1^{x_0}(t), z_1 \right\rangle_{\mathrm{Dom}((-A)^{\frac{1}{2}})}^2 \, dt = \frac{1}{4ab} \left\langle Q z_1, z_1\right\rangle_{\mathrm{Dom}((-A)^{\frac{1}{2}})}
\end{equation}
and it is possible to estimate either the product $ab$, or one of the parameters if the true value of the other one is known. (In this case the formulae \eqref{abar - general case} and \eqref{bbar - general case} actually coincide.) \\

3. It is possible to combine the two previous strategies together. First, using the "window" $z = (0, z_2)^{\top}$, $z_2 \neq 0$, we get an estimator of $a$, that is
\begin{equation}
\bar{a}_T = \frac{\left\langle Q z_2, z_2 \right\rangle_{L^2(D)}}{\frac{4}{T} \int_0^T \left\langle X_2^{x_0}(t), z_2 \right\rangle_{L^2(D)}^2 \, dt}
\end{equation}
and then, using the "window" $z = (z_1, 0)^{\top}$, $z_1 \neq 0$, we get an estimator of $b$, that is
\begin{equation}
\bar{b}_T = \frac{\left\langle Q z_1, z_1\right\rangle_{\mathrm{Dom}((-A)^{\frac{1}{2}})}}{\left\langle Q z_2, z_2 \right\rangle_{L^2(D)}} \cdot \frac{\frac{1}{T} \int_0^T \left\langle X_2^{x_0}(t), z_2 \right\rangle_{L^2(D)}^2 \, dt}{\frac{1}{T} \int_0^T \left\langle X_1^{x_0}(t), z_1 \right\rangle_{\mathrm{Dom}((-A)^{\frac{1}{2}})}^2 \, dt}.
\end{equation} \\

4. It is also possible to generalize the previous procedure further. First, using the "window" $z = (0, z_2)^{\top}$, $z_2 \neq 0$, we get an estimator of $a$, that is
\begin{equation}
\bar{a}_T = \frac{\left\langle Q z_2, z_2 \right\rangle_{L^2(D)}}{\frac{4}{T} \int_0^T \left\langle X_2^{x_0}(t), z_2 \right\rangle_{L^2(D)}^2 \, dt}
\end{equation}
and then, using any "window" $\tilde{z} = (\tilde{z}_1, \tilde{z}_2)^{\top}$, $\tilde{z}_1 \neq 0$,  we get an estimator of $b$, that is
\begin{align}
\bar{b}_T &= \frac{\left\langle Q \tilde{z}_1, \tilde{z}_1\right\rangle_{\mathrm{Dom}((-A)^{\frac{1}{2}})}}{4 \bar{a}_T \frac{1}{T} \int_0^T \left\langle X^{x_0}(t), \tilde{z} \right\rangle_V^2 \, dt - \left\langle Q \tilde{z}_2, \tilde{z}_2 \right\rangle_{L^2(D)}} \notag \\
&= \frac{\left\langle Q \tilde{z}_1, \tilde{z}_1\right\rangle_{\mathrm{Dom}((-A)^{\frac{1}{2}})}}{\left\langle Q z_2, z_2 \right\rangle_{L^2(D)} \frac{\frac{1}{T} \int_0^T \left\langle X^{x_0}(t), \tilde{z} \right\rangle_V^2 \, dt}{\frac{1}{T} \int_0^T \left\langle X_2^{x_0}(t), z_2 \right\rangle_{L^2(D)}^2 \, dt} - \left\langle Q \tilde{z}_2, \tilde{z}_2 \right\rangle_{L^2(D)}} \notag \\
&= \frac{\left\langle Q \tilde{z}_1, \tilde{z}_1\right\rangle_{\mathrm{Dom}((-A)^{\frac{1}{2}})} \frac{1}{T} \int_0^T \left\langle X_2^{x_0}(t), z_2 \right\rangle_{L^2(D)}^2 \, dt}{\left\langle Q z_2, z_2 \right\rangle_{L^2(D)} \frac{1}{T} \int_0^T \left\langle X^{x_0}(t), \tilde{z} \right\rangle_V^2 \, dt - \left\langle Q \tilde{z}_2, \tilde{z}_2 \right\rangle_{L^2(D)} \frac{1}{T} \int_0^T \left\langle X_2^{x_0}(t), z_2 \right\rangle_{L^2(D)}^2 \, dt}. \notag
\end{align} \\

For the practical reasons there is an incentive to observe the solution to the equation \eqref{linear equation with parameters} through the "observation window" componentwise (i.e., we observe the processes $(\left\langle X_1^{x_0}, z_1 \right\rangle_{\mathrm{Dom}((-A)^{\frac{1}{2}})}, 0 \leq t \leq T)$ and $(\left\langle X_2^{x_0}, z_2 \right\rangle_{L^2(D)}, 0 \leq t \leq T)$ for any given $0 \neq z_1 \in V$, $0 \neq z_2 \in \mathrm{Dom}((-A)^{\frac{1}{2}})$ separately), so we will prefer the strategy 3. Let us introduce these estimators once again with the new notation.

\begin{corollary}
1) Let $0 \neq z_2 \in L^2(D)$ be arbitrary. The process
\begin{equation} \label{abar}
\bar{a}_{T, z_2} = \frac{\left\langle Q z_2, z_2 \right\rangle_{L^2(D)}}{\frac{4}{T} \int_0^T \left\langle X_2^{x_0}(t), z_2 \right\rangle_{L^2(D)}^2 \, dt}
\end{equation}
is strongly consistent estimator of the parameter $a$.

2) Moreover, let $0 \neq z_1 \in \mathrm{Dom}((-A)^{\frac{1}{2}})$ be arbitrary. The process
\begin{equation} \label{bbar}
\bar{b}_{T, {z_1, z_2}} = \frac{\left\langle Q z_1, z_1\right\rangle_{\mathrm{Dom}((-A)^{\frac{1}{2}})}}{\left\langle Q z_2, z_2 \right\rangle_{L^2(D)}} \cdot \frac{\frac{1}{T} \int_0^T \left\langle X_2^{x_0}(t), z_2 \right\rangle_{L^2(D)}^2 \, dt}{\frac{1}{T} \int_0^T \left\langle X_1^{x_0}(t), z_1 \right\rangle_{\mathrm{Dom}((-A)^{\frac{1}{2}})}^2 \, dt}
\end{equation}
is strongly consistent estimator of the parameter $b$.
\end{corollary}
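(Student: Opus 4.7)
The plan is to reduce both parts of the corollary to direct applications of Theorem~\ref{abarbbar - general case}, or equivalently to the ergodic identity \eqref{ergodic property-window}, by making two specific choices of the observation window and then combining the resulting almost-sure convergences on a common probability-one event.

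First, for part 1), I would take $z = (0, z_2)^{\top} \in V$, which is nonzero since $z_2 \neq 0$. With $z_1 = 0$ the first term of \eqref{abar - general case} vanishes and $J_T$ collapses to $\frac{1}{T}\int_0^T \langle X_2^{x_0}(t), z_2\rangle_{L^2(D)}^2 \, dt$, so the remaining expression matches \eqref{abar} verbatim and the consistency $\bar{a}_{T, z_2}\to a$ is immediate from the first part of Theorem~\ref{abarbbar - general case}. The key point, which motivated the whole strategy, is that the $b$-dependent contribution has been annihilated, so this estimator can be evaluated without knowing $b$.

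Next, for part 2), I would invoke the ergodic identity \eqref{ergodic property-window} (together with the diagonal form \eqref{form of Q infinity diagonal case} of $Q_{\infty}^{(a,b)}$) twice: once with $z = (z_1, 0)^{\top}$, giving
\begin{equation}
\lim_{T \to \infty} \frac{1}{T}\int_0^T \langle X_1^{x_0}(t), z_1\rangle_{\mathrm{Dom}((-A)^{1/2})}^2 \, dt = \frac{1}{4ab}\langle Q z_1, z_1\rangle_{\mathrm{Dom}((-A)^{1/2})}, \quad \mathbb P\text{-a.s.},
\end{equation}
and once with $z = (0, z_2)^{\top}$, giving, as in part 1),
\begin{equation}
\lim_{T \to \infty} \frac{1}{T}\int_0^T \langle X_2^{x_0}(t), z_2\rangle_{L^2(D)}^2 \, dt = \frac{1}{4a}\langle Q z_2, z_2\rangle_{L^2(D)}, \quad \mathbb P\text{-a.s.}
\end{equation}
Intersecting the two exceptional null sets, both limits hold on a single probability-one event. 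Dividing the second by the first (the denominator is eventually positive because $z_1 \neq 0$ forces $\langle Q z_1, z_1\rangle_{\mathrm{Dom}((-A)^{1/2})} > 0$ by (A4)) and rescaling by $\langle Q z_1, z_1\rangle_{\mathrm{Dom}((-A)^{1/2})}/\langle Q z_2, z_2\rangle_{L^2(D)}$ reproduces the ratio \eqref{bbar}, whose limit equals $b$. The only bookkeeping point is the passage to joint almost-sure convergence of the two ergodic averages associated with different windows, which is trivial via intersecting two probability-one sets; there is no genuine obstacle, and the corollary is essentially a repackaging of Theorem~\ref{abarbbar - general case} under Strategy~3.
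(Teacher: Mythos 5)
Your proposal is correct and follows the same route as the paper: part 1) is Theorem~\ref{abarbbar - general case} with the window $z=(0,z_2)^{\top}$, and part 2) is exactly the paper's ``strategy 3,'' i.e.\ applying the ergodic identity \eqref{ergodic property-window} with the two windows $(z_1,0)^{\top}$ and $(0,z_2)^{\top}$ and taking the ratio of the two almost-sure limits on the intersection of the two full-measure events. The paper's own proof is just the one-line citation of Theorem~\ref{abarbbar - general case}; your write-up fills in precisely the intended details, including the positivity of $\left\langle Qz_1,z_1\right\rangle_{\mathrm{Dom}((-A)^{\frac{1}{2}})}$ guaranteed by (A4).
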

\begin{proof}
It is the direct consequence of Theorem \ref{abarbbar - general case}.
\end{proof}

The previous estimators may be specified even further if the "observation window" is the element of the orthonormal basis.

Indeed, if $z_2 = e_k \in L^2(D)$ for any $k \in \mathbb N$ then $\bar{a}_{T, z_2}$ takes the form
\begin{equation} \label{abark}
\bar{a}_{T, k} := \bar{a}_{T, e_k} = \frac{\lambda_k}{\frac{4}{T} \int_0^T \left\langle X_2^{x_0}(t), e_k \right\rangle_{L^2(D)}^2 \, dt}.
\end{equation}
Moreover, if $z_1 = f_j \in \mathrm{Dom}((-A)^{\frac{1}{2}})$ for any $j \in \mathbb N$ then $\bar{b}_{T, z_1, z_2}$ takes the form
\begin{equation} \label{bbarjk}
\bar{b}_{T, j, k} := \bar{b}_{T, {f_j, e_k}} = \frac{\lambda_j}{\lambda_k} \cdot \frac{\frac{1}{T} \int_0^T \left\langle X_2^{x_0}(t), e_k \right\rangle_{L^2(D)}^2 \, dt}{\frac{1}{T} \left\langle X_1^{x_0}(t), f_j \right\rangle_{\mathrm{Dom}((-A)^{\frac{1}{2}})}^2 \, dt}.
\end{equation}

These estimators are using only observation of some given modes in the expansion of the solution.

\section{Asymptotic normality of the estimators} \label{section: asymptotic normality}
\subsection{Asymptotic normality of the estimator $\bar{a}_{T, z_2}$}
In this section we show asymp\-to\-tic normality of the estimator \eqref{abar}, i.e., the weak convergence of $\sqrt{T} (\bar{a}_{T, z_2} - a)$ to a Gaussian distribution.

Let $k \in \mathbb N$ be arbitrary and define the operator $E_k : V \rightarrow V$ by
\begin{equation}
E_k x = E_k \left( \begin{array}{c} x_1 \\ x_2 \end{array} \right) = \left( \begin{array}{cc} E_{k,1} & E_{k,2} \\ E_{k,3} & E_{k,4} \end{array} \right) \left( \begin{array}{c} x_1 \\ x_2 \end{array} \right), \quad \forall x = \left( \begin{array}{c} x_1 \\ x_2 \end{array} \right) \in V,
\end{equation}
where $E_{k,2} = 0$, $E_{k,3} = 0$ and
\begin{align}
E_{k,1} : x_1 &\longmapsto b \left\langle x_1, e_k \right\rangle_{L^2(D)} e_k, \notag \\
E_{k,4} : x_2 &\longmapsto \left\langle x_2, e_k \right\rangle_{L^2(D)} e_k, \notag
\end{align}
for any $x_1 \in \mathrm{Dom}((-A)^{\frac{1}{2}})$ and $x_2 \in L^2(D)$. Hence the operator $E_k$ evaluates as
\begin{equation}
E_k x = \left( \begin{array}{c} b \left\langle x_1, e_k \right\rangle_{L^2(D)} e_k \\ \left\langle x_2, e_k \right\rangle_{L^2(D)} e_k \end{array} \right), \quad \forall x = \left( \begin{array}{c} x_1 \\ x_2 \end{array} \right) \in V.
\end{equation}

Let $k, l \in \mathbb N$ be arbitrary and define the operator $E_{k,l} : V \rightarrow V$ by
\begin{equation}
E_{k,l} x = E_{k,l} \left( \begin{array}{c} x_1 \\ x_2 \end{array} \right) = \frac{1}{D_{k,l}} \left( \begin{array}{cc} E_{k,l,1} & E_{k,l,2} \\ E_{k,l,3} & E_{k,l,4} \end{array} \right) \left( \begin{array}{c} x_1 \\ x_2 \end{array} \right), \quad \forall x = \left( \begin{array}{c} x_1 \\ x_2 \end{array} \right) \in V,
\end{equation}
where
\begin{align}
E_{k,l,1} : x_1 &\longmapsto 16 a^2 b \alpha_l \left\langle x_1, e_l \right\rangle_{L^2(D)} e_k + 16 a^2 b \alpha_k \left\langle x_1, e_k \right\rangle_{L^2(D)} e_l, \notag \\
E_{k,l,2} : x_2 &\longmapsto 4ab (\alpha_k - \alpha_l) \left\langle x_2, e_k \right\rangle_{L^2(D)} e_l + 4ab (\alpha_l - \alpha_k) \left\langle x_2, e_l \right\rangle_{L^2(D)} e_k, \notag \\
E_{k,l,3} : x_1 &\longmapsto 4ab \alpha_l (\alpha_k - \alpha_l) \left\langle x_1, e_l \right\rangle_{L^2(D)} e_k + 4ab \alpha_k (\alpha_l - \alpha_k) \left\langle x_1, e_k \right\rangle_{L^2(D)} e_l, \notag \\
E_{k,l,4} : x_2 &\longmapsto 8a^2 (\alpha_k + \alpha_l) \left\langle x_2, e_k \right\rangle_{L^2(D)} e_l + 8a^2 (\alpha_k + \alpha_l) \left\langle x_2, e_l \right\rangle_{L^2(D)} e_k, \notag
\end{align}
for any $x_1 \in \mathrm{Dom}((-A)^{\frac{1}{2}})$, $x_2 \in L^2(D)$ and $D_{k,l}$ defined by
\begin{equation}
D_{k,l} = b(\alpha_k - \alpha_l)^2 + 8a^2 (\alpha_k + \alpha_l).
\end{equation}

Note that $D_{k,l}$ is the denominator from the formula \eqref{form of Q infinity general case} divided by $b$ and that $D_{k,l} = D_{l,k}$. Also note that $D_{k,k} = 16 a^2 \alpha_k$.

The properties of the operators $E_k$ and $E_{k,l}$ are summarized in the following Lemma.

\begin{lemma}
1) The operator $E_k \in \mathcal L(V)$ is self--adjoint for any given $k \in \mathbb N$. Moreover,
\begin{equation} \label{EkxAx}
\left\langle E_k x, \mathcal A x \right\rangle_V = -2a \left\langle x_2, e_k \right\rangle_{L^2(D)}^2, \quad \forall x = \left( \begin{array}{c} x_1 \\ x_2 \end{array} \right) \in \mathrm{Dom}(\mathcal A).
\end{equation}

2) The operator $E_{k,l}\in \mathcal L(V)$ is self--adjoint for any given $k, l \in \mathbb N$. Moreover,
\begin{equation} \label{EklxAx}
\left\langle E_{k,l} x, \mathcal A x \right\rangle_V = -4a \left\langle x_2, e_k \right\rangle_{L^2(D)} \left\langle x_2, e_l \right\rangle_{L^2(D)}, \quad \forall x = \left( \begin{array}{c} x_1 \\ x_2 \end{array} \right) \in \mathrm{Dom}(\mathcal A).
\end{equation}
\end{lemma}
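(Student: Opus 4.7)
The plan is to verify all four claims by direct computation. Boundedness of $E_k$ and $E_{k,l}$ on $V$ is immediate since each block is of rank at most two. The main algebraic tools throughout are the identity $\langle e_k, y\rangle_{\mathrm{Dom}((-A)^{\frac{1}{2}})} = \alpha_k \langle e_k, y\rangle_{L^2(D)}$, which follows from $(-A)^{\frac{1}{2}} e_k = \sqrt{\alpha_k} e_k$ and the definition of the inner product on $V$, together with the self-adjointness of $A$, which yields $\langle Ax_1, e_k\rangle_{L^2(D)} = -\alpha_k \langle x_1, e_k\rangle_{L^2(D)}$.

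For self-adjointness I would split $\langle E_k x, y\rangle_V$ and $\langle E_{k,l} x, y\rangle_V$ into their block contributions. The $E_k$ case is immediate: the two blocks produce $b\alpha_k \langle x_1,e_k\rangle_{L^2(D)}\langle y_1,e_k\rangle_{L^2(D)}$ and $\langle x_2,e_k\rangle_{L^2(D)}\langle y_2,e_k\rangle_{L^2(D)}$, both manifestly symmetric in $x$ and $y$. For $E_{k,l}$ the diagonal blocks $E_{k,l,1}$ and $E_{k,l,4}$ are similarly symmetric, while the off-diagonal blocks require checking the cross-relation $\langle E_{k,l,2} x_2, y_1\rangle_{\mathrm{Dom}((-A)^{\frac{1}{2}})} = \langle x_2, E_{k,l,3} y_1\rangle_{L^2(D)}$; expanding the left-hand side using the $\alpha$-factor from the $\mathrm{Dom}$-inner-product shows that the extra $\alpha_l, \alpha_k$ weights built into $E_{k,l,3}$ are precisely what is needed to match the two sides.

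The identities \eqref{EkxAx} and \eqref{EklxAx} follow after substituting $\mathcal A x = (x_2, bAx_1 - 2ax_2)^\top$ and expanding. For $E_k$, the two blocks contribute $+b\alpha_k \langle x_1,e_k\rangle\langle x_2,e_k\rangle$ and $-b\alpha_k\langle x_1,e_k\rangle\langle x_2,e_k\rangle - 2a\langle x_2,e_k\rangle^2$, and the mixed terms cancel at once. For $E_{k,l}$ the calculation is the same in spirit but considerably longer: the $\langle x_1,\cdot\rangle\langle x_1,\cdot\rangle$ contributions (from $E_{k,l,3}x_1$ paired with $bAx_1$) vanish by the antisymmetry $(\alpha_k-\alpha_l)+(\alpha_l-\alpha_k)=0$; the three families of $\langle x_1,\cdot\rangle\langle x_2,\cdot\rangle$ contributions (from $E_{k,l,1}x_1$ against $x_2$, from $E_{k,l,3}x_1$ against $-2ax_2$, and from $E_{k,l,4}x_2$ against $bAx_1$) cancel in pairs for each of the two basic products $\langle x_1,e_k\rangle\langle x_2,e_l\rangle$ and $\langle x_1,e_l\rangle\langle x_2,e_k\rangle$; and the surviving $\langle x_2,\cdot\rangle\langle x_2,\cdot\rangle$ contributions (from $E_{k,l,2}x_2$ against $x_2$ and $E_{k,l,4}x_2$ against $-2ax_2$) combine to exactly $-4a D_{k,l} \langle x_2,e_k\rangle_{L^2(D)}\langle x_2,e_l\rangle_{L^2(D)}$, giving \eqref{EklxAx} after dividing by $D_{k,l}$.

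The main obstacle is precisely this last bookkeeping step: six separate $\langle x_1,\cdot\rangle\langle x_2,\cdot\rangle$-terms need to cancel in a prescribed pattern, and the coefficients $16a^2b\alpha_l$, $4ab(\alpha_k-\alpha_l)$, $4ab\alpha_l(\alpha_k-\alpha_l)$ and $8a^2(\alpha_k+\alpha_l)$ in $E_{k,l}$ have been engineered precisely so that these cancellations occur and the surviving $\langle x_2,\cdot\rangle\langle x_2,\cdot\rangle$ numerator reconstructs $-4aD_{k,l}$. This structure is forced by the formula \eqref{form of Q infinity general case} for $Q_\infty^{(a,b)}$, so no cleverness is required, only careful attention to signs and to the symmetric roles of the indices $k$ and $l$.
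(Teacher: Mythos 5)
Your proposal is correct and follows essentially the same route as the paper: direct computation of the block pairings using $\langle u, e_k\rangle_{\mathrm{Dom}((-A)^{1/2})} = \alpha_k\langle u, e_k\rangle_{L^2(D)}$ and $\langle Ax_1,e_k\rangle_{L^2(D)} = -\alpha_k\langle x_1,e_k\rangle_{L^2(D)}$, with substitution of $\mathcal A x = (x_2,\, bAx_1-2ax_2)^{\top}$. In fact you supply more detail than the paper for part 2 (which the paper dismisses as ``similar''), and your claimed cancellation pattern checks out: the $\langle x_1,\cdot\rangle\langle x_2,\cdot\rangle$ coefficients sum to zero for both index orderings and the surviving $\langle x_2,\cdot\rangle\langle x_2,\cdot\rangle$ coefficient is $-4ab(\alpha_k-\alpha_l)^2-32a^3(\alpha_k+\alpha_l)=-4aD_{k,l}$.
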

\begin{proof}
1) Let $k \in \mathbb N$ be arbitrary. It is evident that $E_k \in \mathcal L (V)$ and for $x = (x_1, x_2)^{\top} \in V$ and $y = (y_1, y_2)^{\top} \in V$ we have
\begin{align}
\left\langle E_k x, y \right\rangle_V &= \left\langle \left( \begin{array}{c} b \left\langle x_1, e_k \right\rangle_{L^2(D)} e_k \\ \left\langle x_2, e_k \right\rangle_{L^2(D)} e_k \end{array} \right), \left( \begin{array}{c} y_1 \\ y_2 \end{array} \right) \right\rangle_V \notag \\
&= b \left\langle x_1, e_k \right\rangle_{L^2(D)} \left\langle y_1, e_k \right\rangle_{\mathrm{Dom}((-A)^{\frac{1}{2}})} + \left\langle x_2, e_k \right\rangle_{L^2(D)} \left\langle y_2, e_k \right\rangle_{L^2(D)} \notag \\
&= b \alpha_k \left\langle x_1, e_k \right\rangle_{L^2(D)} \left\langle y_1, e_k \right\rangle_{L^2(D)} + \left\langle x_2, e_k \right\rangle_{L^2(D)} \left\langle y_2, e_k \right\rangle_{L^2(D)} \notag \\
&= \left\langle x, E_k y \right\rangle_V, \notag
\end{align}
hence $E_k = E_k^*$. Moreover, for every $x = (x_1, x_2)^{\top} \in \mathrm{Dom}(\mathcal A)$ we have
\begin{align}
\left\langle E_k x, \mathcal A x \right\rangle_V &= \left\langle \left( \begin{array}{c} b \left\langle x_1, e_k \right\rangle_{L^2(D)} e_k \\ \left\langle x_2, e_k \right\rangle_{L^2(D)} e_k \end{array} \right), \left( \begin{array}{c} x_2 \\ bAx_1 - 2ax_2 \end{array} \right) \right\rangle_V \notag \\
&= b \left\langle x_1, e_k \right\rangle_{L^2(D)} \left\langle (-A)^{\frac{1}{2}} x_2, (-A)^{\frac{1}{2}} e_k \right\rangle_{L^2(D)} \notag \\
&\phantom{=}+ b \left\langle x_2, e_k \right\rangle_{L^2(D)} \left\langle A x_1, e_k \right\rangle_{L^2(D)} - 2a \left\langle x_2, e_k \right\rangle_{L^2(D)}^2 \notag \\
&= -2a \left\langle x_2, e_k \right\rangle_{L^2(D)}^2. \notag
\end{align}

2) Let $k,l \in \mathbb N$ be arbitrary. It is evident that $E_{k,l} \in \mathcal L (V)$ and similarly as above it is possible to verify that $E_{k,l} = E_{k,l}^*$ and that \eqref{EklxAx} holds true for any $x \in \mathrm{Dom}(\mathcal A)$.
\end{proof}

Choose $0 \neq z_2 \in L^2(D)$ taking the form
\begin{equation}
z_2 = \sum_{k=1}^{\infty} \left\langle z_2, e_k \right\rangle_{L^2(D)} e_k = \sum_{k=1}^{\infty} z_{2,k} e_k.
\end{equation}

Finally, define the operator $E : V \rightarrow V$ by
\begin{equation} \label{definition of E}
E = \sum_{k=1}^{\infty} z_{2,k}^2 E_k + \sum_{k=1}^{\infty} \sum_{l=k+1}^{\infty} z_{2,k} z_{2,l} E_{k,l}.
\end{equation}

The properties of the operator $E$ needed in the sequel are summarized in the following Lemma.

\begin{lemma} \label{properties of E}
The operator $E \in \mathcal L(V)$. Moreover, it is self--adjoint and
\begin{equation} \label{ExAx}
\left\langle E x, \mathcal A x \right\rangle_V = -2a \left\langle x_2, z_2 \right\rangle_{L^2(D)}^2, \quad \forall x = \left( \begin{array}{c} x_1 \\ x_2 \end{array} \right) \in \mathrm{Dom}(\mathcal A).
\end{equation}
\end{lemma}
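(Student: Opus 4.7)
The three assertions to prove are $E \in \mathcal L(V)$, $E = E^*$, and the identity \eqref{ExAx}, and I would tackle them in that order.

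For boundedness I would view $E$ through its sesquilinear form and bound the partial sums in operator norm. The diagonal contribution is direct: expanding in the orthonormal bases $\{f_n\}$ of $\mathrm{Dom}((-A)^{1/2})$ and $\{e_n\}$ of $L^2(D)$ gives
\begin{equation*}
\|E_k x\|_V^2 = b^2 \alpha_k \langle x_1, e_k\rangle_{L^2(D)}^2 + \langle x_2, e_k\rangle_{L^2(D)}^2,
\end{equation*}
so $\|E_k\|_{\mathcal L(V)} \leq \max(b,1)$ uniformly in $k$; since $\{z_{2,k}^2\} \in \ell^1$, a tail-of-Parseval argument shows the partial sums of $\sum_k z_{2,k}^2 E_k x$ are Cauchy in $V$ and the limit operator is bounded. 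The off-diagonal contribution is subtler and requires the full lower bound $D_{k,l} \geq b(\alpha_k - \alpha_l)^2 + 8a^2(\alpha_k + \alpha_l)$: each of the four blocks of $E_{k,l}/D_{k,l}$ carries a factor $\alpha_k$, $\alpha_l$, or $(\alpha_k - \alpha_l)$, and one extracts $O(1)$ control only by pairing the right factor with the right piece of the denominator. Proceeding mechanically yields an estimate of the form
\begin{equation*}
\|E_{k,l} x\|_V^2 \leq C(a,b)\bigl(\alpha_k \langle x_1, e_k\rangle^2 + \alpha_l \langle x_1, e_l\rangle^2 + \langle x_2, e_k\rangle^2 + \langle x_2, e_l\rangle^2\bigr)
\end{equation*}
(all inner products in $L^2(D)$), and absolute convergence of the double series then follows by Cauchy--Schwarz together with $\sum_{k,l} z_{2,k}^2 z_{2,l}^2 = \|z_2\|_{L^2(D)}^4 < \infty$ and Parseval. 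Self-adjointness is then essentially automatic: by the preceding lemma $E_k = E_k^*$ and $E_{k,l} = E_{k,l}^*$, and the scalar coefficients $z_{2,k}^2$, $z_{2,k} z_{2,l}$ are real, so every partial sum is self-adjoint; self-adjointness passes to the norm limit, giving $E = E^*$.

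For the identity, fix $x \in \mathrm{Dom}(\mathcal A)$, so that $\mathcal A x$ is a fixed vector in $V$. Continuity of $\langle \cdot, \mathcal A x\rangle_V$ in its first argument lets me pass the inner product through the series to obtain
\begin{equation*}
\langle Ex, \mathcal A x\rangle_V = \sum_k z_{2,k}^2 \langle E_k x, \mathcal A x\rangle_V + \sum_{k<l} z_{2,k} z_{2,l} \langle E_{k,l} x, \mathcal A x\rangle_V.
\end{equation*}
Substituting \eqref{EkxAx} and \eqref{EklxAx} produces
\begin{equation*}
-2a \left( \sum_k z_{2,k}^2 \langle x_2, e_k\rangle_{L^2(D)}^2 + 2 \sum_{k<l} z_{2,k} z_{2,l} \langle x_2, e_k\rangle_{L^2(D)} \langle x_2, e_l\rangle_{L^2(D)} \right),
\end{equation*}
which is $-2a$ times the expansion of $\bigl(\sum_k z_{2,k} \langle x_2, e_k\rangle_{L^2(D)}\bigr)^2$ via $(\sum_k b_k)^2 = \sum_k b_k^2 + 2\sum_{k<l} b_k b_l$. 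By Parseval the inner sum equals $\langle x_2, z_2\rangle_{L^2(D)}$, so the expression simplifies to $-2a \langle x_2, z_2\rangle_{L^2(D)}^2$, as claimed.

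The main obstacle will be the off-diagonal part of the boundedness argument. The matrix entries of $E_{k,l}$ carry factors $\alpha_k, \alpha_l$ that diverge under (A3), so producing a uniformly controlled sesquilinear estimate requires exploiting \emph{both} positive pieces of $D_{k,l}$ simultaneously; this is the only genuinely delicate computation in the proof, while the remaining steps reduce to bookkeeping on top of the preceding lemma together with continuity of the inner product.
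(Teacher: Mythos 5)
Your handling of the diagonal sum, of self--adjointness, and of the identity \eqref{ExAx} is correct and coincides with the paper's argument. The gap is in the boundedness of the off--diagonal part. Write $\epsilon_k(x) := \alpha_k \left\langle x_1, e_k\right\rangle_{L^2(D)}^2 + \left\langle x_2, e_k\right\rangle_{L^2(D)}^2$, so that your claimed intermediate estimate reads $\| E_{k,l}x \|_V^2 \leq C\left( \epsilon_k(x) + \epsilon_l(x) \right)$. That estimate is true, but the deduction you base on it does not close: in your Cauchy--Schwarz step the factor $\left( \sum_{k<l} z_{2,k}^2 z_{2,l}^2 \right)^{1/2}$ has to be paired with $\left( \sum_{k<l} \| E_{k,l}x \|_V^2 \right)^{1/2}$, and the latter is infinite for every $x \neq 0$, since for fixed $k$ the contribution $\epsilon_k(x)$ is repeated over all $l > k$. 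Moreover, the conclusion you aim for --- absolute convergence of the double series in $V$ --- is generally false: $(z_{2,k})_k$ is only square--summable, and taking $x = (0, e_m)^{\top}$ one finds $\|E_{m,l}x\|_V \sim 4a\,\alpha_l^{-1/2}$ as $l \rightarrow \infty$, so $\sum_{k<l} |z_{2,k} z_{2,l}| \, \|E_{k,l}x\|_V$ can diverge (e.g.\ when $\sum_l \alpha_l^{-1} = \infty$). The series converges only after the orthogonality of its terms is exploited, not by summing norms.

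The repair is to record the finer structure of $E_{k,l}$: every matrix entry sends mode $l$ of $x$ to mode $k$ of the output or vice versa, so the three ratio bounds on $\frac{\alpha_k+\alpha_l}{D_{k,l}}$, $\frac{\sqrt{\alpha_k}|\alpha_k-\alpha_l|}{D_{k,l}}$ and $\frac{\sqrt{\alpha_k\alpha_l}}{D_{k,l}}$ yield
$\left| \left\langle E_{k,l}x, y \right\rangle_V \right| \leq C \left( \epsilon_l(x)^{1/2} \epsilon_k(y)^{1/2} + \epsilon_k(x)^{1/2} \epsilon_l(y)^{1/2} \right)$
with \emph{no same--index products}. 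Consequently
\begin{equation}
\sum_{k \neq l} |z_{2,k} z_{2,l}| \left| \left\langle E_{k,l}x, y \right\rangle_V \right| \leq 2C \Bigl( \sum_{k} |z_{2,k}| \, \epsilon_k(x)^{1/2} \Bigr) \Bigl( \sum_{l} |z_{2,l}| \, \epsilon_l(y)^{1/2} \Bigr) \leq 2C \| z_2 \|_{L^2(D)}^2 \| x \|_V \| y \|_V
\end{equation}
by one Cauchy--Schwarz in each index separately, which bounds the sesquilinear form and hence gives $E \in \mathcal L(V)$. Equivalently: collect all contributions landing on a fixed output mode $e_k$, apply Cauchy--Schwarz over the inner index $l$, and only then apply Parseval over $k$. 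This coefficient--first organization is exactly what the three displayed inequalities in the paper's proof, together with the summability of $\sum_k z_{2,k}^2$, $\sum_k \left\langle x_2, e_k\right\rangle_{L^2(D)}^2$ and $\sum_k \alpha_k \left\langle x_1, e_k\right\rangle_{L^2(D)}^2$, are set up to deliver.
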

\begin{proof}
There exists a positive constant $C > 0$ (which does not depend on $k$) such that $\| E_k \|_{\mathcal L(V)} < C$ for any $k \in \mathbb N$. Hence
\begin{equation}
\left\| \sum_{k=1}^{\infty} z_{2,k}^2 E_k \right\|_{\mathcal L(V)} \leq \sum_{k=1}^{\infty} z_{2,k}^2 \| E_k \|_{\mathcal L(V)} \leq C \sum_{k=1}^{\infty} z_{2,k}^2 = C \| z_2 \|_{L^2(D)}^2 < \infty
\end{equation}
and the operator defined by the first sum in \eqref{definition of E} belongs to the space $\mathcal L(V)$.

The convergence of the double series is fulfilled by the denominator $D_{k,l}$. For any $k,l \in \mathbb N$ we have
\begin{equation}
\frac{\alpha_k + \alpha_l}{D_{k,l}} \leq \frac{\alpha_k + \alpha_l}{8a^2(\alpha_k + \alpha_l)} = \frac{1}{8a},
\end{equation}
\begin{align}
\frac{\sqrt{\alpha_k} |\alpha_k - \alpha_l|}{D_{k,l}} &= \sqrt{\frac{\alpha_k}{D_{k,l}}} \frac{|\alpha_k - \alpha_l|}{\sqrt{D_{k,l}}} < \frac{1}{\sqrt{8a^2}} \frac{|\alpha_k - \alpha_l|}{\sqrt{b(\alpha_k - \alpha_l)^2 + 8a^2 (\alpha_k + \alpha_l)}} \notag \\
&< \frac{1}{\sqrt{8a^2}} \frac{|\alpha_k - \alpha_l|}{\sqrt{b} |\alpha_k - \alpha_l|} = \frac{1}{8a^2 b}, \notag
\end{align}
\begin{equation}
\frac{\sqrt{\alpha_k \alpha_l}}{D_{k,l}} < \frac{2 \sqrt{\alpha_k \alpha_l}}{D_{k,l}} \leq \frac{2 \sqrt{\alpha_k \alpha_l}}{8a^2 (\alpha_k + \alpha_l)} \leq \frac{1}{8a^2}.
\end{equation}

The desired convergence is then accomplished by the convergence of the series
\begin{equation}
\sum_{k=1}^{\infty} z_{2,k}^2 < \infty, \quad \sum_{k=1}^{\infty} \left\langle x_2, e_k \right\rangle_{L^2(D)}^2 < \infty, \quad \sum_{k=1}^{\infty} \alpha_k \left\langle x_1, e_k \right\rangle_{L^2(D)}^2 < \infty.
\end{equation}

The linear combination of the self--adjoint operators is also the self--adjoint ope\-ra\-tor, hence $E = E^*$.

The property \eqref{ExAx} follows by \eqref{EkxAx}, \eqref{EklxAx} and the following computation
\begin{align}
&\left\langle E x, \mathcal A x \right\rangle_V = \notag \\
&= \left\langle \left( \sum_{k=1}^{\infty} z_{2,k}^2 E_k + \sum_{k=1}^{\infty} \sum_{l=k+1}^{\infty} z_{2,k} z_{2,l} E_{k,l} \right) x, \mathcal A x \right\rangle_V \notag \\
&= \sum_{k=1}^{\infty} z_{2,k}^2 \left\langle E_k x, \mathcal A x \right\rangle_V + \sum_{k=1}^{\infty} \sum_{l=k+1}^{\infty} z_{2,k} z_{2,l} \left\langle E_{k,l} x, \mathcal A x \right\rangle_V \notag \\
&= -2a \left( \sum_{k=1}^{\infty} z_{2,k}^2 \left\langle x_2, e_k \right\rangle_{L^2(D)}^2 + 2 \sum_{k=1}^{\infty} \sum_{l=k+1}^{\infty} z_{2,k} z_{2,l} \left\langle x_2, e_k \right\rangle_{L^2(D)} \left\langle x_2, e_l \right\rangle_{L^2(D)} \right) \notag \\
&= -2a \left( \sum_{k=1}^{\infty} z_{2,k} \left\langle x_2, e_k \right\rangle_{L^2(D)} \right)^2 \notag \\
&= -2a \left\langle x_2, \sum_{k=1}^{\infty} z_{2,k} e_k \right\rangle_{L^2(D)}^2 \notag \\
&= -2a \left\langle x_2, z_2 \right\rangle_{L^2(D)}^2. \notag
\end{align}
\end{proof}

We will need an alternative representation for the process \\ $\frac{1}{T} \int_0^T \left\langle X_2^{x_0}(t), z_2 \right\rangle_{L^2(D)}^2 \, dt$.

\begin{lemma} \label{ito for z2-lemma}
The process $\frac{1}{T} \int_0^T \left\langle X_2^{x_0}(t), z_2 \right\rangle_{L^2(D)}^2 \, dt$ admits the following representation
\begin{align}
\frac{1}{T} \int_0^T \left\langle X_2^{x_0}(t), z_2 \right\rangle_{L^2(D)}^2 \, dt &= - \frac{1}{4aT} \left( \left\langle E X^{x_0}(T), X^{x_0}(T) \right\rangle_V - \left\langle E x_0, x_0 \right\rangle_V \right) \notag \\
&\phantom{=} + \frac{1}{2aT} \int_0^T \left\langle E X^{x_0}(t), \Phi \, dB(t) \right\rangle_V + \frac{1}{4a} \left\langle Qz_2, z_2 \right\rangle_{L^2(D)}. \label{ito for z2}
\end{align}
\end{lemma}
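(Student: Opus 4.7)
\emph{Plan.} The natural strategy is to apply the infinite-dimensional It\^o formula to the scalar process $t \mapsto \left\langle E X^{x_0}(t), X^{x_0}(t) \right\rangle_V$ and rearrange. Since $E \in \mathcal L(V)$ is self-adjoint by the preceding lemma, the function $F(x) = \left\langle Ex, x \right\rangle_V$ has first derivative $DF(x) = 2Ex$ and second derivative $D^2 F(x) = 2E$. Applying It\^o to the solution of \eqref{linear equation with parameters} therefore yields
\begin{align}
d \left\langle E X^{x_0}(t), X^{x_0}(t) \right\rangle_V &= 2 \left\langle E X^{x_0}(t), \mathcal A X^{x_0}(t) \right\rangle_V \, dt \notag \\
&\phantom{=} + \mathrm{tr}(E \Phi \Phi^*) \, dt + 2 \left\langle E X^{x_0}(t), \Phi \, dB(t) \right\rangle_V. \notag
\end{align}
The drift is then converted using \eqref{ExAx} from Lemma \ref{properties of E}: $2 \left\langle E X, \mathcal A X \right\rangle_V = -4a \left\langle X_2, z_2 \right\rangle_{L^2(D)}^2$.

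The next step is to identify the It\^o correction $\mathrm{tr}(E \Phi \Phi^*)$. Using the orthonormal basis $\{(f_n, 0), (0, e_n) : n \in \mathbb N\}$ of $V$, note that $\Phi \Phi^* = \mathrm{diag}(0, Q)$ annihilates the vectors $(f_n, 0)$, so only the vectors $(0, e_n)$ contribute. For these, $\Phi \Phi^* (0, e_n) = (0, \lambda_n e_n)$, and a direct inspection of the defining formulas for $E_k$ and $E_{k,l}$ shows that $\left\langle E_k (0, e_n), (0, e_n) \right\rangle_V = \delta_{kn}$ while the contribution of each $E_{k,l}$ with $k < l$ vanishes (the factor $\delta_{kn} \delta_{ln}$ appearing would force $k = l = n$, which is excluded). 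Consequently $\left\langle E (0, e_n), (0, e_n) \right\rangle_V = z_{2,n}^2$, and summing gives
\begin{equation}
\mathrm{tr}(E \Phi \Phi^*) = \sum_{n=1}^{\infty} \lambda_n z_{2,n}^2 = \left\langle Q z_2, z_2 \right\rangle_{L^2(D)}. \notag
\end{equation}

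Integrating the It\^o expansion from $0$ to $T$, dividing by $4aT$, and rearranging then yields exactly \eqref{ito for z2}. The main technical obstacle is the rigorous application of It\^o's formula: the mild solution $X^{x_0}(t)$ does not in general lie in $\mathrm{Dom}(\mathcal A)$, so the pointwise pairing $\left\langle E X^{x_0}(t), \mathcal A X^{x_0}(t) \right\rangle_V$ is only formal. The standard way around this is to approximate $X^{x_0}$ by its Yosida regularizations $X_n^{x_0} = n(n - \mathcal A)^{-1} X^{x_0}$ (or equivalently to work with strong solutions starting from $x_0 \in \mathrm{Dom}(\mathcal A)$), apply It\^o in the regular setting where everything is well-defined, and pass to the limit using the boundedness of $E$ and the It\^o isometry for the martingale term; self-adjointness of $E$ together with \eqref{ExAx} then transfers to the limiting identity.
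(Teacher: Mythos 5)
Your proposal is correct and follows essentially the same route as the paper: apply It\^o's formula to the quadratic functional $x \mapsto \left\langle Ex, x \right\rangle_V$, convert the drift via \eqref{ExAx}, identify $\mathrm{Tr}(E\Phi\Phi^*) = \left\langle Qz_2, z_2 \right\rangle_{L^2(D)}$ (your basis computation agrees with the paper's computation of $\mathrm{Tr}(E_4 Q)$ via the partial traces $\mathrm{Tr}(E_{k,4}Q)=\lambda_k$, $\mathrm{Tr}(E_{k,l,4}Q)=0$), and rearrange. The only difference is the regularization used to justify It\^o's formula for the mild solution — you propose Yosida approximants, while the paper projects onto finite-dimensional subspaces spanned by a basis of $V$ contained in $\mathrm{Dom}(\mathcal A)$ and passes to the limit in $L^2(\Omega)$; both are standard and equally valid here.
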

\begin{proof}
Define the function $g : V \rightarrow \mathbb R$ by
\begin{equation}
g(x) = \left\langle Ex, x \right\rangle_V, \quad \forall x \in V.
\end{equation}

The It\^ o's formula (see e.g. \cite{dapratozabczyk}, Theorem 4.17.) is not applicable to the process $g(X^{x_0}(t))$ directly, because $(X^{x_0}(t), t \geq 0)$ 
is not a strong solution to the equation \eqref{linear equation with parameters}. We apply it to suitable finite--dimensional projections.

Let $\{ h_n, n \in \mathbb N \}$ be an orthonormal basis in $V$ consisting of elements from $\mathrm{Dom}(\mathcal A)$ and let $P_N$ be the operator of projection on the $\mathrm{span} \, \{ h_n, n = 1, \ldots N \}$, that is
$$
P_N x = \sum_{n=1}^N \left\langle x, h_n \right\rangle_V h_n, \quad \forall x \in V, \quad \forall N \in \mathbb N.
$$

Choose $N \in \mathbb N$ and set
$$
X^{x_0, N}(t) := P_N X^{x_0}(t), \quad \forall t \geq 0.
$$

The expansion for the $X^{x_0, N}(t)$ is finite, so $X_1^{x_0, N}(t) \in \text{Dom}(A)$, $X_2^{x_0, N}(t) \in \text{Dom}((-A)^{\frac{1}{2}})$ and consequently $X^{x_0, N}(t) \in \mathrm{Dom}(\mathcal A)$ for all $t \geq 0$. Now we may apply It\^ o's formula to the function $g(X^{x_0, N}(t))$, which yields
\begin{equation} \label{ito for z2-helpful}
dg(X^{x_0, N}(t)) = 2 \left\langle EX^{x_0, N}(t), dX^{x_0, N}(t) \right\rangle_V + \frac{1}{2} \, \mathrm{Tr} \left( 2E \Phi \Phi^* \right) \, dt,
\end{equation}
where $\mathrm{Tr} \, (\cdot)$ denotes the trace of the (nuclear) operator.

First, we simplify the second term by the following calculation
\begin{align}
\mathrm{Tr} \, (E \Phi \Phi^*) &= \mathrm{Tr} \, \left( \left( \begin{array}{cc} E_1 & E_2 \\ E_3 & E_4 \end{array} \right) \left( \begin{array}{cc} 0 & 0 \\ 0 & Q \end{array} \right) \right) = \mathrm{Tr} \, \left( \begin{array}{cc} 0 & E_2 Q \\ 0 & E_4 Q \end{array} \right) = \mathrm{Tr} \, (E_4 Q) \notag \\
&= \mathrm{Tr} \, \left( \sum_{k=1}^{\infty} z_{2,k}^2 E_{k,4} Q + \sum_{k=1}^{\infty} \sum_{l=k+1}^{\infty} \frac{z_{2,k} z_{2,l}}{D_{k,l}} E_{k,l,4} Q \right) \notag \\
&= \sum_{k=1}^{\infty} z_{2,k}^2 \mathrm{Tr} \, (E_{k,4} Q) + \sum_{k=1}^{\infty} \sum_{l=k+1}^{\infty} \frac{z_{2,k} z_{2,l}}{D_{k,l}} \mathrm{Tr} \, (E_{k,l,4} Q). \notag
\end{align}

Now we compute the partial traces $\mathrm{Tr} \, (E_{k,4} Q)$ and $\mathrm{Tr} \, (E_{k,l,4} Q)$. According to the definition of the trace
\begin{align}
\mathrm{Tr} \, (E_{k,4} Q) &= \sum_{j=1}^{\infty} \left\langle E_{k,4} Q e_j, e_j \right\rangle_{L^2(D)} = \sum_{j=1}^{\infty} \lambda_j \left\langle E_{k,4} e_j, e_j \right\rangle_{L^2(D)} \notag \\
&= \sum_{j=1}^{\infty} \lambda_j \left\langle e_j, e_k \right\rangle_{L^2(D)}^2 = \sum_{j=1}^{\infty} \lambda_j \delta_{j,k} = \lambda_k, \notag
\end{align}
where $\delta$ stands for the Kronecker's delta. Similarly, we have
\begin{align}
\mathrm{Tr} \, (E_{k,l,4} Q) &= \sum_{j=1}^{\infty} \left\langle E_{k,l,4} Q e_j, e_j \right\rangle_{L^2(D)} \notag \\
&= \sum_{j=1}^{\infty} \lambda_j \left( 8a^2 (\alpha_k + \alpha_l) \left\langle e_j, e_k \right\rangle_{L^2(D)} \left\langle e_l, e_j \right\rangle_{L^2(D)} \right. \notag \\
&\phantom{=} \left. + 8a^2 (\alpha_k + \alpha_l) \left\langle e_j, e_l \right\rangle_{L^2(D)} \left\langle e_k, e_j \right\rangle_{L^2(D)} \right) \notag \\
&= \sum_{j=1}^{\infty} 16a^2 \lambda_j (\alpha_k + \alpha_l) \delta_{j,k} \delta_{j,l} = 0, \notag
\end{align}
since $k \neq l$ in \eqref{definition of E}. Therefore
\begin{equation}
\mathrm{Tr} \, (E \Phi \Phi^*) = \sum_{k=1}^{\infty} \lambda_k z_{2,k}^2 = \left\langle Q z_2, z_2 \right\rangle_{L^2(D)}.
\end{equation}

Using this formulae and Lemma \ref{properties of E}, the expression \eqref{ito for z2-helpful} implies
\begin{align}
dg(X^{x_0, N}(t)) &= 2 \left\langle E X^{x_0, N}(t), \mathcal A X^{x_0, N}(t) \right\rangle_V \, dt + 2 \left\langle E X^{x_0, N}(t), \Phi \, dB(t) \right\rangle_V \notag \\
&\phantom{=}+ \left\langle Q z_2, z_2 \right\rangle_{L^2(D)} \, dt \notag \\
&= -4a \left\langle X_2^{x_0, N}(t), z_2 \right\rangle_{L^2(D)}^2 \, dt + 2 \left\langle E X^{x_0, N}(t), \Phi \, dB(t) \right\rangle_V \notag \\
&\phantom{=} + \left\langle Q z_2, z_2 \right\rangle_{L^2(D)} \, dt. \notag
\end{align}

Integrating previous formula over the interval $(0,T)$, we arrive at
\begin{align}
&\frac{1}{T} \int_0^T \left\langle X_2^{x_0, N}(t), z_2 \right\rangle_{L^2(D)}^2 \, dt = \notag \\
&= - \frac{1}{4aT} \left( \left\langle E X^{x_0, N}(T), X^{x_0, N}(T) \right\rangle_V - \left\langle E x_0^N, x_0^N \right\rangle_V \right) \notag \\
&\phantom{=}+ \frac{1}{2aT} \int_0^T \left\langle E X^{x_0, N}(t), \Phi \, dB(t) \right\rangle_V + \frac{1}{4a} \left\langle Qz_2, z_2 \right\rangle_{L^2(D)}. \label{ito for z2N}
\end{align}

Since
\begin{equation}
\left| \left\langle X_2^{x_0, N}(t), z_2 \right\rangle_{L^2(D)} \right| \leq \| X_2^{x_0}(t) \|_{L^2(D)} \, \| z_2 \|_{L^2(D)}, \quad \forall t \geq 0, \quad \forall N \in \mathbb N,
\end{equation}
we may use the random variable $\| X_2^{x_0}(t) \|_{L^2(D)}^2$ as an integrable majorant for the integral on the left--hand side. Also,
\begin{equation}
\int_0^T \left\langle E X^{x_0, N}(t), \Phi \, dB(t) \right\rangle_V \rightarrow \int_0^T \left\langle E X^{x_0}(t), \Phi \, dB(t) \right\rangle_V, \quad N \rightarrow \infty \, \, \mathrm{in} \, \, L^2(\Omega),
\end{equation}
because, for some positive constant $C >0$, we have that
\begin{equation}
\mathbb E \left| \int_0^T \left\langle E \left( X^{x_0, N}(t) - X^{x_0}(t) \right), \Phi \, dB(t) \right\rangle_V \right|^2 \leq C \int_0^T \mathbb E \left\| X^{x_0, N}(t) - X^{x_0}(t) \right\|_V^2 \, dt,
\end{equation}
which tends to $0$ as $N \rightarrow \infty$, since
\begin{equation}
X^{x_0, N}(t) \rightarrow X^{x_0}(t), \quad \forall t \geq 0, \quad N \rightarrow \infty \, \, \mathrm{in} \, \, L^2(\Omega; V).
\end{equation}

Hence we obtain \eqref{ito for z2} by passing $N$ to infinity in \eqref{ito for z2N}.
\end{proof}

We will also need the following Lemma for convergence of some cross terms to zero.

\begin{lemma} \label{cross terms go to zero}
1) Let $z_1 \in \mathrm{Dom}((-A)^{\frac{1}{2}})$ and $z_2 \in L^2(D)$ be arbitrary. Then
\begin{equation}
\lim_{T \rightarrow \infty} \frac{1}{T} \int_0^T \left\langle X_1^{x_0}(t), z_1 \right\rangle_{\mathrm{Dom}((-A)^{\frac{1}{2}})} \left\langle X_2^{x_0}(t), z_2 \right\rangle_{L^2(D)} \, dt = 0, \quad \mathbb P-a.s.
\end{equation}

2) Let $f_k, f_l \in \mathrm{Dom}((-A)^{\frac{1}{2}})$, $k \neq l$, be arbitrary. Then
\begin{equation}
\lim_{T \rightarrow \infty} \frac{1}{T} \int_0^T \left\langle X_1^{x_0}(t), f_k \right\rangle_{\mathrm{Dom}((-A)^{\frac{1}{2}})} \left\langle X_1^{x_0}(t), f_l \right\rangle_{\mathrm{Dom}((-A)^{\frac{1}{2}})} \, dt = 0, \quad \mathbb P-a.s.
\end{equation}

3) Let $e_k, e_l \in L^2(D)$, $k \neq l$, be arbitrary. Then
\begin{equation}
\lim_{T \rightarrow \infty} \frac{1}{T} \int_0^T \left\langle X_2^{x_0}(t), e_k \right\rangle_{L^2(D)} \left\langle X_2^{x_0}(t), e_l \right\rangle_{L^2(D)} \, dt = 0, \quad \mathbb P-a.s.
\end{equation}
\end{lemma}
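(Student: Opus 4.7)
All three identities fit into a single template based on the ergodic theorem (Theorem \ref{ergodic theorem}). The plan is to write each integrand as a product of two linear functionals of $X^{x_0}(t)$, apply that theorem to the resulting quadratic functional on $V$, and evaluate the limit as a Gaussian second moment under the invariant measure $\mu_\infty^{(a,b)} = N(0, Q_\infty^{(a,b)})$.

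Concretely, I would pick $z, \tilde z \in V$ case by case: for part 1), $z = (z_1, 0)^\top$ and $\tilde z = (0, z_2)^\top$; for part 2), $z = (f_k, 0)^\top$ and $\tilde z = (f_l, 0)^\top$; for part 3), $z = (0, e_k)^\top$ and $\tilde z = (0, e_l)^\top$. With these choices the integrand on the left-hand side of each claim equals $\langle X^{x_0}(t), z\rangle_V \langle X^{x_0}(t), \tilde z\rangle_V$. I then set $\varrho(y) := \langle y, z\rangle_V \langle y, \tilde z\rangle_V$. The polarisation identity
\[
\varrho(x) - \varrho(y) = \langle x - y, z\rangle_V \langle x, \tilde z\rangle_V + \langle y, z\rangle_V \langle x - y, \tilde z\rangle_V
\]
together with Cauchy--Schwarz yields $|\varrho(x) - \varrho(y)| \leq \|z\|_V \|\tilde z\|_V \|x - y\|_V (\|x\|_V + \|y\|_V)$, so the local Lipschitz hypothesis of Theorem \ref{ergodic theorem} is satisfied with $m = 1$.

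The ergodic theorem then gives, $\mathbb P$-a.s.,
\[
\lim_{T \to \infty} \frac{1}{T} \int_0^T \varrho(X^{x_0}(t))\, dt = \int_V \langle y, z\rangle_V \langle y, \tilde z\rangle_V \, \mu_\infty^{(a,b)}(dy) = \langle Q_\infty^{(a,b)} z, \tilde z\rangle_V,
\]
the second equality being the standard formula for the second moments of a centred Gaussian measure. To finish, I would read off this inner product from the block-diagonal form \eqref{form of Q infinity diagonal case}: in part 1), $Q_\infty^{(a,b)} z$ lives entirely in the first coordinate of $V$ while $\tilde z$ lives entirely in the second, so the pairing vanishes; in parts 2) and 3) it reduces to $\tfrac{1}{4ab}\langle Q f_k, f_l\rangle_{\mathrm{Dom}((-A)^{\frac{1}{2}})}$ and $\tfrac{1}{4a}\langle Q e_k, e_l\rangle_{L^2(D)}$, respectively, both of which are zero for $k \neq l$ by (A4), since $\{e_n\}$ diagonalises $Q$ and $Qf_k = \lambda_k f_k$.

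I do not foresee any substantive obstacle: the bulk of the work has already been invested in Theorem \ref{form of Q infinity - theorem} and in the ergodic theorem borrowed from \cite{maslowskipospisil}. The only point requiring a little care is the polynomial Lipschitz bound on $\varrho$, which is routine via the polarisation identity above.
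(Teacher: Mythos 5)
Your proposal is correct and follows essentially the same route as the paper: apply Theorem \ref{ergodic theorem} to the quadratic functional $\varrho(y) = \langle y, z\rangle_V \langle y, \tilde z\rangle_V$ with the indicated choices of $z, \tilde z$, identify the limit with $\langle Q_\infty^{(a,b)} z, \tilde z\rangle_V$, and read off zero from the block-diagonal form \eqref{form of Q infinity diagonal case} together with (A4). The only difference is that you spell out the local Lipschitz verification via polarisation, which the paper leaves implicit.
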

\begin{proof}
1) Let $z_1 \in \mathrm{Dom}((-A)^{\frac{1}{2}})$ and $z_2 \in L^2(D)$. Using Theorem \ref{ergodic theorem} with a~fun\-cti\-onal $\varrho : V \rightarrow \mathbb R$, $\varrho (y) = \left\langle y, \left( \begin{array}{c} z_1 \\ 0 \end{array} \right) \right\rangle_V \left\langle y, \left( \begin{array}{c} 0 \\ z_2 \end{array} \right) \right\rangle_V$, $y \in V$ and $m = 1$, we get
\begin{align}
&\lim_{T \rightarrow \infty} \frac{1}{T} \int_0^T \left\langle X_1^{x_0}(t), z_1 \right\rangle_{\mathrm{Dom}((-A)^{\frac{1}{2}})} \left\langle X_2^{x_0}(t), z_2 \right\rangle_{L^2(D)} \, dt = \notag \\
&= \lim_{T \rightarrow \infty} \frac{1}{T} \int_0^T \left\langle X^{x_0}(t), \left( \begin{array}{c} z_1 \\ 0 \end{array} \right) \right\rangle_V \left\langle X^{x_0}(t), \left( \begin{array}{c} 0 \\ z_2 \end{array} \right) \right\rangle_V \, dt \notag \\
&= \int_V \left\langle y, \left( \begin{array}{c} z_1 \\ 0 \end{array} \right) \right\rangle_V \left\langle y, \left( \begin{array}{c} 0 \\ z_2 \end{array} \right) \right\rangle_V \, \mu_{\infty}^{(a,b)} (dy) \notag \\
&= \left\langle Q_{\infty}^{(a,b)} \left( \begin{array}{c} z_1 \\ 0 \end{array} \right), \left( \begin{array}{c} 0 \\ z_2 \end{array} \right) \right\rangle_V \notag \\
&= \left\langle \left( \begin{array}{cc} \frac{1}{4ab} Q & 0 \\ 0 & \frac{1}{4a} Q \end{array} \right) \left( \begin{array}{c} z_1 \\ 0 \end{array} \right), \left( \begin{array}{c} 0 \\ z_2 \end{array} \right) \right\rangle_V = 0, \quad \mathbb P-a.s. \notag
\end{align}

2) Let $f_k, f_l \in \mathrm{Dom}((-A)^{\frac{1}{2}})$ with $k \neq l$. Using Theorem \ref{ergodic theorem} with a~fun\-cti\-onal $\varrho : V \rightarrow \mathbb R$, $\varrho (y) = \left\langle y, \left( \begin{array}{c} f_k \\ 0 \end{array} \right) \right\rangle_V \left\langle y, \left( \begin{array}{c} f_l \\ 0 \end{array} \right) \right\rangle_V$, $y \in V$ and $m = 1$, we get
\begin{align}
&\lim_{T \rightarrow \infty} \frac{1}{T} \int_0^T \left\langle X_1^{x_0}(t), f_k \right\rangle_{\mathrm{Dom}((-A)^{\frac{1}{2}})} \left\langle X_1^{x_0}(t), f_l \right\rangle_{\mathrm{Dom}((-A)^{\frac{1}{2}})} \, dt = \notag \\
&= \lim_{T \rightarrow \infty} \frac{1}{T} \int_0^T \left\langle X^{x_0}(t), \left( \begin{array}{c} f_k \\ 0 \end{array} \right) \right\rangle_V \left\langle X^{x_0}(t), \left( \begin{array}{c} f_l \\ 0 \end{array} \right) \right\rangle_V \, dt \notag \\
&= \left\langle \left( \begin{array}{cc} \frac{1}{4ab} Q & 0 \\ 0 & \frac{1}{4a} Q \end{array} \right) \left( \begin{array}{c} f_k \\ 0 \end{array} \right), \left( \begin{array}{c} f_l \\ 0 \end{array} \right) \right\rangle_V \notag \\
&= \frac{1}{4ab} \left\langle Q f_k, f_l \right\rangle_{\mathrm{Dom}((-A)^{\frac{1}{2}})} \notag \\
&= \frac{\lambda_k}{4ab} \delta_{k,l} = 0, \quad \mathbb P-a.s. \notag
\end{align}

3) Let $e_k, e_l \in L^2(D)$ with $k \neq l$. The proof is analogous to the previous one with functional $\varrho : V \rightarrow \mathbb R$, $\varrho (y) = \left\langle y, \left( \begin{array}{c} 0 \\ e_k \end{array} \right) \right\rangle_V \left\langle y, \left( \begin{array}{c} 0 \\ e_l \end{array} \right) \right\rangle_V$, $y \in V$.
\end{proof}

We will also need the following Lemma from \cite{janak}.

\begin{lemma} \label{convergence to zero}
Let $(X^{x_0}(t), t \geq 0)$ be a solution to the linear equation \eqref{linear equation with parameters} and $R \in \mathcal L(V)$. Then
$$
\frac{1}{\sqrt{t}} \left\langle RX^{x_0}(t), X^{x_0}(t) \right\rangle_V \rightarrow 0
$$
in $L^1(\Omega)$ as $t \rightarrow \infty$.
\end{lemma}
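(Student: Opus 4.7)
The approach is to bound $\langle R X^{x_0}(t), X^{x_0}(t)\rangle_V$ in $L^1$ by a quantity that is uniformly bounded in $t$, so that dividing by $\sqrt{t}$ kills it. The starting estimate is the operator bound
\[
\bigl| \langle R X^{x_0}(t), X^{x_0}(t)\rangle_V \bigr| \leq \|R\|_{\mathcal L(V)} \, \|X^{x_0}(t)\|_V^2,
\]
so it suffices to show that $\mathbb E \|X^{x_0}(t)\|_V^2$ is bounded uniformly in $t \geq 0$.

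The plan is to exploit the standard mild decomposition $X^{x_0}(t) = S(t) x_0 + Z(t)$ together with the elementary inequality $\|a+b\|^2 \leq 2\|a\|^2 + 2\|b\|^2$. For the deterministic drift part I would use assumption (A2) to obtain $\|S(t) x_0\|_V^2 \leq K^2 e^{-2\rho t} \|x_0\|_V^2$, which integrates in $\omega$ to the uniform bound $K^2 e^{-2\rho t}\, \mathbb E\|x_0\|_V^2 \leq K^2 \mathbb E\|x_0\|_V^2 < \infty$ (using the standing assumption $\mathbb E\|x_0\|_V^2 < \infty$).

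For the stochastic convolution $Z(t)$, I would use the isometry $\mathbb E \|Z(t)\|_V^2 = \operatorname{Tr} Q_t$. Since $Q_t \leq Q_\infty^{(a,b)}$ in the operator order and $Q_\infty^{(a,b)}$ is given explicitly by \eqref{form of Q infinity diagonal case}, it is trace class in $V$: indeed, evaluating the trace in the orthonormal basis $\{(f_n,0)^\top\} \cup \{(0,e_n)^\top\}$ of $V$ and using $Qe_n = \lambda_n e_n$ together with $Qf_n = \lambda_n f_n$, one gets $\operatorname{Tr} Q_\infty^{(a,b)} = \bigl(\tfrac{1}{4ab} + \tfrac{1}{4a}\bigr)\sum_n \lambda_n < \infty$ by (A4). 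This gives a constant $M$ with $\mathbb E \|Z(t)\|_V^2 \leq M$ for all $t \geq 0$.

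Combining both pieces yields $\mathbb E\|X^{x_0}(t)\|_V^2 \leq 2K^2 \mathbb E\|x_0\|_V^2 + 2M =: C_0 < \infty$ uniformly in $t$, so
\[
\frac{1}{\sqrt t}\, \mathbb E \bigl| \langle R X^{x_0}(t), X^{x_0}(t)\rangle_V \bigr| \leq \frac{\|R\|_{\mathcal L(V)} C_0}{\sqrt t} \longrightarrow 0
\]
as $t \to \infty$, which is precisely $L^1(\Omega)$ convergence to zero. There is no substantive obstacle here; the only point worth verifying carefully is the trace class claim in the mixed norm of $V$, since the first coordinate lives in $\operatorname{Dom}((-A)^{1/2})$ with a twisted inner product, but the shared diagonalization coming from (A3)--(A4) makes this routine.
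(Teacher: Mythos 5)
Your argument is correct and complete: the Cauchy--Schwarz bound $|\langle RX^{x_0}(t),X^{x_0}(t)\rangle_V|\leq \|R\|_{\mathcal L(V)}\|X^{x_0}(t)\|_V^2$, the uniform bound $\sup_{t\geq 0}\mathbb E\|X^{x_0}(t)\|_V^2<\infty$ via the mild decomposition, exponential stability (A2), and $\mathrm{Tr}\,Q_t\leq \mathrm{Tr}\,Q_\infty^{(a,b)}<\infty$ (your trace computation in the mixed norm of $V$ is right, since $Qf_n=\lambda_n f_n$ and the $f_n$ are orthonormal in $\mathrm{Dom}((-A)^{1/2})$), all check out, and dividing by $\sqrt t$ gives the $L^1$ convergence. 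The paper itself does not prove this lemma but only cites \cite{janak}, so your proposal supplies a self-contained proof by what is surely the intended standard route; no discrepancy to report.
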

\begin{proof}
See \cite{janak}.
\end{proof}

Asymptotic normality of the estimator $\bar{a}_{T, z_2}$ is formulated in the following Theorem.

\begin{theorem} \label{asymptotic normality of abar}
Let $0 \neq z_2 \in L^2(D)$ be arbitrary. The estimator $\bar{a}_{T, z_2}$ is asymp\-to\-ti\-cally normal, i.e.,
\begin{align}
&\mathrm{Law} \left( \sqrt{T} \left( \bar{a}_{T, z_2} - a \right) \right) \stackrel{w^*}{\longrightarrow} \notag \\
&\phantom{=}N \left( 0, \frac{8a^3}{\left\langle Q z_2, z_2 \right\rangle_{L^2(D)}^2} \sum_{k=1}^{\infty} \sum_{n=1}^{\infty} \frac{\lambda_k \lambda_n (\alpha_k + \alpha_n) z_{2,k}^2 z_{2,n}^2}{b(\alpha_k - \alpha_n)^2 + 8a^2 (\alpha_k + \alpha_n)} \right), \quad T \rightarrow \infty. \notag
\end{align}
\end{theorem}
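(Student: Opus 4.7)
The plan is to leverage the It\^o-type representation of Lemma \ref{ito for z2-lemma}, which already decomposes $J_T := \tfrac1T\int_0^T\langle X_2^{x_0}(t),z_2\rangle_{L^2(D)}^2\,dt$ into a constant (the ergodic limit $\tfrac{1}{4a}\langle Qz_2,z_2\rangle_{L^2(D)}$), a stochastic-integral martingale term, and two boundary quadratic-form terms of order $1/T$. Writing
\[
\sqrt{T}(\bar a_{T,z_2}-a)=\frac{\sqrt{T}\bigl(\langle Qz_2,z_2\rangle_{L^2(D)}-4aJ_T\bigr)}{4J_T},
\]
and substituting the representation \eqref{ito for z2}, the numerator decomposes as
\[
\tfrac{1}{\sqrt T}\bigl(\langle EX^{x_0}(T),X^{x_0}(T)\rangle_V-\langle Ex_0,x_0\rangle_V\bigr)
-\tfrac{2}{\sqrt T}\int_0^T\langle EX^{x_0}(t),\Phi\,dB(t)\rangle_V.
\]

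Next I would dispose of the boundary terms: the deterministic piece $\tfrac{1}{\sqrt T}\langle Ex_0,x_0\rangle_V$ vanishes trivially, while Lemma \ref{convergence to zero} (applied with $R=E$) gives $\tfrac{1}{\sqrt T}\langle EX^{x_0}(T),X^{x_0}(T)\rangle_V\to 0$ in $L^1(\Omega)$, hence in probability. The denominator $4J_T$ converges a.s.\ to $\langle Qz_2,z_2\rangle_{L^2(D)}$ by \eqref{ergodic property-window}. So by Slutsky, the limiting distribution of $\sqrt{T}(\bar a_{T,z_2}-a)$ is the same as that of
\[
-\frac{2a}{\langle Qz_2,z_2\rangle_{L^2(D)}}\cdot M_T,\qquad M_T:=\frac{1}{\sqrt T}\int_0^T\langle EX^{x_0}(t),\Phi\,dB(t)\rangle_V.
\]

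For the martingale $M_T\sqrt T$, the quadratic variation at time $T$ equals $\int_0^T\|\Phi^*EX^{x_0}(t)\|_U^2\,dt$. Applying Theorem \ref{ergodic theorem} to the functional $\varrho(y)=\|\Phi^*Ey\|_U^2$ (which satisfies the local Lipschitz bound with $m=1$ since $\Phi^*E\in\mathcal L(V,U)$) yields
\[
\tfrac1T\int_0^T\|\Phi^*EX^{x_0}(t)\|_U^2\,dt\longrightarrow\int_V\|\Phi^*Ey\|_U^2\,\mu_\infty^{(a,b)}(dy)=\mathrm{Tr}\bigl(E\Phi\Phi^*EQ_\infty^{(a,b)}\bigr)=:\sigma^2,\quad\mathbb P\text{-a.s.}
\]
The standard continuous martingale CLT (convergence of normalized quadratic variation to a constant forces $M_T\Rightarrow N(0,\sigma^2)$) then gives the Gaussian limit; collecting the prefactor produces asymptotic variance $\tfrac{4a^2}{\langle Qz_2,z_2\rangle_{L^2(D)}^2}\sigma^2$.

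The remaining task, and the main technical obstacle, is the explicit computation of $\sigma^2=\mathrm{Tr}(E\Phi\Phi^*EQ_\infty^{(a,b)})$. Using the block form of $\Phi\Phi^*$ and $Q_\infty^{(a,b)}$ in \eqref{form of Q infinity diagonal case}, only the $(2,2)$-block $E_4QE_4\tfrac{1}{4a}Q$ and the $(1,1)$-block $E_2QE_3\tfrac{1}{4ab}Q$ contribute to the trace on $V$. Inserting the definition \eqref{definition of E} and expanding with respect to the basis $\{f_n\}\cup\{e_n\}$ via $E_{k,l,2},E_{k,l,3},E_{k,l,4}$, the Kronecker deltas collapse the quadruple sum; diagonal ($k=l$) and off-diagonal contributions combine (using $E_{k,l}=E_{l,k}$ and the symmetrization already present in \eqref{definition of E}) to give
\[
\sigma^2=2a\sum_{k,n=1}^\infty\frac{\lambda_k\lambda_n(\alpha_k+\alpha_n)z_{2,k}^2z_{2,n}^2}{b(\alpha_k-\alpha_n)^2+8a^2(\alpha_k+\alpha_n)},
\]
and multiplying by $\tfrac{4a^2}{\langle Qz_2,z_2\rangle_{L^2(D)}^2}$ yields exactly the variance in the statement. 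The bookkeeping here is delicate because the first and second component traces must be taken with respect to the $\mathrm{Dom}((-A)^{1/2})$-inner product and $L^2(D)$-inner product respectively, so the factors $\alpha_k$ from $f_n=e_n/\sqrt{\alpha_n}$ need to be tracked carefully against the $b^{-1}$ that appears in the $(1,1)$-block of $Q_\infty^{(a,b)}$.
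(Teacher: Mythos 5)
Your proposal is correct, and its overall architecture coincides with the paper's: the same representation from Lemma \ref{ito for z2-lemma}, the same disposal of the boundary terms via Lemma \ref{convergence to zero} together with the ergodic limit of the denominator, and the same martingale central limit theorem for the remaining stochastic integral. Where you genuinely diverge is in identifying the limiting variance, which is the bulk of the paper's proof. The paper expands $\left\langle EX^{x_0}(t), \Phi \, dB(t)\right\rangle_V$ coordinatewise into five families of terms $(I)$--$(V)$, invokes Lemma \ref{cross terms go to zero} to kill each cross product, evaluates the diagonal terms through the ergodic limits of $\frac{1}{T}\int_0^T \left\langle X_1^{x_0}(t), e_l\right\rangle_{L^2(D)}^2 \, dt$ and $\frac{1}{T}\int_0^T \left\langle X_2^{x_0}(t), e_n\right\rangle_{L^2(D)}^2 \, dt$, and then simplifies by re-indexing. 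You instead apply Theorem \ref{ergodic theorem} once, to the single functional $\varrho(y) = \| \Phi^* E y \|_U^2$, and convert the limit of the normalized quadratic variation into the deterministic quantity $\mathrm{Tr}\left(E \Phi \Phi^* E Q_{\infty}^{(a,b)}\right)$ via the Gaussian second-moment identity (legitimate since $E = E^*$ by Lemma \ref{properties of E}). This buys a cleaner argument: the probabilistic cross-term cancellations of Lemma \ref{cross terms go to zero} are absorbed into the diagonality of $Q$ and $Q_{\infty}^{(a,b)}$ inside a purely algebraic trace computation. I checked that the two block traces you isolate, namely $\frac{1}{4ab}\mathrm{Tr}(E_2 Q E_3 Q)$ taken on $\mathrm{Dom}((-A)^{\frac{1}{2}})$ and $\frac{1}{4a}\mathrm{Tr}(E_4 Q E_4 Q)$ taken on $L^2(D)$, reproduce exactly the paper's sums $(A)+(B)$ and $(C)+(D)+(E)$ respectively, so your $\sigma^2 = 2a\sum_{k,n} D_{k,n}^{-1}\lambda_k \lambda_n (\alpha_k+\alpha_n) z_{2,k}^2 z_{2,n}^2$ and hence the final variance agree with the statement; the price is that all the $\sqrt{\alpha}$ bookkeeping from $f_n = e_n/\sqrt{\alpha_n}$ is concentrated in the first-component trace, a point you correctly flag.
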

\begin{proof}
Set $0 \neq z_2 \in L^2(D)$. If we use formula \eqref{abar} for the estimator $\bar{a}_{T, z_2}$ and Lemma \ref{ito for z2-lemma}, we obtain
\begin{align}
& \quad \quad \sqrt{T} \left( \bar{a}_{T, z_2} - a \right) = \label{asymptotics for abarz2} \\
&= \frac{\sqrt{T}}{\frac{4}{T} \int_0^T \left\langle X_2^{x_0}(t), z_2 \right\rangle_{L^2(D)}^2 \, dt} \left( \left\langle Q z_2, z_2 \right\rangle_{L^2(D)} - 4a \frac{1}{T} \int_0^T \left\langle X_2^{x_0}(t), z_2 \right\rangle_{L^2(D)}^2 \, dt \right) \notag \\
&= \frac{\sqrt{T}}{\frac{4}{T} \int_0^T \left\langle X_2^{x_0}(t), z_2 \right\rangle_{L^2(D)}^2 \, dt} \left( \frac{1}{T} \left( \left\langle E X^{x_0}(T), X^{x_0}(T) \right\rangle_V - \left\langle E x_0, x_0 \right\rangle_V \right) \right. \notag \\
&\phantom{=} \left. - \frac{2}{T} \int_0^T \left\langle E X^{x_0}(t), \Phi \, dB(t) \right\rangle_V \right) \notag \\
&= \frac{1}{\frac{4}{T} \int_0^T \left\langle X_2^{x_0}(t), z_2 \right\rangle_{L^2(D)}^2 \, dt} \frac{1}{\sqrt{T}} \left( \left\langle E X^{x_0}(T), X^{x_0}(T) \right\rangle_V - \left\langle E x_0, x_0 \right\rangle_V \right) \notag \\
&\phantom{=} - \frac{1}{\frac{2}{T} \int_0^T \left\langle X_2^{x_0}(t), z_2 \right\rangle_{L^2(D)}^2 \, dt} \frac{1}{\sqrt{T}} \int_0^T \left\langle E X^{x_0}(t), \Phi \, dB(t) \right\rangle_V. \notag
\end{align}

The first term on the right--hand side converges to zero in probability as $T \rightarrow \infty$, since
$$
\lim_{T \rightarrow \infty} \frac{1}{T} \int_0^T \left\langle X_2^{x_0}(t), z_2 \right\rangle_{L^2(D)}^2 \, dt = \frac{1}{4a} \left\langle Q z_2, z_2 \right\rangle_{L^2(D)}, \quad \mathbb P-a.s.
$$
by Theorem \ref{abarbbar - general case} and
$$
\lim_{T \rightarrow \infty} \frac{1}{\sqrt{T}} \left( \left\langle E X^{x_0}(T), X^{x_0}(T) \right\rangle_V - \left\langle E x_0, x_0 \right\rangle_V \right) = 0, \quad \mathrm{in} \, \, L^1(\Omega)
$$
by Lemma \ref{convergence to zero}. Define
\begin{align}
w(T) &= \frac{1}{\sqrt{T}} \int_0^T \left\langle E X^{x_0}(t), \Phi \, dB(t) \right\rangle_V \notag \\
&= \frac{1}{\sqrt{T}} \int_0^T \sum_{n=1}^{\infty} \sqrt{\lambda_n} \left\langle E X^{x_0}(t), \left( \begin{array}{c} 0 \\ e_n \end{array} \right) \right\rangle_V \, d\beta_n (t), \notag
\end{align}
where we have used the representation of $V$--valued Brownian motion $B(t)$. For any $n \in \mathbb N$ $\beta_n(t) = \left\langle B(t), e_n \right\rangle_V$ are mutually independent scalar Brownian motions (see \cite{dapratozabczyk}).

First, let us express scalar product in the above series
\begin{equation}
\left\langle E X^{x_0}(t), \left( \begin{array}{c} 0 \\ e_n \end{array} \right) \right\rangle_V = \left\langle E_3 X_1^{x_0}(t) + E_4 X_2^{x_0}(t), e_n \right\rangle_{L^2(D)}.
\end{equation}

Next we have
\begin{align}
\left\langle E_3 X_1^{x_0}(t), e_n \right\rangle_{L^2(D)} &= \sum_{k=1}^{\infty} \sum_{l=k+1}^{\infty} \frac{z_{2,k} z_{2,l}}{D_{k,l}} \left\langle E_{k,l,3} X_1^{x_0}(t), e_n \right\rangle_{L^2(D)} \notag \\
&= \sum_{k=1}^{\infty} \sum_{l=k+1}^{\infty} \frac{z_{2,k} z_{2,l}}{D_{k,l}} \left( 4ab \alpha_l (\alpha_k - \alpha_l) \left\langle X_1^{x_0}(t), e_l \right\rangle_{L^2(D)} \delta_{k,n} \right. \notag \\
&\phantom{=} \left. + 4ab \alpha_k (\alpha_l - \alpha_k) \left\langle X_1^{x_0}(t), e_k \right\rangle_{L^2(D)} \delta_{n,l} \right) \notag \\
&= \sum_{k=1}^{\infty} \sum_{l=k+1}^{\infty} \frac{z_{2,k} z_{2,l}}{D_{k,l}} 4ab \alpha_l (\alpha_k - \alpha_l) \left\langle X_1^{x_0}(t), e_l \right\rangle_{L^2(D)} \delta_{k,n} \notag \\
&\phantom{=}+ \sum_{k=1}^{\infty} \sum_{l=k+1}^{\infty} \frac{z_{2,k} z_{2,l}}{D_{k,l}} 4ab \alpha_k (\alpha_l - \alpha_k) \left\langle X_1^{x_0}(t), e_k \right\rangle_{L^2(D)} \delta_{n,l} \notag \\
&= (I) + (II), \notag
\end{align}
where we have used the fact that $E_{k,3} = 0$. Furthermore, compute
\begin{align}
&\left\langle E_4 X_2^{x_0}(t), e_n \right\rangle_{L^2(D)} = \notag \\
&=\left\langle \sum_{k=1}^{\infty} z_{2,k}^2 E_{k,4} X_2^{x_0}(t) + \sum_{k=1}^{\infty} \sum_{l=k+1}^{\infty} \frac{z_{2,k} z_{2,l}}{D_{k,l}} E_{k,l,4} X_2^{x_0}(t), e_n \right\rangle_{L^2(D)} \notag \\
&= \sum_{k=1}^{\infty} z_{2,k}^2 \left\langle X_2^{x_0}(t), e_k \right\rangle_{L^2(D)} \delta_{k,n} \notag \\
&\phantom{=}+ \sum_{k=1}^{\infty} \sum_{l=k+1}^{\infty} \frac{z_{2,k} z_{2,l}}{D_{k,l}} 8 a^2 (\alpha_k + \alpha_l) \left\langle X_2^{x_0}(t), e_k \right\rangle_{L^2(D)} \delta_{n,l} \notag \\
&\phantom{=} + \sum_{k=1}^{\infty} \sum_{l=k+1}^{\infty} \frac{z_{2,k} z_{2,l}}{D_{k,l}} 8 a^2 (\alpha_k + \alpha_l) \left\langle X_2^{x_0}(t), e_l \right\rangle_{L^2(D)} \delta_{k,n} \notag \\
&= (III) + (IV) + (V). \notag
\end{align}

By the central limit theorem for martingales (see e.g. \cite{kutoyants}, Proposition 1.22.), $\mathrm{Law} \left( w(T) \right)$ converges weakly to a Gaussian distribution with a zero mean and variance given by the $\mathbb P-a.s.$ limit
\begin{equation}
\lim_{T \rightarrow \infty} \frac{1}{T} \int_0^T \sum_{n=1}^{\infty} \lambda_n \left( (I) + \ldots + (V) \right)^2 \, dt.
\end{equation}

Therefore all the limits in the rest of the proof are considered in the $\mathbb P-a.s.$ sense.

The limits of the cross terms are zero by Lemma \ref{cross terms go to zero}. For example
\begin{align}
&\lim_{T \rightarrow \infty} \frac{1}{T} \int_0^T \sum_{n=1}^{\infty} \lambda_n (I)(II) \, dt = \notag \\
&= \lim_{T \rightarrow \infty} \frac{1}{T} \int_0^T \sum_{n=1}^{\infty} \lambda_n \left( \sum_{l=n+1}^{\infty} \frac{z_{2,n} z_{2,l}}{D_{n,l}} 4ab \alpha_l (\alpha_n - \alpha_l) \left\langle X_1^{x_0}(t), e_l \right\rangle_{L^2(D)} \right) \times \notag \\
&\phantom{=} \left( \sum_{k=1}^{n-1} \frac{z_{2,k} z_{2,n}}{D_{k,n}} 4ab \alpha_k (\alpha_n - \alpha_k) \left\langle X_1^{x_0}(t), e_k \right\rangle_{L^2(D)} \right) \, dt \notag \\
&= \sum_{n=1}^{\infty} \sum_{l=n+1}^{\infty} \sum_{k=1}^{n-1} \lambda_n \frac{z_{2,n}^2 z_{2,l} z_{2,k}}{D_{n,l} D_{k,n}} 16 a^2 b^2 \alpha_l \alpha_k (\alpha_n - \alpha_l) (\alpha_n - \alpha_k) \times \notag \\
&\phantom{=} \left( \lim_{T \rightarrow \infty} \frac{1}{T} \int_0^T \left\langle X_1^{x_0}(t), e_l \right\rangle_{L^2(D)} \left\langle X_1^{x_0}(t), e_k \right\rangle_{L^2(D)} \, dt \right) = 0, \notag
\end{align}
because $k \neq l$, $\left\langle X_1^{x_0}(t), e_k \right\rangle_{L^2(D)} = \frac{1}{\sqrt{\alpha_k}} \left\langle X_1^{x_0}(t), f_k \right\rangle_{\mathrm{Dom}((-A)^{\frac{1}{2}})}$ (and similarly with the index $l$) and we may use Lemma \ref{cross terms go to zero}, 2).

Also
\begin{align}
&\lim_{T \rightarrow \infty} \frac{1}{T} \int_0^T \sum_{n=1}^{\infty} \lambda_n (I)(III) \, dt = \notag \\
&= \lim_{T \rightarrow \infty} \frac{1}{T} \int_0^T \sum_{n=1}^{\infty} \lambda_n \left( \sum_{l=n+1}^{\infty} \frac{z_{2,n} z_{2,l}}{D_{n,l}} 4ab \alpha_l (\alpha_n - \alpha_l) \left\langle X_1^{x_0}(t), e_l \right\rangle_{L^2(D)} \right) \times \notag \\
&\phantom{=} \left( z_{2,n}^2 \left\langle X_2^{x_0}(t), e_n \right\rangle_{L^2(D)} \right) \, dt \notag \\
&= \sum_{n=1}^{\infty} \sum_{l=n+1}^{\infty} \lambda_n \frac{z_{2,n}^3 z_{2,l}}{D_{n,l}} 4ab \alpha_l (\alpha_n - \alpha_l) \times \notag \\
&\phantom{=} \left( \lim_{T \rightarrow \infty} \frac{1}{T} \int_0^T \left\langle X_1^{x_0}(t), e_l \right\rangle_{L^2(D)} \left\langle X_2^{x_0}(t), e_n \right\rangle_{L^2(D)} \, dt \right) = 0, \notag
\end{align}
since $n \neq l$ and the limit follows from Lemma \ref{cross terms go to zero}, 1). The remaining limits of the cross terms are handled similarly.

Now we compute the limits of the "diagonal" terms.
\begin{align}
(A) &= \lim_{T \rightarrow \infty} \frac{1}{T} \int_0^T \sum_{n=1}^{\infty} \lambda_n (I)^2 \, dt \notag \\
&= \lim_{T \rightarrow \infty} \frac{1}{T} \int_0^T \sum_{n=1}^{\infty} \lambda_n \left( \sum_{l=n+1}^{\infty} \frac{z_{2,n} z_{2,l}}{D_{n,l}} 4ab \alpha_l (\alpha_n - \alpha_l) \left\langle X_1^{x_0}(t), e_l \right\rangle_{L^2(D)} \right)^2 \, dt \notag \\
&\stackrel{(*)}{=} \lim_{T \rightarrow \infty} \frac{1}{T} \int_0^T \sum_{n=1}^{\infty} \sum_{l=n+1}^{\infty} \lambda_n \frac{z_{2,n}^2 z_{2,l}^2}{D_{n,l}^2} 16 a^2 b^2 \alpha_l^2 (\alpha_n - \alpha_l)^2 \left\langle X_1^{x_0}(t), e_l \right\rangle_{L^2(D)}^2 \, dt \notag \\
&= \sum_{n=1}^{\infty} \sum_{l=n+1}^{\infty} \frac{1}{D_{n,l}^2} 4ab \lambda_n \lambda_l \alpha_l (\alpha_n - \alpha_l)^2 z_{2,n}^2 z_{2,l}^2, \notag
\end{align}
since $\lim_{T \rightarrow \infty} \frac{1}{T} \int_0^T \left\langle X_1^{x_0}(t), e_l \right\rangle_{L^2(D)}^2 \, dt = \frac{\lambda_l}{4ab \alpha_l}$ and in the equality $(*)$ we have also used Lemma \ref{cross terms go to zero} for the cross summands.

In a similar manner, we have
\begin{align}
(B) &= \lim_{T \rightarrow \infty} \frac{1}{T} \int_0^T \sum_{n=1}^{\infty} \lambda_n (II)^2 \, dt \notag \\
&= \lim_{T \rightarrow \infty} \frac{1}{T} \int_0^T \sum_{n=1}^{\infty} \lambda_n \left( \sum_{k=1}^{n-1} \frac{z_{2,k} z_{2,n}}{D_{k,n}} 4ab \alpha_k (\alpha_n - \alpha_k) \left\langle X_1^{x_0}(t), e_k \right\rangle_{L^2(D)} \right)^2 \, dt \notag \\
&\stackrel{(*)}{=} \lim_{T \rightarrow \infty} \frac{1}{T} \int_0^T \sum_{n=1}^{\infty} \sum_{k=1}^{n-1} \lambda_n \frac{z_{2,k}^2 z_{2,n}^2}{D_{k,n}^2} 16 a^2 b^2 \alpha_k^2 (\alpha_n - \alpha_k)^2 \left\langle X_1^{x_0}(t), e_k \right\rangle_{L^2(D)}^2 \, dt \notag \\
&= \sum_{n=1}^{\infty} \sum_{k=1}^{n-1} \frac{1}{D_{k,n}^2} 4ab \lambda_n \lambda_k \alpha_k (\alpha_n - \alpha_k)^2 z_{2,k}^2 z_{2,n}^2, \notag
\end{align}
\begin{align}
(C) &= \lim_{T \rightarrow \infty} \frac{1}{T} \int_0^T \sum_{n=1}^{\infty} \lambda_n (III)^2 \, dt \notag \\
&= \lim_{T \rightarrow \infty} \frac{1}{T} \int_0^T \sum_{n=1}^{\infty} \lambda_n z_{2,n}^4 \left\langle X_2^{x_0}(t), e_n \right\rangle_{L^2(D)}^2 \, dt \notag \\
&= \frac{1}{4a} \sum_{n=1}^{\infty} \lambda_n^2 z_{2,n}^4, \notag
\end{align}
since $\lim_{T \rightarrow \infty} \frac{1}{T} \int_0^T \left\langle X_2^{x_0}(t), e_n \right\rangle_{L^2(D)}^2 \, dt = \frac{\lambda_n}{4a}$.

Next, we have
\begin{align}
(D) &= \lim_{T \rightarrow \infty} \frac{1}{T} \int_0^T \sum_{n=1}^{\infty} \lambda_n (IV)^2 \, dt \notag \\
&= \lim_{T \rightarrow \infty} \frac{1}{T} \int_0^T \sum_{n=1}^{\infty} \lambda_n \left( \sum_{k=1}^{n-1} \frac{z_{2,k} z_{2,n}}{D_{k,n}} 8a^2 (\alpha_k + \alpha_n) \left\langle X_2^{x_0}(t), e_k \right\rangle_{L^2(D)} \right)^2 \, dt \notag \\
&\stackrel{(*)}{=} \lim_{T \rightarrow \infty} \frac{1}{T} \int_0^T \sum_{n=1}^{\infty} \sum_{k=1}^{n-1} \lambda_n \frac{z_{2,k}^2 z_{2,n}^2}{D_{k,n}^2} 64 a^4 (\alpha_k + \alpha_n)^2 \left\langle X_2^{x_0}(t), e_k \right\rangle_{L^2(D)}^2 \, dt \notag \\
&= \sum_{n=1}^{\infty} \sum_{k=1}^{n-1} \frac{1}{D_{k,n}^2} 16a^3 \lambda_n \lambda_k (\alpha_k + \alpha_n)^2 z_{2,k}^2 z_{2,n}^2, \notag
\end{align}
\begin{align}
(E) &= \lim_{T \rightarrow \infty} \frac{1}{T} \int_0^T \sum_{n=1}^{\infty} \lambda_n (V)^2 \, dt \notag \\
&= \lim_{T \rightarrow \infty} \frac{1}{T} \int_0^T \sum_{n=1}^{\infty} \lambda_n \left( \sum_{l=n+1}^{\infty} \frac{z_{2,n} z_{2,l}}{D_{n,l}} 8a^2 (\alpha_n + \alpha_l) \left\langle X_2^{x_0}(t), e_l \right\rangle_{L^2(D)} \right)^2 \, dt \notag \\
&\stackrel{(*)}{=} \lim_{T \rightarrow \infty} \frac{1}{T} \int_0^T \sum_{n=1}^{\infty} \sum_{l=n+1}^{\infty} \lambda_n \frac{z_{2,n}^2 z_{2,l}^2}{D_{n,l}^2} 64 a^4 (\alpha_n + \alpha_l)^2 \left\langle X_2^{x_0}(t), e_l \right\rangle_{L^2(D)}^2 \, dt \notag \\
&= \sum_{n=1}^{\infty} \sum_{l=n+1}^{\infty} \frac{1}{D_{n,l}^2} 16a^3 \lambda_n \lambda_l (\alpha_n + \alpha_l)^2 z_{2,n}^2 z_{2,l}^2. \notag
\end{align}

The resulting formula for the limiting variance of $w(T)$ is the sum of the five above terms, however it may be further simplified. Since
\begin{equation}
\sum_{n=1}^{\infty} \sum_{k=1}^{n-1} a_{n,k} = \sum_{k=1}^{\infty} \sum_{n=k+1}^{\infty} a_{n,k}, \quad \forall a_{n,k} \in \mathbb R,
\end{equation}
we may switch the sums in the term $(B)$ and by changing indices $n \mapsto k$, $l \mapsto n$ in the term $(A)$, we arrive at
\begin{equation}
(A) + (B) = \sum_{k=1}^{\infty} \sum_{n=k+1}^{\infty} \frac{1}{D_{k,n}^2} 4ab \lambda_k \lambda_n (\alpha_k + \alpha_n) (\alpha_n - \alpha_k)^2 z_{2,k}^2 z_{2,n}^2.
\end{equation}

Similary, if we switch the sums in the term $(D)$ and change indices $n \mapsto k$, $l \mapsto n$ in the term $(E)$, we find out that $(D) = (E)$, so
\begin{equation}
(D) + (E) = \sum_{k=1}^{\infty} \sum_{n=k+1}^{\infty} \frac{1}{D_{k,n}^2} 32a^3 \lambda_k \lambda_n (\alpha_k + \alpha_n)^2 z_{2,k}^2 z_{2,n}^2
\end{equation}
and consequently
\begin{align}
& \quad (A) + (B) + (D) + (E) = \label{ABDE} \\
&= \sum_{k=1}^{\infty} \sum_{n=k+1}^{\infty} \frac{1}{D_{k,n}^2} 4a \lambda_k \lambda_n (\alpha_k + \alpha_n) z_{2,k}^2 z_{2,n}^2 \left( b(\alpha_n - \alpha_k)^2 + 8a^2 (\alpha_k + \alpha_n) \right) \notag \\
&= \sum_{k=1}^{\infty} \sum_{n=k+1}^{\infty} \frac{1}{D_{k,n}} 4a \lambda_k \lambda_n (\alpha_k + \alpha_n) z_{2,k}^2 z_{2,n}^2. \notag
\end{align}

Since
\begin{equation}
\sum_{k=1}^{\infty} \sum_{n=k+1}^{\infty} a_{n,k} = \frac{1}{2} \sum_{k=1}^{\infty} \sum_{n=1, n \neq k}^{\infty} a_{n,k}, \quad \mathrm{if} \, \, \forall k \in \mathbb N \, \forall n \in \mathbb N \quad a_{k,n} = a_{n,k},
\end{equation}
the sum on the right--hand side of \eqref{ABDE} equals to
\begin{equation} \label{ABDE simplified}
2a \sum_{k=1}^{\infty} \sum_{n=1, n \neq k}^{\infty} \frac{1}{D_{k,n}} \lambda_k \lambda_n (\alpha_k + \alpha_n) z_{2,k}^2 z_{2,n}^2.
\end{equation}

The summand $(C)$ is the corresponding sum to \eqref{ABDE simplified}, where $n = k$, so we may add it and we end up with the formula for the limiting variance of $w(T)$, that is
\begin{equation}
\mathrm{Var} \left( w(T) \right) = 2a \sum_{k=1}^{\infty} \sum_{n=1}^{\infty} \frac{1}{D_{k,n}} \lambda_k \lambda_n (\alpha_k + \alpha_n) z_{2,k}^2 z_{2,n}^2, \quad T \rightarrow \infty.
\end{equation}

Since the multiplicative factor $- \frac{1}{\frac{2}{T} \int_0^T \left\langle X_2^{x_0}(t), z_2 \right\rangle_{L^2(D)}^2 \, dt}$ of $w(T)$ on the right--hand side of \eqref{asymptotics for abarz2} converges to $\frac{-2a}{\left\langle Q z_2, z_2 \right\rangle_{L^2(D)}}$ as $T \rightarrow \infty$, we arrive at
\begin{align}
&\mathrm{Law} \left( \sqrt{T} \left( \bar{a}_{T, z_2} - a \right) \right) \stackrel{w^*}{\longrightarrow} \notag \\
&\phantom{=}N \left( 0, \frac{8a^3}{\left\langle Q z_2, z_2 \right\rangle_{L^2(D)}^2} \sum_{k=1}^{\infty} \sum_{n=1}^{\infty} \frac{\lambda_k \lambda_n (\alpha_k + \alpha_n) z_{2,k}^2 z_{2,n}^2}{b(\alpha_k - \alpha_n)^2 + 8a^2 (\alpha_k + \alpha_n)} \right), \quad T \rightarrow \infty. \notag
\end{align}
\end{proof}

\subsection{Asymptotic normality of the estimator $\bar{b}_{T, z_1, z_2}$}
The estimator $\bar{b}_{T, z_1, z_2}$ (defined by \eqref{bbar}) is also asymptotically normal. The proof uses similiar technique as the proof of Theorem \ref{asymptotic normality of abar}, so the setup and auxiliary Lemmas will be analogous to those in previous subsection.

Let $k \in \mathbb N$ be arbitrary and define the operator $F_k : V \rightarrow V$ by
\begin{equation}
F_k x = F_k \left( \begin{array}{c} x_1 \\ x_2 \end{array} \right) = \left( \begin{array}{cc} F_{k,1} & F_{k,2} \\ F_{k,3} & F_{k,4} \end{array} \right) \left( \begin{array}{c} x_1 \\ x_2 \end{array} \right), \quad \forall x = \left( \begin{array}{c} x_1 \\ x_2 \end{array} \right) \in V,
\end{equation}
where
\begin{align}
F_{k,1} : x_1 &\longmapsto \left( b + \frac{4a^2}{\alpha_k} \right) \left\langle x_1, e_k \right\rangle_{L^2(D)} e_k, \notag \\
F_{k,2} : x_2 &\longmapsto \frac{2a}{\alpha_k} \left\langle x_2, e_k \right\rangle_{L^2(D)} e_k, \notag \\
F_{k,3} : x_1 &\longmapsto 2a \left\langle x_1, e_k \right\rangle_{L^2(D)} e_k, \notag \\
F_{k,4} : x_2 &\longmapsto \left\langle x_2, e_k \right\rangle_{L^2(D)} e_k, \notag
\end{align}
for any $x_1 \in \mathrm{Dom}((-A)^{\frac{1}{2}})$ and $x_2 \in L^2(D)$.

Let $k, l \in \mathbb N$ be arbitrary and define the operator $F_{k,l} : V \rightarrow V$ by
\begin{equation}
F_{k,l} x = F_k \left( \begin{array}{c} x_1 \\ x_2 \end{array} \right) = \frac{1}{D_{k,l}} \left( \begin{array}{cc} F_{k,l,1} & F_{k,l,2} \\ F_{k,l,3} & F_{k,l,4} \end{array} \right) \left( \begin{array}{c} x_1 \\ x_2 \end{array} \right), \quad \forall x = \left( \begin{array}{c} x_1 \\ x_2 \end{array} \right) \in V,
\end{equation}
where
\begin{align}
F_{k,l,1} : x_1 \longmapsto \, &8a^2 \sqrt{\frac{\alpha_l}{\alpha_k}} \left( 8a^2 + b (\alpha_k + \alpha_l) \right) \left\langle x_1, e_l \right\rangle_{L^2(D)} e_k \notag \\
&+ 8a^2 \sqrt{\frac{\alpha_k}{\alpha_l}} \left( 8a^2 + b (\alpha_k + \alpha_l) \right) \left\langle x_1, e_k \right\rangle_{L^2(D)} e_l, \notag \\
F_{k,l,2} : x_2 \longmapsto \, &4a \sqrt{\frac{\alpha_k}{\alpha_l}} \left( 8a^2 + b(\alpha_k - \alpha_l) \right) \left\langle x_2, e_k \right\rangle_{L^2(D)} e_l \notag \\
&+ 4a \sqrt{\frac{\alpha_l}{\alpha_k}} \left( 8a^2 + b(\alpha_l - \alpha_k) \right) \left\langle x_2, e_l \right\rangle_{L^2(D)} e_k, \notag \\
F_{k,l,3} : x_1 \longmapsto \, &4a \sqrt{\alpha_k \alpha_l} \left( 8a^2 + b(\alpha_k - \alpha_l) \right) \left\langle x_1, e_l \right\rangle_{L^2(D)} e_k \notag \\
&+ 4a \sqrt{\alpha_k \alpha_l} \left( 8a^2 + b(\alpha_l - \alpha_k) \right) \left\langle x_1, e_k \right\rangle_{L^2(D)} e_l, \notag \\
F_{k,l,4} : x_2 \longmapsto \, &16a^2 \sqrt{\alpha_k \alpha_l} \left\langle x_2, e_k \right\rangle_{L^2(D)} e_l + 16a^2 \sqrt{\alpha_k \alpha_l} \left\langle x_2, e_l \right\rangle_{L^2(D)} e_k, \notag
\end{align}
for any $x_1 \in \mathrm{Dom}((-A)^{\frac{1}{2}})$ and $x_2 \in L^2(D)$ with $D_{k,l}$ defined above.

The properties of the operators $F_k$ and $F_{k,l}$ needed in the sequel are summarized in the following Lemma.

\begin{lemma}
1) The operator $F_k \in \mathcal L(V)$ is self--adjoint for any given $k \in \mathbb N$. Moreover,
\begin{equation} \label{FkxAx}
\left\langle F_k x, \mathcal A x \right\rangle_V = -2ab \left\langle x_1, f_k \right\rangle_{\mathrm{Dom}((-A)^{\frac{1}{2}})}^2,
\end{equation}
for any $x = (x_1, x_2)^{\top} \in \mathrm{Dom}(\mathcal A)$.

2) The operator $F_{k,l} \in \mathcal L(V)$ is self--adjoint for any given $k, l \in \mathbb N$. Moreover,
\begin{equation} \label{FklxAx}
\left\langle F_{k,l} x, \mathcal A x \right\rangle_V = -4ab \left\langle x_1, f_k \right\rangle_{\mathrm{Dom}((-A)^{\frac{1}{2}})} \left\langle x_1, f_l \right\rangle_{\mathrm{Dom}((-A)^{\frac{1}{2}})},
\end{equation}
for any $x = (x_1, x_2)^{\top} \in \mathrm{Dom}(\mathcal A)$.
\end{lemma}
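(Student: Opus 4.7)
The plan is to mirror the proof of the earlier analogous lemma concerning $E_k$ and $E_{k,l}$, with the coefficients of $F_k$ and $F_{k,l}$ now arranged so that the resulting quadratic form picks out the $\mathrm{Dom}((-A)^{\frac{1}{2}})$-projection of $x_1$ onto the basis vectors $f_k$ (rather than the $L^2(D)$-projection of $x_2$ onto $e_k$, as happened in the $E$-case).

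For part 1, boundedness of $F_k$ is immediate since each block $F_{k,i}$ is of rank one. To prove self-adjointness I would expand $\left\langle F_k x, y\right\rangle_V$ using the definition of the inner product on $V$, converting the first-component pairing via the identity $\left\langle (-A)^{\frac{1}{2}} e_k, (-A)^{\frac{1}{2}} u\right\rangle_{L^2(D)} = \alpha_k \left\langle e_k, u\right\rangle_{L^2(D)}$. The specific coefficients $b+4a^2/\alpha_k$, $2a/\alpha_k$, $2a$ and $1$ are chosen so that the cross contributions coming from the $(1,2)$- and $(2,1)$-blocks both reduce to $2a\left\langle x_1, e_k\right\rangle_{L^2(D)}\left\langle y_2, e_k\right\rangle_{L^2(D)}$ (and its swap), yielding symmetry in $x$ and $y$. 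For the identity \eqref{FkxAx} I would substitute $\mathcal A x = (x_2, bAx_1 - 2ax_2)^{\top}$, use $\left\langle Ax_1, e_k\right\rangle_{L^2(D)} = -\alpha_k \left\langle x_1, e_k\right\rangle_{L^2(D)}$, and collect all products. The four $\left\langle x_1, e_k\right\rangle\left\langle x_2, e_k\right\rangle$-type contributions cancel pairwise, as do the two $\left\langle x_2, e_k\right\rangle^2$ contributions, leaving only $-2ab\alpha_k\left\langle x_1, e_k\right\rangle_{L^2(D)}^2$, which equals $-2ab\left\langle x_1, f_k\right\rangle_{\mathrm{Dom}((-A)^{\frac{1}{2}})}^2$ thanks to $\left\langle x_1, f_k\right\rangle_{\mathrm{Dom}((-A)^{\frac{1}{2}})} = \sqrt{\alpha_k}\left\langle x_1, e_k\right\rangle_{L^2(D)}$.

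For part 2 the argument proceeds along the same lines but with significantly more bookkeeping. The factor $1/D_{k,l}$ is again the inverse of the denominator from \eqref{form of Q infinity general case} (divided by $b$), and the ratios $\sqrt{\alpha_l/\alpha_k}$, $\sqrt{\alpha_k/\alpha_l}$ and $\sqrt{\alpha_k\alpha_l}$ are engineered precisely to compensate the $\alpha$-prefactors introduced by the $\mathrm{Dom}((-A)^{\frac{1}{2}})$-inner product on the first component. Self-adjointness follows because each off-diagonal block pair is symmetric under $(x,y)$-swap, supported by $D_{k,l} = D_{l,k}$. For the identity \eqref{FklxAx}, the $b(\alpha_k-\alpha_l)^2$ and $8a^2(\alpha_k+\alpha_l)$ contributions from the four blocks combine to produce exactly $D_{k,l}$ multiplying $\sqrt{\alpha_k\alpha_l}\left\langle x_1, e_k\right\rangle_{L^2(D)}\left\langle x_1, e_l\right\rangle_{L^2(D)}$, cancelling the $1/D_{k,l}$ and yielding $-4ab\left\langle x_1, f_k\right\rangle_{\mathrm{Dom}((-A)^{\frac{1}{2}})}\left\langle x_1, f_l\right\rangle_{\mathrm{Dom}((-A)^{\frac{1}{2}})}$.

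The main obstacle is simply the volume of algebraic bookkeeping in part 2: eight cross-term summands must be grouped and cancelled while simultaneously tracking the rescalings between $L^2(D)$- and $\mathrm{Dom}((-A)^{\frac{1}{2}})$-projections; verifying that precisely the $\left\langle x_1, e_k\right\rangle\left\langle x_1, e_l\right\rangle$ piece survives (and with the right coefficient) is the only non-routine ingredient. Conceptually nothing new happens beyond part 1, so I would expect the author (as was done in the corresponding lemma for $E_{k,l}$) to relegate the detailed computation to a "similarly as above" remark.
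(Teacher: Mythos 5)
Your proposal is correct and follows essentially the same route as the paper: an explicit expansion of $\left\langle F_k x, y\right\rangle_V$ and $\left\langle F_k x, \mathcal A x\right\rangle_V$ using $\left\langle u, e_k\right\rangle_{\mathrm{Dom}((-A)^{\frac{1}{2}})} = \alpha_k \left\langle u, e_k\right\rangle_{L^2(D)}$ and $\left\langle Ax_1, e_k\right\rangle_{L^2(D)} = -\alpha_k\left\langle x_1, e_k\right\rangle_{L^2(D)}$, with the cross terms cancelling exactly as you describe, and part 2 dispatched by the same (longer) computation --- which the paper indeed relegates to a ``similarly as above'' remark, just as you anticipated. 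The key cancellations you identify (in particular that $\alpha_k(8a^2+b(\alpha_k-\alpha_l)) + \alpha_l(8a^2+b(\alpha_l-\alpha_k)) = D_{k,l}$ produces the surviving $\left\langle x_1, e_k\right\rangle\left\langle x_1, e_l\right\rangle$ term) all check out.
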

\begin{proof}
1) Let $k \in \mathbb N$ be arbitrary. Obviously $F_k \in \mathcal L (V)$ and for $x = (x_1, x_2)^{\top} \in V$ and $y = (y_1, y_2)^{\top} \in V$ we have
\begin{align}
\left\langle F_k x, y \right\rangle_V &= \left\langle \left( \begin{array}{c} F_{k,1} x_1 + F_{k,2} x_2 \\ F_{k,3} x_1 + F_{k,4} x_2 \end{array} \right), \left( \begin{array}{c} y_1 \\ y_2 \end{array} \right) \right\rangle_V \notag \\
&= \left( b + \frac{4a^2}{\alpha_k} \right) \left\langle x_1, e_k \right\rangle_{L^2(D)} \left\langle y_1, e_k \right\rangle_{\mathrm{Dom}((-A)^{\frac{1}{2}})} \notag \\
&\phantom{=}+\frac{2a}{\alpha_k} \left\langle x_2, e_k \right\rangle_{L^2(D)} \left\langle y_1, e_k \right\rangle_{\mathrm{Dom}((-A)^{\frac{1}{2}})} \notag \\
&\phantom{=}+ 2a \left\langle x_1, e_k \right\rangle_{L^2(D)} \left\langle y_2, e_k \right\rangle_{L^2(D)} + \left\langle x_2, e_k \right\rangle_{L^2(D)} \left\langle y_2, e_k \right\rangle_{L^2(D)} \notag \\
&= (b \alpha_k + 4a^2) \left\langle x_1, e_k \right\rangle_{L^2(D)} \left\langle y_1, e_k \right\rangle_{L^2(D)} + 2a \left\langle x_2, e_k \right\rangle_{L^2(D)} \left\langle y_1, e_k \right\rangle_{L^2(D)} \notag \\
&\phantom{=}+ 2a \left\langle x_1, e_k \right\rangle_{L^2(D)} \left\langle y_2, e_k \right\rangle_{L^2(D)} + \left\langle x_2, e_k \right\rangle_{L^2(D)} \left\langle y_2, e_k \right\rangle_{L^2(D)} \notag \\
&= \left\langle x, F_k y \right\rangle_V, \notag
\end{align}
hence $F_k = F_k^*$. Moreover, for every $x = (x_1, x_2)^{\top} \in \mathrm{Dom}(\mathcal A)$ we have
\begin{align}
\left\langle F_k x, \mathcal A x \right\rangle_V &= \left\langle \left( \begin{array}{c} F_{k,1} x_1 + F_{k,2} x_2 \\ F_{k,3} x_1 + F_{k,4} x_2 \end{array} \right), \left( \begin{array}{c} x_2 \\ bAx_1 - 2ax_2 \end{array} \right) \right\rangle_V \notag \\
&= \left( b + \frac{4a^2}{\alpha_k} \right) \left\langle x_1, e_k \right\rangle_{L^2(D)} \left\langle (-A)^{\frac{1}{2}} x_2, (-A)^{\frac{1}{2}} e_k \right\rangle_{L^2(D)} \notag \\
&\phantom{=}+ \frac{2a}{\alpha_k} \left\langle x_2, e_k \right\rangle_{L^2(D)} \left\langle (-A)^{\frac{1}{2}} x_2, (-A)^{\frac{1}{2}} e_k \right\rangle_{L^2(D)} \notag \\
&\phantom{=}+ 2ab \left\langle x_1, e_k \right\rangle_{L^2(D)} \left\langle A x_1, e_k \right\rangle_{L^2(D)} - 4a^2 \left\langle x_1, e_k \right\rangle_{L^2(D)} \left\langle x_2, e_k \right\rangle_{L^2(D)} \notag \\
&\phantom{=}+ b \left\langle x_2, e_k \right\rangle_{L^2(D)} \left\langle A x_1, e_k \right\rangle_{L^2(D)} -2a \left\langle x_2, e_k \right\rangle_{L^2(D)}^2 \notag \\
&= 2ab \left\langle x_1, e_k \right\rangle_{L^2(D)} \left\langle A x_1, e_k \right\rangle_{L^2(D)} \notag \\
&= -2ab \left\langle x_1, f_k \right\rangle_{\mathrm{Dom}((-A)^{\frac{1}{2}})}^2.
\end{align}

2) Let $k,l \in \mathbb N$ be arbitrary. It is clear that $F_{k,l} \in \mathcal L (V)$ and similarly as above it is possible to verify that $F_{k,l} = F_{k,l}^*$ and that \eqref{FklxAx} holds true for any $x \in \mathrm{Dom}(\mathcal A)$.
\end{proof}

Choose $0 \neq z_1 \in \mathrm{Dom}((-A)^{\frac{1}{2}})$ taking the form
\begin{equation}
z_1 = \sum_{k=1}^{\infty} \left\langle z_1, f_k \right\rangle_{\mathrm{Dom}((-A)^{\frac{1}{2}})} f_k = \sum_{k=1}^{\infty} z_{1,k} f_k,
\end{equation}
that is, $\{ z_{1,k}, k \in \mathbb N \}$ is the set of coordinates of the element $z_1$ with respect to the orthonormal basis in $\mathrm{Dom}((-A)^{\frac{1}{2}})$. Finally, define the operator $F : V \rightarrow V$ by
\begin{equation} \label{definition of F}
F = \sum_{k=1}^{\infty} z_{1,k}^2 F_k + \sum_{k=1}^{\infty} \sum_{l=k+1}^{\infty} z_{1,k} z_{1,l} F_{k,l}.
\end{equation}

The properties of the operator $F$ are summarized in the following Lemma.

\begin{lemma} \label{properties of F}
The operator $F \in \mathcal L(V)$. Moreover, it is self--adjoint and
\begin{equation} \label{FxAx}
\left\langle F x, \mathcal A x \right\rangle_V = -2ab \left\langle x_1, z_1 \right\rangle_{\mathrm{Dom}((-A)^{\frac{1}{2}})}^2, \quad \forall x = \left( \begin{array}{c} x_1 \\ x_2 \end{array} \right) \in \mathrm{Dom}(\mathcal A).
\end{equation}
\end{lemma}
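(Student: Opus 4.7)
The proof will mirror that of Lemma~\ref{properties of E} almost verbatim, so the plan is to verify the three claims (boundedness, self-adjointness, and the key identity~\eqref{FxAx}) in the same order, reusing the identities \eqref{FkxAx} and \eqref{FklxAx} at the end. Throughout, the key ingredients are the uniform boundedness of the individual summands, the denominator estimates for $D_{k,l}$ already noted in the proof of Lemma~\ref{properties of E}, and the summability $\sum_k z_{1,k}^2 = \|z_1\|^2_{\mathrm{Dom}((-A)^{\frac{1}{2}})} < \infty$.

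For boundedness of the first series in \eqref{definition of F}, I would observe that the scalar coefficients $b + 4a^2/\alpha_k$, $2a/\alpha_k$, $2a$, and $1$ appearing in the blocks of $F_k$ are all bounded uniformly in $k$ (since $\alpha_k \to \infty$, the terms $4a^2/\alpha_k$ and $2a/\alpha_k$ even tend to zero). Therefore there is a constant $C>0$ such that $\|F_k\|_{\mathcal L(V)} \leq C$ for every $k$, and the series $\sum_k z_{1,k}^2 F_k$ converges absolutely in $\mathcal L(V)$ by the finiteness of $\sum_k z_{1,k}^2$.

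For the double series $\sum_{k<l} z_{1,k} z_{1,l} F_{k,l}$ the computation is slightly more involved because of the factors $\sqrt{\alpha_l/\alpha_k}$, $\sqrt{\alpha_k/\alpha_l}$ and $\sqrt{\alpha_k \alpha_l}$ appearing in the blocks $F_{k,l,i}$. The plan is to split each block into the "diffusive" part $8a^2(\cdots)/D_{k,l}$ and the "dispersive" part $b(\alpha_k\pm\alpha_l)(\cdots)/D_{k,l}$, and then to dominate each resulting ratio using exactly the three inequalities already derived in the proof of Lemma~\ref{properties of E}, namely $(\alpha_k+\alpha_l)/D_{k,l}\le 1/(8a)$, $\sqrt{\alpha_k}|\alpha_k-\alpha_l|/D_{k,l} \le 1/(8a^2 b)$ (applied in symmetric form) and $\sqrt{\alpha_k\alpha_l}/D_{k,l}\le 1/(8a^2)$. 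After taking the $V$--inner product against $x = (x_1,x_2)^{\top}\in V$, which brings in an extra $\alpha_k$ on the first component via the $\mathrm{Dom}((-A)^{\frac{1}{2}})$ inner product, the resulting scalar series is controlled by a product of the square-summable sequences $(z_{1,k})$, $(\sqrt{\alpha_k}\,\langle x_1,e_k\rangle) = (\langle x_1,f_k\rangle_{\mathrm{Dom}((-A)^{\frac{1}{2}})})$, and $(\langle x_2,e_k\rangle_{L^2(D)})$, after a Cauchy--Schwarz argument. This will be the main obstacle: bookkeeping the $\sqrt{\alpha_k/\alpha_l}$ factors so that each matrix entry can be paired off in a way that all three denominator estimates apply with matching numerators.

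Self--adjointness is then immediate: both $F_k$ and $F_{k,l}$ are self--adjoint (verified by the same direct computation used for $E_k$ in the previous Lemma), and $F$ is an operator--norm limit of self--adjoint finite linear combinations. Finally, for the identity \eqref{FxAx} I would compute, exactly as in the $E$--case,
\begin{align}
\langle F x,\mathcal A x\rangle_V
&= \sum_{k=1}^{\infty} z_{1,k}^2 \langle F_k x,\mathcal A x\rangle_V + \sum_{k=1}^{\infty}\sum_{l=k+1}^{\infty} z_{1,k} z_{1,l} \langle F_{k,l} x,\mathcal A x\rangle_V \notag\\
&= -2ab\Bigl(\sum_{k=1}^{\infty} z_{1,k}^2 \langle x_1,f_k\rangle^2_{\mathrm{Dom}((-A)^{\frac{1}{2}})} + 2\sum_{k=1}^{\infty}\sum_{l=k+1}^{\infty} z_{1,k} z_{1,l} \langle x_1,f_k\rangle \langle x_1,f_l\rangle\Bigr) \notag\\
&= -2ab\Bigl(\sum_{k=1}^{\infty} z_{1,k} \langle x_1,f_k\rangle_{\mathrm{Dom}((-A)^{\frac{1}{2}})}\Bigr)^2
= -2ab\,\langle x_1,z_1\rangle^2_{\mathrm{Dom}((-A)^{\frac{1}{2}})},\notag
\end{align}
using \eqref{FkxAx} and \eqref{FklxAx} in the middle step and the expansion $z_1 = \sum_k z_{1,k} f_k$ at the end. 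This completes the proof outline.
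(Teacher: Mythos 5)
Your proposal is correct and follows essentially the same route as the paper, which itself only says that boundedness follows from the inequalities in the proof of Lemma \ref{properties of E}, that self--adjointness is inherited from the summands, and that \eqref{FxAx} follows from \eqref{FkxAx} and \eqref{FklxAx} exactly as in the $E$--case; your final telescoping computation for \eqref{FxAx} matches the paper's argument verbatim. If anything, your outline is more explicit than the paper's proof about where the $\sqrt{\alpha_k/\alpha_l}$ factors must be absorbed by the $D_{k,l}$ estimates.
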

\begin{proof}
Using inequalities in the proof of Lemma \ref{properties of E}, it is possible to verify that $F \in \mathcal L(V)$. Moreover, the linear combination of the self--adjoint operators is also the self--adjoint operator, hence $F = F^*$.

The property \eqref{FxAx} comes from \eqref{FkxAx} and \eqref{FklxAx}. The proof is analogous to the proof of Lemma \ref{properties of E}.
\end{proof}

We will also need an alternative representation for the process \\ $\frac{1}{T} \int_0^T \left\langle X_1^{x_0}(t), z_1 \right\rangle_{\mathrm{Dom}((-A)^{\frac{1}{2}})}^2 \, dt$.

\begin{lemma} \label{ito for z1-lemma}
The process $\frac{1}{T} \int_0^T \left\langle X_1^{x_0}(t), z_1 \right\rangle_{\mathrm{Dom}((-A)^{\frac{1}{2}})}^2 \, dt$ admits the following representation
\begin{align}
\frac{1}{T} \int_0^T \left\langle X_1^{x_0}(t), z_1 \right\rangle_{\mathrm{Dom}((-A)^{\frac{1}{2}})}^2 \, dt &= - \frac{1}{4abT} \left( \left\langle F X^{x_0}(T), X^{x_0}(T) \right\rangle_V - \left\langle F x_0, x_0 \right\rangle_V \right) \notag \\
&\phantom{=} + \frac{1}{2abT} \int_0^T \left\langle F X^{x_0}(t), \Phi \, dB(t) \right\rangle_V \notag \\
&\phantom{=} + \frac{1}{4ab} \left\langle Qz_1, z_1 \right\rangle_{\mathrm{Dom}((-A)^{\frac{1}{2}})}. \label{ito for z1}
\end{align}
\end{lemma}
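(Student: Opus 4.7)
The proof will proceed in exact parallel with Lemma~\ref{ito for z2-lemma}, with the operator $E$ replaced by $F$ and the role of the second coordinate replaced by the first. Define $g : V \to \mathbb R$ by $g(x) = \langle Fx, x \rangle_V$. As in the earlier proof, $g(X^{x_0}(t))$ is not directly accessible to It\^o's formula because $X^{x_0}$ is only a mild solution, so I would again introduce the Galerkin--type projections $X^{x_0,N}(t) = P_N X^{x_0}(t)$ onto $\mathrm{span}\{h_1,\dots,h_N\}$ for an orthonormal basis of $V$ contained in $\mathrm{Dom}(\mathcal A)$, apply It\^o to $g(X^{x_0,N}(t))$, and pass to the limit at the end.

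It\^o's formula applied to $g(X^{x_0,N}(t))$ yields
\begin{equation*}
dg(X^{x_0,N}(t)) = 2\langle F X^{x_0,N}(t), \mathcal A X^{x_0,N}(t)\rangle_V\,dt + 2\langle F X^{x_0,N}(t), \Phi\,dB(t)\rangle_V + \mathrm{Tr}(F\Phi\Phi^*)\,dt.
\end{equation*}
For the drift, Lemma~\ref{properties of F} gives $\langle F X^{x_0,N}(t), \mathcal A X^{x_0,N}(t)\rangle_V = -2ab\,\langle X_1^{x_0,N}(t), z_1\rangle^2_{\mathrm{Dom}((-A)^{1/2})}$; this is the reason why an extra factor of $b$ appears in the denominators on the right-hand side of \eqref{ito for z1}, as compared to \eqref{ito for z2}.

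The step I expect to require most care is the trace computation. Using the block form of $F$ and $\Phi\Phi^* = \mathrm{diag}(0,Q)$, only the $(2,2)$-block of $F$ contributes, so
\begin{equation*}
\mathrm{Tr}(F\Phi\Phi^*) = \sum_{k=1}^\infty z_{1,k}^2\,\mathrm{Tr}(F_{k,4}Q) + \sum_{k=1}^\infty \sum_{l=k+1}^\infty \frac{z_{1,k}z_{1,l}}{D_{k,l}}\,\mathrm{Tr}(F_{k,l,4}Q).
\end{equation*}
A direct evaluation on the orthonormal basis $\{e_j\}$ shows $\mathrm{Tr}(F_{k,4}Q) = \sum_j \lambda_j \langle e_j, e_k\rangle^2_{L^2(D)} = \lambda_k$, while $F_{k,l,4}$ is purely off-diagonal (it maps $e_k \mapsto e_l$ and $e_l \mapsto e_k$ with $k\neq l$), so $\mathrm{Tr}(F_{k,l,4}Q)=0$. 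Thus $\mathrm{Tr}(F\Phi\Phi^*) = \sum_k \lambda_k z_{1,k}^2 = \langle Q z_1, z_1\rangle_{\mathrm{Dom}((-A)^{1/2})}$, which is exactly the constant term needed in \eqref{ito for z1}.

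Combining these ingredients and integrating over $(0,T)$ yields the finite-dimensional analogue of \eqref{ito for z1} with $X^{x_0,N}$ in place of $X^{x_0}$. To pass to the limit as $N\to\infty$, I would use the same arguments as in Lemma~\ref{ito for z2-lemma}: dominated convergence for the integrand on the left-hand side with majorant $\|X_1^{x_0}(t)\|^2_{\mathrm{Dom}((-A)^{1/2})}\,\|z_1\|^2_{\mathrm{Dom}((-A)^{1/2})}$, together with the It\^o isometry estimate
\begin{equation*}
\mathbb E\left|\int_0^T \langle F(X^{x_0,N}(t) - X^{x_0}(t)), \Phi\,dB(t)\rangle_V\right|^2 \leq C\int_0^T \mathbb E\|X^{x_0,N}(t) - X^{x_0}(t)\|_V^2\,dt \to 0
\end{equation*}
for the stochastic integral, using $F\in\mathcal L(V)$ from Lemma~\ref{properties of F}. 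Pointwise convergence of the boundary terms $\langle FX^{x_0,N}(T), X^{x_0,N}(T)\rangle_V$ and $\langle F x_0^N, x_0^N\rangle_V$ follows from continuity of $g$ and $L^2(\Omega;V)$-convergence of $X^{x_0,N}(T)$ to $X^{x_0}(T)$. This produces \eqref{ito for z1}.
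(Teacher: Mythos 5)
Your proposal is correct and follows essentially the same route as the paper's proof: the paper likewise defines $g_1(x)=\left\langle Fx,x\right\rangle_V$, applies It\^o's formula to finite--dimensional projections, uses Lemma \ref{properties of F} for the drift term, computes $\mathrm{Tr}(F\Phi\Phi^*)=\left\langle Qz_1,z_1\right\rangle_{\mathrm{Dom}((-A)^{\frac{1}{2}})}$ via $\mathrm{Tr}(F_{k,4}Q)=\lambda_k$ and $\mathrm{Tr}(F_{k,l,4}Q)=0$, and passes $N\to\infty$ exactly as in Lemma \ref{ito for z2-lemma}. No discrepancies to report.
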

\begin{proof}
Define the function $g_1 : V \rightarrow \mathbb R$ by
\begin{equation}
g_1(x) = \left\langle F x, x \right\rangle_V, \quad \forall x \in V.
\end{equation}

The application of It\^ o's formula to the function $g_1(X^{x_0, N}(t))$ (we also have to use suitable projections, see the proof of Lemma \ref{ito for z2-lemma}), yields
\begin{equation} \label{ito for z1-helpful}
dg_1(X^{x_0, N}(t)) = 2 \left\langle FX^{x_0, N}(t), dX^{x_0, N}(t) \right\rangle_V + \frac{1}{2} \, \mathrm{Tr} \left( 2F \Phi \Phi^* \right) \, dt.
\end{equation}

We also start with simplification of the second term
\begin{align}
\mathrm{Tr} \, (F \Phi \Phi^*) &= \mathrm{Tr} \, (F_4 Q) \notag \\
&= \sum_{k=1}^{\infty} z_{1,k}^2 \mathrm{Tr} \, (F_{k,4} Q) + \sum_{k=1}^{\infty} \sum_{l=k+1}^{\infty} \frac{z_{1,k} z_{1,l}}{D_{k,l}} \mathrm{Tr} \, (F_{k,l,4} Q) \notag \\
&= \left\langle Q z_1, z_1 \right\rangle_{\mathrm{Dom}((-A)^{\frac{1}{2}})},
\end{align}
since $\mathrm{Tr} \, (F_{k,4} Q) = \lambda_k$, $\mathrm{Tr} \, (F_{k,l,4} Q) = 0$, because $k \neq l$, and
\begin{equation}
\sum_{k=1}^{\infty} \lambda_k z_{1,k}^2 = \left\langle Q z_1, z_1 \right\rangle_{\mathrm{Dom}((-A)^{\frac{1}{2}})}.
\end{equation}

Lemma \ref{properties of F} and the formula \eqref{ito for z1-helpful} imply
\begin{align}
dg_1(X^{x_0, N}(t)) &= 2 \left\langle F X^{x_0, N}(t), \mathcal A X^{x_0, N}(t) \right\rangle_V \, dt + 2 \left\langle F X^{x_0, N}(t), \Phi \, dB(t) \right\rangle_V \notag \\
&\phantom{=}+ \left\langle Q z_1, z_1 \right\rangle_{\mathrm{Dom}((-A)^{\frac{1}{2}})} \, dt \notag \\
&= -4ab \left\langle X_1^{x_0, N}(t), z_1 \right\rangle_{\mathrm{Dom}((-A)^{\frac{1}{2}})}^2 \, dt + 2 \left\langle F X^{x_0, N}(t), \Phi \, dB(t) \right\rangle_V \notag \\
&\phantom{=} + \left\langle Q z_1, z_1 \right\rangle_{\mathrm{Dom}((-A)^{\frac{1}{2}})} \, dt. \notag
\end{align}

By integrating the above formula over the interval $(0,T)$ and passing $N$ to infinity, we arrive at \eqref{ito for z1}.
\end{proof}

Denote
\begin{equation}
Q_1 := \left\langle Q z_1, z_1 \right\rangle_{\mathrm{Dom}((-A)^{\frac{1}{2}})}, \quad Q_2 := \left\langle Q z_2, z_2 \right\rangle_{L^2(D)}.
\end{equation}
Asymptotic normality of the estimator $\bar{b}_{T, z_1, z_2}$ is formulated in the following Theorem.

\begin{theorem} \label{asymptotic normality of bbar}
Let $0 \neq z_1 \in \mathrm{Dom}((-A)^{\frac{1}{2}})$, $0 \neq z_2 \in L^2(D)$ be arbitrary. The estimator $\bar{b}_{T, z_1, z_2}$ is asymp\-to\-ti\-cally normal, i.e., $\mathrm{Law} \left( \sqrt{T} \left( \bar{b}_{T, z_1, z_2} - b \right) \right)$ converges weakly to a centered Gaussian distribution with variance given by
\begin{align}
&\frac{64a^3 b}{Q_1^2} \sum_{k=1}^{\infty} \sum_{n=1}^{\infty} \frac{\lambda_k \lambda_n z_{1,k}^2 z_{1,n}^2}{b(\alpha_k - \alpha_n)^2 + 8a^2 (\alpha_k + \alpha_n)} \label{limiting variance of bbarz1z2} \\
&+ \frac{8a b^2}{Q_1^2 Q_2^2} \sum_{k=1}^{\infty} \sum_{n=1}^{\infty} \frac{\lambda_k \lambda_n}{b(\alpha_k - \alpha_n)^2 + 8a^2 (\alpha_k + \alpha_n)} \left( \left( Q_1 z_{2,k} z_{2,n} \sqrt{\alpha_k} - Q_2 z_{1,k} z_{1,n} \sqrt{\alpha_n} \right)^2 \right. \notag \\
&+ \left. \left( Q_1 z_{2,k} z_{2,n} \sqrt{\alpha_n} - Q_2 z_{1,k} z_{1,n} \sqrt{\alpha_k} \right)^2 \right). \notag
\end{align}
\end{theorem}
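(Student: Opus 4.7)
The plan is to reduce $\sqrt{T}\,(\bar b_{T, z_1, z_2} - b)$ to a single stochastic integral and apply the martingale central limit theorem, in parallel with the proof of Theorem~\ref{asymptotic normality of abar}, but now combining Lemma~\ref{ito for z2-lemma} with Lemma~\ref{ito for z1-lemma}. Setting
$$I_{1,T} := \frac{1}{T}\int_0^T \langle X_1^{x_0}(t), z_1\rangle_{\mathrm{Dom}((-A)^{\frac{1}{2}})}^2\,dt, \qquad I_{2,T} := \frac{1}{T}\int_0^T \langle X_2^{x_0}(t), z_2\rangle_{L^2(D)}^2\,dt,$$
the estimator reads $\bar b_{T, z_1, z_2} = (Q_1/Q_2)(I_{2,T}/I_{1,T})$, and the a.s.\ limits $I_{1,T}\to Q_1/(4ab)$, $I_{2,T}\to Q_2/(4a)$ suggest the factorization
$$\sqrt{T}\,\bigl(\bar b_{T, z_1, z_2} - b\bigr) \;=\; \frac{Q_1}{Q_2\, I_{1,T}}\,\sqrt{T}\,\left(I_{2,T} - \frac{b\, Q_2}{Q_1}\, I_{1,T}\right),$$
so attention moves to the rescaled difference on the right.

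I would then insert the representations~\eqref{ito for z2} and~\eqref{ito for z1} into this difference. By design of the coefficient $bQ_2/Q_1$, the deterministic constants $Q_2/(4a)$ and $Q_1/(4ab)$ cancel exactly. The boundary terms of the form $\tfrac{1}{T}\langle E X^{x_0}(T), X^{x_0}(T)\rangle_V$, $\tfrac{1}{T}\langle F X^{x_0}(T), X^{x_0}(T)\rangle_V$ and their $x_0$ counterparts, once multiplied by $\sqrt T$, are of order $T^{-1/2}$ and vanish in $L^1(\Omega)$ by Lemma~\ref{convergence to zero}. What remains is the single martingale
$$M_T \;:=\; \frac{1}{\sqrt T}\int_0^T \bigl\langle G\, X^{x_0}(t), \Phi\, dB(t)\bigr\rangle_V, \qquad G \;:=\; \frac{1}{2a}\,E \;-\; \frac{Q_2}{2a\,Q_1}\,F,$$
with $G \in \mathcal L(V)$ self--adjoint by Lemmas~\ref{properties of E} and~\ref{properties of F}.

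Next comes the martingale CLT, invoked exactly as in Theorem~\ref{asymptotic normality of abar}: expanding the cylindrical noise in the $L^2(D)$--basis $\{e_n\}$, one needs the $\mathbb P$--a.s.\ value of
$$\lim_{T\to\infty}\,\frac{1}{T}\int_0^T \sum_{n=1}^{\infty} \lambda_n\,\bigl\langle G\, X^{x_0}(t), (0, e_n)^{\top}\bigr\rangle_V^2\,dt.$$
Unfolding $G$ through the block operators $E_k, E_{k,l}, F_k, F_{k,l}$ produces, after squaring, the analogues of the summands $(I)$--$(V)$ from the previous proof together with their $F$--type counterparts and the $EF$ cross--products. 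Every off--diagonal time average vanishes a.s.\ by the three parts of Lemma~\ref{cross terms go to zero}, and each surviving diagonal term is evaluated using the one--window ergodic limits $\tfrac{1}{T}\int_0^T \langle X_2^{x_0}(t), e_k\rangle^2\,dt \to \lambda_k/(4a)$ and $\tfrac{1}{T}\int_0^T \langle X_1^{x_0}(t), f_l\rangle^2\,dt \to \lambda_l/(4ab)$.

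Assembling the surviving diagonal contributions using $D_{k,n} = b(\alpha_k-\alpha_n)^2 + 8a^2(\alpha_k+\alpha_n)$ and the symmetrization $2\sum_{k}\sum_{l>k} a_{k,l} = \sum_{k\neq l} a_{k,l}$ for symmetric $a_{k,l}$, and finally multiplying by the squared asymptotic prefactor $(Q_1/(Q_2\, I_{1,T}))^2 \to (4ab/Q_2)^2$, should reproduce the formula~\eqref{limiting variance of bbarz1z2}. The main obstacle is the sheer volume of algebraic bookkeeping: $G$ contributes three families of squared diagonals (pure $E$--type, pure $F$--type, and mixed $EF$--type), and the extra $\sqrt{\alpha_k/\alpha_l}$ factors carried by the $F_{k,l}$--blocks---which arise because $z_1\in\mathrm{Dom}((-A)^{\frac{1}{2}})$ is expanded in $\{f_k\}=\{e_k/\sqrt{\alpha_k}\}$ rather than in $\{e_k\}$---are precisely what gives rise to the cross combinations $Q_1 z_{2,k}z_{2,n}\sqrt{\alpha_k} - Q_2 z_{1,k}z_{1,n}\sqrt{\alpha_n}$ appearing in the second double sum of~\eqref{limiting variance of bbarz1z2}.
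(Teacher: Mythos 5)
Your proposal follows essentially the same route as the paper's proof: the same factorization of $\sqrt{T}(\bar b_{T,z_1,z_2}-b)$, cancellation of the deterministic parts of the two It\^o representations, disposal of the boundary terms via Lemma \ref{convergence to zero}, and the martingale CLT applied to the single stochastic integral driven by $Q_1E-Q_2F$ (your $G$ is just $\tfrac{1}{2aQ_1}(Q_1E-Q_2F)$ with the constants regrouped into the prefactor), with cross terms killed by Lemma \ref{cross terms go to zero} and diagonal terms evaluated by the ergodic limits. The structure and all key steps match; only the explicit diagonal-term bookkeeping, which you correctly identify and whose outcome you correctly anticipate, is left unexecuted.
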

\begin{proof}
Set $0 \neq z_1 \in \mathrm{Dom}((-A)^{\frac{1}{2}})$ and $0 \neq z_2 \in L^2(D)$. Using formula \eqref{bbar} for the estimator $\bar{b}_{T, z_1, z_2}$ and Lemmas \ref{ito for z2-lemma} and \ref{ito for z1-lemma}, we obtain
\begin{align}
&\sqrt{T} \left( \bar{b}_{T, z_1, z_2} - b \right) = \label{asymptotics for bbarz1z2} \\
&= \frac{\sqrt{T}}{Q_2 \frac{1}{T} \int_0^T \left\langle X_1^{x_0}(t), z_1 \right\rangle_{\mathrm{Dom}((-A)^{\frac{1}{2}})}^2 \, dt} \left( \frac{Q_1}{T} \int_0^T \left\langle X_2^{x_0}(t), z_2 \right\rangle_{L^2(D)}^2 \, dt \right. \notag \\
&\phantom{=} \left. - \frac{b Q_2}{T} \int_0^T \left\langle X_1^{x_0}(t), z_1 \right\rangle_{\mathrm{Dom}((-A)^{\frac{1}{2}})}^2 \, dt \right) \notag \\
&= \frac{\sqrt{T}}{Q_2 \frac{1}{T} \int_0^T \left\langle X_1^{x_0}(t), z_1 \right\rangle_{\mathrm{Dom}((-A)^{\frac{1}{2}})}^2 \, dt} \left( - \frac{Q_1}{4aT} \left( \left\langle E X^{x_0}(T), X^{x_0}(T) \right\rangle_V \right. \right. \notag \\
&\phantom{=} \left. - \left\langle E x_0, x_0 \right\rangle_V \right) + \frac{Q_1}{2aT} \int_0^T \left\langle E X^{x_0}(t), \Phi \, dB(t) \right\rangle_V +  \notag \\
&\phantom{=} \left. + \frac{Q_2}{4aT} \left( \left\langle F X^{x_0}(T), X^{x_0}(T) \right\rangle_V - \left\langle F x_0, x_0 \right\rangle_V \right) - \frac{Q_2}{2aT} \int_0^T \left\langle F X^{x_0}(t), \Phi \, dB(t) \right\rangle_V \right) \notag \\
&= - \frac{Q_1}{4a Q_2 \frac{1}{T} \int_0^T \left\langle X_1^{x_0}(t), z_1 \right\rangle_{\mathrm{Dom}((-A)^{\frac{1}{2}})}^2 \, dt} \frac{1}{\sqrt{T}} \left( \left\langle E X^{x_0}(T), X^{x_0}(T) \right\rangle_V - \left\langle E x_0, x_0 \right\rangle_V \right) \notag \\
&\phantom{=} + \frac{1}{4a \frac{1}{T} \int_0^T \left\langle X_1^{x_0}(t), z_1 \right\rangle_{\mathrm{Dom}((-A)^{\frac{1}{2}})}^2 \, dt} \frac{1}{\sqrt{T}} \left( \left\langle F X^{x_0}(T), X^{x_0}(T) \right\rangle_V - \left\langle F x_0, x_0 \right\rangle_V \right) \notag \\
&\phantom{=} + \frac{1}{2a Q_2 \frac{1}{T} \int_0^T \left\langle X_1^{x_0}(t), z_1 \right\rangle_{\mathrm{Dom}((-A)^{\frac{1}{2}})}^2 \, dt} \frac{1}{\sqrt{T}} \int_0^T \left\langle (Q_1 E - Q_2 F) X^{x_0}(t), \Phi \, dB(t) \right\rangle_V. \notag
\end{align}

The first two terms converge to zero in probability as $T \rightarrow \infty$, since
\begin{equation}
\lim_{T \rightarrow \infty} \frac{1}{T} \int_0^T \left\langle X_1^{x_0}(t), z_1 \right\rangle_{\mathrm{Dom}((-A)^{\frac{1}{2}})}^2 \, dt = \frac{Q_1}{4ab}, \quad \mathbb P-a.s.
\end{equation}
by Theorem \ref{abarbbar - general case} and
\begin{align}
&\lim_{T \rightarrow \infty} \frac{1}{\sqrt{T}} \left( \left\langle E X^{x_0}(T), X^{x_0}(T) \right\rangle_V - \left\langle E x_0, x_0 \right\rangle_V \right) = 0, \quad \mathrm{in} \, \, L^1(\Omega), \notag \\
&\lim_{T \rightarrow \infty} \frac{1}{\sqrt{T}} \left( \left\langle F X^{x_0}(T), X^{x_0}(T) \right\rangle_V - \left\langle F x_0, x_0 \right\rangle_V \right) = 0, \quad \mathrm{in} \, \, L^1(\Omega),
\end{align}
by Lemma \ref{convergence to zero}. Define
\begin{align}
w_1(T) &= \frac{1}{\sqrt{T}} \int_0^T \left\langle (Q_1 E - Q_2 F) X^{x_0}(t), \Phi \, dB(t) \right\rangle_V \notag \\
&= \frac{1}{\sqrt{T}} \int_0^T \sum_{n=1}^{\infty} \sqrt{\lambda_n} \left\langle (Q_1 E - Q_2 F) X^{x_0}(t), \left( \begin{array}{c} 0 \\ e_n \end{array} \right) \right\rangle_V \, d\beta_n (t), \notag
\end{align}
where we have used the representation of $V$--valued Brownian motion $B(t)$.

Furthermore, we compute the scalar product in the above series
\begin{align}
&\left\langle (Q_1 E - Q_2 F) X^{x_0}(t), \left( \begin{array}{c} 0 \\ e_n \end{array} \right) \right\rangle_V = \notag \\
&= \left\langle (Q_1 E_3 - Q_2 F_3) X_1^{x_0}(t), e_n \right\rangle_{L^2(D)} + \left\langle (Q_1 E_4 - Q_2 F_4) X_2^{x_0}(t), e_n \right\rangle_{L^2(D)}. \notag
\end{align}

By the definition of the operators $E_3$, $F_3$, $E_4$, $F_4$, we have
\begin{align}
&\left\langle (Q_1 E_3 - Q_2 F_3) X_1^{x_0}(t), e_n \right\rangle_{L^2(D)} = \notag \\
&= \sum_{k=1}^{\infty} \sum_{l=k+1}^{\infty} \frac{4a}{D_{k,l}} \left( Q_1 z_{2,k} z_{2,l} b \alpha_l (\alpha_k - \alpha_l) - Q_2 z_{1,k} z_{1,l} \sqrt{\alpha_k \alpha_l} \left( 8a^2 + b (\alpha_k - \alpha_l) \right) \right) \times \notag \\
&\phantom{=} \left\langle X_1^{x_0}(t), e_l \right\rangle_{L^2(D)} \delta_{k,n} \notag \\
&\phantom{=} + \sum_{k=1}^{\infty} \sum_{l=k+1}^{\infty} \frac{4a}{D_{k,l}} \left( Q_1 z_{2,k} z_{2,l} b \alpha_k (\alpha_l - \alpha_k) - Q_2 z_{1,k} z_{1,l} \sqrt{\alpha_k \alpha_l} \left( 8a^2 + b (\alpha_l - \alpha_k) \right) \right) \times \notag \\
&\phantom{=} \left\langle X_1^{x_0}(t), e_k \right\rangle_{L^2(D)} \delta_{n,l} \notag \\
&\phantom{=} - \sum_{k=1}^{\infty} 2aQ_2 z_{1,k}^2 \left\langle X_1^{x_0}(t), e_k \right\rangle_{L^2(D)} \delta_{k,n} \notag \\
&= (I) + (II) + (III), \notag
\end{align}
\begin{align}
&\left\langle (Q_1 E_4 - Q_2 F_4) X_2^{x_0}(t), e_n \right\rangle_{L^2(D)} = \notag \\
&= \sum_{k=1}^{\infty} \left( Q_1 z_{2,k}^2 - Q_2 z_{1,k}^2 \right) \left\langle X_2^{x_0}(t), e_k \right\rangle_{L^2(D)} \delta_{k,n} \notag \\
&\phantom{=} + \sum_{k=1}^{\infty} \sum_{l=k+1}^{\infty} \frac{8a^2}{D_{k,l}} \left( Q_1 z_{2,k} z_{2,l} (\alpha_k + \alpha_l) - 2 Q_2 z_{1,k} z_{1,l} \sqrt{\alpha_k \alpha_l} \right) \left\langle X_2^{x_0}(t), e_k \right\rangle_{L^2(D)} \delta_{n,l} \notag \\
&\phantom{=} + \sum_{k=1}^{\infty} \sum_{l=k+1}^{\infty} \frac{8a^2}{D_{k,l}} \left( Q_1 z_{2,k} z_{2,l} (\alpha_k + \alpha_l) - 2 Q_2 z_{1,k} z_{1,l} \sqrt{\alpha_k \alpha_l} \right) \left\langle X_2^{x_0}(t), e_l \right\rangle_{L^2(D)} \delta_{k,n} \notag \\
&= (IV) + (V) + (VI). \notag
\end{align}

By the central limit theorem for martingales, $\mathrm{Law} \left( w_1(T) \right)$ converges weakly to a~Gaussian distribution with a zero mean and variance given by the $\mathbb P-a.s.$ limit
\begin{equation}
\lim_{T \rightarrow \infty} \frac{1}{T} \int_0^T \sum_{n=1}^{\infty} \lambda_n \left( (I) + \ldots + (VI) \right)^2 \, dt.
\end{equation}

The limits of the cross terms are zero due to Lemma \ref{cross terms go to zero} (see the proof of Theorem \ref{asymptotic normality of abar}). We compute the limits of the "diagonal" terms in the $\mathbb P-a.s.$ sense.

\begin{align}
(A) &= \lim_{T \rightarrow \infty} \frac{1}{T} \int_0^T \sum_{n=1}^{\infty} \lambda_n (I)^2 \, dt \notag \\
&= \lim_{T \rightarrow \infty} \frac{1}{T} \int_0^T \sum_{n=1}^{\infty} \lambda_n \left( \sum_{l=n+1}^{\infty} \frac{4a}{D_{n,l}} ( \cdots ) \left\langle X_1^{x_0}(t), e_l \right\rangle_{L^2(D)} \right)^2 \, dt \notag \\
&\stackrel{(*)}{=} \lim_{T \rightarrow \infty} \frac{1}{T} \int_0^T \sum_{n=1}^{\infty} \sum_{l=n+1}^{\infty} \lambda_n \frac{16a^2}{D_{n,l}^2} ( \cdots )^2 \left\langle X_1^{x_0}(t), e_l \right\rangle_{L^2(D)}^2 \, dt \notag \\
&= \sum_{n=1}^{\infty} \sum_{l=n+1}^{\infty} \frac{1}{D_{n,l}^2} \frac{4a \lambda_n \lambda_l}{b \alpha_l} \times \notag \\
&\phantom{=} \left( Q_1 z_{2,n} z_{2,l} b \alpha_l (\alpha_n - \alpha_l) - Q_2 z_{1,n} z_{1,l} \sqrt{\alpha_n \alpha_l} \left( 8a^2 + b (\alpha_n - \alpha_l) \right) \right)^2, \notag
\end{align}
since $\lim_{T \rightarrow \infty} \frac{1}{T} \int_0^T \left\langle X_1^{x_0}(t), e_l \right\rangle_{L^2(D)}^2 \, dt = \frac{\lambda_l}{4ab \alpha_l}$ and in the equality $(*)$ we have also used Lemma \ref{cross terms go to zero} for the cross summands. $( \cdots)$ stands for the bracket from the definition of $(I)$.

Similarly, we have
\begin{align}
(B) &= \lim_{T \rightarrow \infty} \frac{1}{T} \int_0^T \sum_{n=1}^{\infty} \lambda_n (II)^2 \, dt \notag \\
&= \lim_{T \rightarrow \infty} \frac{1}{T} \int_0^T \sum_{n=1}^{\infty} \lambda_n \left( \sum_{k=1}^{n-1} \frac{4a}{D_{k,n}} ( \cdots ) \left\langle X_1^{x_0}(t), e_k \right\rangle_{L^2(D)} \right)^2 \, dt \notag \\
&\stackrel{(*)}{=} \lim_{T \rightarrow \infty} \frac{1}{T} \int_0^T \sum_{n=1}^{\infty} \sum_{k=1}^{n-1} \lambda_n \frac{16a^2}{D_{k,n}^2} ( \cdots )^2 \left\langle X_1^{x_0}(t), e_k \right\rangle_{L^2(D)}^2 \, dt \notag \\
&= \sum_{n=1}^{\infty} \sum_{k=1}^{n-1} \frac{1}{D_{k,n}^2} \frac{4a \lambda_n \lambda_k}{b \alpha_k} \times \notag \\
&\phantom{=} \left( Q_1 z_{2,k} z_{2,n} b \alpha_k (\alpha_n - \alpha_k) - Q_2 z_{1,k} z_{1,n} \sqrt{\alpha_k \alpha_n} \left( 8a^2 + b (\alpha_n - \alpha_k) \right) \right)^2, \notag \\
(C) &= \lim_{T \rightarrow \infty} \frac{1}{T} \int_0^T \sum_{n=1}^{\infty} \lambda_n (III)^2 \, dt \notag \\
&= \lim_{T \rightarrow \infty} \frac{1}{T} \int_0^T \sum_{n=1}^{\infty} 4a^2 Q_2^2 \lambda_n z_{1,n}^4 \left\langle X_1^{x_0}(t), e_n \right\rangle_{L^2(D)}^2 \, dt \notag \\
&= \frac{a Q_2^2}{b} \sum_{n=1}^{\infty} \frac{\lambda_n^2}{\alpha_n} z_{1,n}^4, \notag \\
(D) &= \lim_{T \rightarrow \infty} \frac{1}{T} \int_0^T \sum_{n=1}^{\infty} \lambda_n (IV)^2 \, dt \notag \\
&= \lim_{T \rightarrow \infty} \frac{1}{T} \int_0^T \sum_{n=1}^{\infty} \lambda_n \left( Q_1 z_{2,n}^2 - Q_2 z_{1,n}^2 \right)^2 \left\langle X_2^{x_0}(t), e_n \right\rangle_{L^2(D)}^2 \, dt \notag \\
&= \frac{1}{4a} \sum_{n=1}^{\infty} \lambda_n^2 \left( Q_1 z_{2,n}^2 - Q_2 z_{1,n}^2 \right)^2,
\end{align}
since $\lim_{T \rightarrow \infty} \frac{1}{T} \int_0^T \left\langle X_2^{x_0}(t), e_n \right\rangle_{L^2(D)}^2 \, dt = \frac{\lambda_n}{4a}$.

Next, we have
\begin{align}
(E) &= \lim_{T \rightarrow \infty} \frac{1}{T} \int_0^T \sum_{n=1}^{\infty} \lambda_n (V)^2 \, dt \notag \\
&= \lim_{T \rightarrow \infty} \frac{1}{T} \int_0^T \sum_{n=1}^{\infty} \lambda_n \left( \sum_{k=1}^{n-1} \frac{8a^2}{D_{k,n}} ( \cdots ) \left\langle X_2^{x_0}(t), e_k \right\rangle_{L^2(D)} \right)^2 \, dt \notag \\
&\stackrel{(*)}{=} \lim_{T \rightarrow \infty} \frac{1}{T} \int_0^T \sum_{n=1}^{\infty} \sum_{k=1}^{n-1} \lambda_n \frac{64a^4}{D_{k,n}^2} ( \cdots )^2 \left\langle X_2^{x_0}(t), e_k \right\rangle_{L^2(D)}^2 \, dt \notag \\
&= \sum_{n=1}^{\infty} \sum_{k=1}^{n-1} \frac{1}{D_{k,n}^2} 16a^3 \lambda_n \lambda_k \left( Q_1 z_{2,k} z_{2,n} (\alpha_k + \alpha_n) - 2 Q_2 z_{1,k} z_{1,n} \sqrt{\alpha_k \alpha_n} \right)^2, \notag
\end{align}
\begin{align}
(F) &= \lim_{T \rightarrow \infty} \frac{1}{T} \int_0^T \sum_{n=1}^{\infty} \lambda_n (VI)^2 \, dt \notag \\
&= \lim_{T \rightarrow \infty} \frac{1}{T} \int_0^T \sum_{n=1}^{\infty} \lambda_n \left( \sum_{l=n+1}^{\infty} \frac{8a^2}{D_{n,l}} ( \cdots ) \left\langle X_2^{x_0}(t), e_l \right\rangle_{L^2(D)} \right)^2 \, dt \notag \\
&\stackrel{(*)}{=} \lim_{T \rightarrow \infty} \frac{1}{T} \int_0^T \sum_{n=1}^{\infty} \sum_{l=n+1}^{\infty} \lambda_n \frac{64a^4}{D_{n,l}^2} ( \cdots )^2 \left\langle X_2^{x_0}(t), e_l \right\rangle_{L^2(D)}^2 \, dt \notag \\
&= \sum_{n=1}^{\infty} \sum_{l=n+1}^{\infty} \frac{1}{D_{n,l}^2} 16a^3 \lambda_n \lambda_l \left( Q_1 z_{2,n} z_{2,l} (\alpha_n + \alpha_l) - 2 Q_2 z_{1,n} z_{1,l} \sqrt{\alpha_n \alpha_l} \right)^2. \notag
\end{align}

The limiting variance of $w_1(T)$ is the sum of the six above terms $(A) + \ldots + (F)$, which can be simplified (analogously as in the proof of Theorem \ref{asymptotic normality of abar}) to
\begin{align}
&\frac{16a^3 b Q_2^2}{b} \sum_{k=1}^{\infty} \sum_{n=1}^{\infty} \frac{\lambda_k \lambda_n z_{1,k}^2 z_{1,n}^2}{D_{k,n}} \notag \\
&+ 2a \sum_{k=1}^{\infty} \sum_{n=1}^{\infty} \frac{\lambda_k \lambda_n}{D_{k,n}} \left( \left( Q_1 z_{2,k} z_{2,n} \sqrt{\alpha_k} - Q_2 z_{1,k} z_{1,n} \sqrt{\alpha_n} \right)^2 \right. \notag \\
& \left. + \left( Q_1 z_{2,k} z_{2,n} \sqrt{\alpha_n} - Q_2 z_{1,k} z_{1,n} \sqrt{\alpha_k} \right)^2 \right).
\end{align}

Since the multiplicative factor $\frac{1}{2aQ_2 \frac{1}{T} \int_0^T \left\langle X_1^{x_0}(t), z_1 \right\rangle_{\mathrm{Dom}((-A)^{\frac{1}{2}})}^2 \, dt}$ of $w_1(T)$ on the right--hand side of \eqref{asymptotics for bbarz1z2} converges to $\frac{2b}{Q_1 Q_2}$ as $T \rightarrow \infty$, we arrive at \eqref{limiting variance of bbarz1z2}.
\end{proof}

\subsection{Asymptotic normality of the estimator $\bar{b}_{T, z_1, a}$}
Based on the estimation strategy 2., another estimator of the parameter $b$ may be introduced. Using "observation window" $(z_1, 0)^{\top}$, $z_1 \neq 0$, with the parameter $a$ known, we may define
\begin{equation} \label{bbarz1a}
\bar{b}_{T, z_1, a} = \frac{\left\langle Q z_1, z_1 \right\rangle_{\mathrm{Dom}((-A)^{\frac{1}{2}})}}{4a \frac{1}{T} \int_0^T \left\langle X_1^{x_0}(t), z_1 \right\rangle_{\mathrm{Dom}((-A)^{\frac{1}{2}})}^2 \, dt}.
\end{equation}

This estimator $\bar{b}_{T, z_1, a}$ is also strongly consistent as $T \rightarrow \infty$ by Theorem \ref{abarbbar - general case} and we show its asymptotic normality.

\begin{theorem} \label{asymptotic normality of bbar - z1a}
Let $0 \neq z_1 \in \mathrm{Dom}((-A)^{\frac{1}{2}})$, $a > 0$ be arbitrary. The estimator $\bar{b}_{T, z_1, a}$ is asymp\-to\-ti\-cally normal, i.e., $\mathrm{Law} \left( \sqrt{T} \left( \bar{b}_{T, z_1, a} - b \right) \right)$ converges weakly to a~centered Gaussian distribution with variance given by
\begin{align}
&\frac{64a^3 b}{Q_1^2} \sum_{k=1}^{\infty} \sum_{n=1}^{\infty} \frac{\lambda_k \lambda_n z_{1,k}^2 z_{1,n}^2}{b(\alpha_k - \alpha_n)^2 + 8a^2 (\alpha_k + \alpha_n)} \notag \\
&\phantom{=}+ \frac{8a b^2}{Q_1^2} \sum_{k=1}^{\infty} \sum_{n=1}^{\infty} \frac{\lambda_k \lambda_n (\alpha_k + \alpha_n) z_{1,k}^2 z_{1,n}^2}{b(\alpha_k - \alpha_n)^2 + 8a^2 (\alpha_k + \alpha_n)}. \label{limiting variance of bbarz1a}
\end{align}
\end{theorem}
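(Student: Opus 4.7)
The proof will parallel that of Theorem \ref{asymptotic normality of bbar} but is considerably simpler because only one observation window $z_1$ is present and the parameter $a$ is treated as known. The plan is to isolate a martingale term via the It\^o representation from Lemma \ref{ito for z1-lemma}, apply the martingale CLT, and compute the limiting quadratic variation using the auxiliary results already developed.

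First, I would write $\sqrt{T}(\bar{b}_{T, z_1, a} - b) = \sqrt{T}(Q_1 - 4ab J_1(T))/(4a J_1(T))$, where $J_1(T) = \frac{1}{T}\int_0^T \langle X_1^{x_0}(t), z_1\rangle_{\mathrm{Dom}((-A)^{1/2})}^2 \, dt$. Substituting the representation \eqref{ito for z1} for $J_1(T)$ produces two boundary terms of the form $\frac{1}{\sqrt{T}}(\langle F X^{x_0}(T), X^{x_0}(T)\rangle_V - \langle F x_0, x_0\rangle_V)$, which vanish in $L^1(\Omega)$ by Lemma \ref{convergence to zero}, together with a stochastic integral term involving the operator $F$. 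Since $J_1(T) \to Q_1/(4ab)$ $\mathbb P$-a.s.\ by Theorem \ref{abarbbar - general case}, Slutsky's theorem reduces the problem to identifying the weak limit of
$$w_1(T) := \frac{1}{\sqrt{T}}\int_0^T \langle F X^{x_0}(t), \Phi\, dB(t)\rangle_V,$$
scaled by the a.s.\ limit $-2b/Q_1$ of the multiplicative prefactor.

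Next, I would apply the central limit theorem for martingales (as in \cite{kutoyants}, Proposition 1.22) to conclude that $\mathrm{Law}(w_1(T))$ converges weakly to a centered Gaussian whose variance is the $\mathbb P$-a.s.\ limit of the quadratic variation $\frac{1}{T}\int_0^T \sum_n \lambda_n \langle F X^{x_0}(t), (0, e_n)^\top\rangle_V^2\, dt$. To compute it, decompose $\langle F X^{x_0}(t), (0, e_n)^\top\rangle_V = \langle F_3 X_1^{x_0}(t), e_n\rangle_{L^2(D)} + \langle F_4 X_2^{x_0}(t), e_n\rangle_{L^2(D)}$, use the operator expansion \eqref{definition of F} for $F$, and discard cross-component and distinct-index interactions by Lemma \ref{cross terms go to zero}. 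The surviving diagonal contributions are then evaluated via the ergodic limits $\frac{1}{T}\int_0^T \langle X_1^{x_0}(t), e_k\rangle^2\, dt \to \lambda_k/(4ab\alpha_k)$ and $\frac{1}{T}\int_0^T \langle X_2^{x_0}(t), e_k\rangle^2\, dt \to \lambda_k/(4a)$ (special cases of Theorem \ref{abarbbar - general case}).

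Finally, after the same combinatorial reshuffling used in the proof of Theorem \ref{asymptotic normality of bbar} (swapping summation order in the $k < l$ sums and exploiting the symmetry $\sum_{k<l} a_{k,n} = \tfrac{1}{2}\sum_{k\neq n} a_{k,n}$ for symmetric $a_{k,n}$), the diagonal terms combine into the two sums in \eqref{limiting variance of bbarz1a}. Multiplying by $(2b/Q_1)^2 = 4b^2/Q_1^2$ produced by the prefactor yields the stated variance. The main piece of actual work is the bookkeeping in the decomposition of the bracket process, but all the operator identities, ergodic limits, and cross-term-vanishing results are already available; indeed, the present statement is the formal specialization $z_{2,k}\equiv 0$, $Q_2 \to 0$ of Theorem \ref{asymptotic normality of bbar}, which serves as a useful consistency check on the resulting formula.
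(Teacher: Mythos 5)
Your proposal is correct and follows essentially the same route as the paper's proof: the It\^o representation from Lemma \ref{ito for z1-lemma}, Lemma \ref{convergence to zero} for the boundary term, the martingale CLT with the quadratic variation computed through the $F_3$, $F_4$ decomposition, Lemma \ref{cross terms go to zero} for the cross terms, and the final scaling by $4b^2/Q_1^2$. The paper likewise exploits the link to the previous theorem (it treats $w_2(T)$ as the case $E=0$, $Q_2=1$ of $w_1(T)$), so even your consistency check mirrors the published argument.
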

\begin{proof}
Set $0 \neq z_1 \in \mathrm{Dom}((-A)^{\frac{1}{2}})$. Using formula \eqref{bbarz1a} for the estimator $\bar{b}_{T, z_1, a}$ and Lemma \ref{ito for z1-lemma}, we obtain
\begin{align}
&\sqrt{T} \left( \bar{b}_{T, z_1, a} - b \right) = \label{asymptotics for bbarz1a} \\
&= \frac{\sqrt{T}}{4a \frac{1}{T} \int_0^T \left\langle X_1^{x_0}(t), z_1 \right\rangle_{\mathrm{Dom}((-A)^{\frac{1}{2}})}^2 \, dt} \left( Q_1 - \frac{4ab}{T} \int_0^T \left\langle X_1^{x_0}(t), z_1 \right\rangle_{\mathrm{Dom}((-A)^{\frac{1}{2}})}^2 \, dt \right) \notag \\
&= \frac{1}{4a \frac{1}{T} \int_0^T \left\langle X_1^{x_0}(t), z_1 \right\rangle_{\mathrm{Dom}((-A)^{\frac{1}{2}})}^2 \, dt} \frac{1}{\sqrt{T}} \left( \left\langle F X^{x_0}(T), X^{x_0}(T) \right\rangle_V - \left\langle F x_0, x_0 \right\rangle_V \right) \notag \\
&\phantom{=} - \frac{1}{2a \frac{1}{T} \int_0^T \left\langle X_1^{x_0}(t), z_1 \right\rangle_{\mathrm{Dom}((-A)^{\frac{1}{2}})}^2 \, dt} \frac{1}{\sqrt{T}} \int_0^T \left\langle F X^{x_0}(t), \Phi \, dB(t) \right\rangle_V. \notag
\end{align}

The first term converges to zero in probability as $T \rightarrow \infty$ (see the proof of Theorem \ref{asymptotic normality of bbar}) and define
\begin{align}
w_2(T) &= \frac{1}{\sqrt{T}} \int_0^T \left\langle F X^{x_0}(t), \Phi \, dB(t) \right\rangle_V \notag \\
&= \frac{1}{\sqrt{T}} \int_0^T \sum_{n=1}^{\infty} \sqrt{\lambda_n} \left\langle F X^{x_0}(t), \left( \begin{array}{c} 0 \\ e_n \end{array} \right) \right\rangle_V \, d\beta_n (t). \notag
\end{align}

Since the stochastic integral $w_2(T)$ is a special case of the stochastic integral $w_1(T)$ from the proof of Theorem \ref{asymptotic normality of bbar} (use $E = 0$, $Q_2 = 1$ and omit the minus sign), we will handle it much easier than above. The required scalar products equal to
\begin{align}
&\left\langle F_3 X_1^{x_0}(t), e_n \right\rangle_{L^2(D)} = \notag \\
&= \sum_{k=1}^{\infty} \sum_{l=k+1}^{\infty} \frac{z_{1,k} z_{1,l}}{D_{k,l}} 4a \sqrt{\alpha_k \alpha_l} \left( 8a^2 + b (\alpha_k - \alpha_l) \right) \left\langle X_1^{x_0}(t), e_l \right\rangle_{L^2(D)} \delta_{k,n} \notag \\
&\phantom{=} + \sum_{k=1}^{\infty} \sum_{l=k+1}^{\infty} \frac{z_{1,k} z_{1,l}}{D_{k,l}} 4a \sqrt{\alpha_k \alpha_l} \left( 8a^2 + b (\alpha_l - \alpha_k) \right) \left\langle X_1^{x_0}(t), e_k \right\rangle_{L^2(D)} \delta_{n,l} \notag \\
&\phantom{=} + \sum_{k=1}^{\infty} 2a z_{1,k}^2 \left\langle X_1^{x_0}(t), e_k \right\rangle_{L^2(D)} \delta_{k,n} \notag \\
&= (I) + (II) + (III), \notag
\end{align}
\begin{align}
&\left\langle F_4 X_2^{x_0}(t), e_n \right\rangle_{L^2(D)} = \notag \\
&= \sum_{k=1}^{\infty} z_{1,k}^2 \left\langle X_2^{x_0}(t), e_k \right\rangle_{L^2(D)} \delta_{k,n} \notag \\
&\phantom{=} + \sum_{k=1}^{\infty} \sum_{l=k+1}^{\infty} \frac{z_{1,k} z_{1,l}}{D_{k,l}} 16a^2 \sqrt{\alpha_k \alpha_l} \left\langle X_2^{x_0}(t), e_k \right\rangle_{L^2(D)} \delta_{n,l} \notag \\
&\phantom{=} + \sum_{k=1}^{\infty} \sum_{l=k+1}^{\infty} \frac{z_{1,k} z_{1,l}}{D_{k,l}} 16a^2 \sqrt{\alpha_k \alpha_l} \left\langle X_2^{x_0}(t), e_l \right\rangle_{L^2(D)} \delta_{k,n} \notag \\
&= (IV) + (V) + (VI). \notag
\end{align}

The appropriate limits of the "diagonal" terms equal to
\begin{align}
(A) &= \lim_{T \rightarrow \infty} \frac{1}{T} \int_0^T \sum_{n=1}^{\infty} \lambda_n (I)^2 \, dt = \sum_{n=1}^{\infty} \sum_{l=n+1}^{\infty} \frac{z_{1,n}^2 z_{1,l}^2}{D_{n,l}^2} \frac{4a \lambda_n \lambda_l \alpha_n}{b} \left( 8a^2 + b (\alpha_n - \alpha_l) \right)^2, \notag \\
(B) &= \lim_{T \rightarrow \infty} \frac{1}{T} \int_0^T \sum_{n=1}^{\infty} \lambda_n (II)^2 \, dt = \sum_{n=1}^{\infty} \sum_{k=1}^{n-1} \frac{z_{1,k}^2 z_{1,n}^2}{D_{k,n}^2} \frac{4a \lambda_n \lambda_k \alpha_n}{b} \left( 8a^2 + b (\alpha_n - \alpha_k) \right)^2, \notag \\
(C) &= \lim_{T \rightarrow \infty} \frac{1}{T} \int_0^T \sum_{n=1}^{\infty} \lambda_n (III)^2 \, dt = \frac{a}{b} \sum_{n=1}^{\infty} \frac{\lambda_n^2}{\alpha_n} z_{1,n}^4, \notag \\
(D) &= \lim_{T \rightarrow \infty} \frac{1}{T} \int_0^T \sum_{n=1}^{\infty} \lambda_n (IV)^2 \, dt = \frac{1}{4a} \sum_{n=1}^{\infty} \lambda_n^2 z_{1,n}^4, \notag \\
(E) &= \lim_{T \rightarrow \infty} \frac{1}{T} \int_0^T \sum_{n=1}^{\infty} \lambda_n (V)^2 \, dt = \sum_{n=1}^{\infty} \sum_{k=1}^{n-1} \frac{z_{1,k}^2 z_{1,n}^2}{D_{k,n}^2} 64a^3 \lambda_n \lambda_k \alpha_n \alpha_k, \notag \\
(F) &= \lim_{T \rightarrow \infty} \frac{1}{T} \int_0^T \sum_{n=1}^{\infty} \lambda_n (VI)^2 \, dt = \sum_{n=1}^{\infty} \sum_{l=n+1}^{\infty} \frac{z_{1,n}^2 z_{1,l}^2}{D_{n,l}^2} 64a^3 \lambda_n \lambda_l \alpha_n \alpha_l. \notag
\end{align}

Analogously as above, $\mathrm{Law} \left( w_2(T) \right)$ converges weakly to a~Gaussian distribution with a zero mean and variance given by $(A) + \ldots + (F)$, which can be simplified to
\begin{equation}
\frac{16a^3}{b} \sum_{k=1}^{\infty} \sum_{n=1}^{\infty} \frac{\lambda_k \lambda_n z_{1,k}^2 z_{1,n}^2}{D_{k,n}} + 2a \sum_{k=1}^{\infty} \sum_{n=1}^{\infty} \frac{\lambda_k \lambda_n (\alpha_k + \alpha_n) z_{1,k}^2 z_{1,n}^2}{D_{k,n}}.
\end{equation}

Since the multiplicative factor of $w_2(T)$ on the right--hand side of \eqref{asymptotics for bbarz1a} converges to $ - \frac{2b}{Q_1}$, we arrive at \eqref{limiting variance of bbarz1a}.
\end{proof}

\begin{remark} \label{remark: coordinates}
If we consider some special cases of "observation windows", the formulae for the limiting variances from Theorems \ref{asymptotic normality of abar}, \ref{asymptotic normality of bbar} and \ref{asymptotic normality of bbar - z1a} may be con\-si\-de\-rab\-ly simplified.
\begin{itemize}
\item The estimator $\bar{a}_{T,k}$ (defined by \eqref{abark}) has limiting variance $a$ for any $k \in \mathbb N$, i.e.,
\begin{equation}
\mathrm{Law} \left( \sqrt{T} \left( \bar{a}_{T, k} - a \right) \right) \stackrel{w^*}{\longrightarrow} N(0,a), \quad T \rightarrow \infty.
\end{equation}
Since the result does not depend on $k$ it does not matter which coordinate of the second component we observe. All the estimators $\bar{a}_{T,k}$ behave similarly.

\item The estimator $\bar{b}_{T,j,k}$ (defined by \eqref{bbarjk}) satisfies
\begin{equation}
\mathrm{Law} \left( \sqrt{T} \left( \bar{b}_{T, j, k} - b \right) \right) \stackrel{w^*}{\longrightarrow} N \left(0, \frac{4ab}{\alpha_j} + \frac{2b^2}{a} \right), \quad T \rightarrow \infty,
\end{equation}
for any $j,k \in \mathbb N$, $j \neq k$.

\item However, if $j=k$ then the estimator $\bar{b}_{T,j,j}$ satisfies
\begin{equation}
\mathrm{Law} \left( \sqrt{T} \left( \bar{b}_{T, j, j} - b \right) \right) \stackrel{w^*}{\longrightarrow} N \left(0, \frac{4ab}{\alpha_j} \right), \quad T \rightarrow \infty.
\end{equation}

\item The estimator $\bar{b}_{T, f_j, a}$ (defined by \eqref{bbarz1a}) satisfies
\begin{equation}
\mathrm{Law} \left( \sqrt{T} \left( \bar{b}_{T, f_j, a} - b \right) \right) \stackrel{w^*}{\longrightarrow} N \left(0, \frac{4ab}{\alpha_j} + \frac{b^2}{a} \right), \quad T \rightarrow \infty,
\end{equation}
for any $j \in \mathbb N$.
\end{itemize}

By comparing the last three points, we obtain two following observations, which are illustrated by simulations in Section \ref{section: implementation}: \\

1. Since the limiting variance of the estimator $\bar{b}_{T,j,k}$ is greater than the limiting variance of the estimator $\bar{b}_{T, f_j, a}$, it seems that it is better to know the true value of the parameter $a$ exactly, instead of estimating it. However, if $j=k$ then the li\-mi\-ting variance of the estimator $\bar{b}_{T,j,j}$ is even smaller. So estimating the parameter $a$ by the "window" $(0, e_j)^{\top}$ and then using the "window" $(f_j, 0)^{\top}$ to estimate the parameter $b$ should be even better than knowing $a$ exactly. \\

2. Since $\alpha_j \rightarrow \infty$ as $j \rightarrow \infty$, the limiting variance $\frac{4ab}{\alpha_j}$ gets smaller with bigger $j$. Hence it is better to use the "observation coordinate" with bigger $j$ rather than using the smaller one.
\end{remark}

\section{Example} \label{section: examples}
\begin{example} \label{wave equation}
Consider the stochastic wave equation with Dirichlet boundary conditions
\begin{align}
\frac{\partial^2 u}{\partial t^2} (t, \xi) &= b\Delta u(t, \xi) - 2a \frac{\partial u}{\partial t}(t, \xi) + \eta (t, \xi), \quad (t, \xi) \in \mathbb R_+ \times D, \label{example 1} \\
u(0, \xi) &= u_1(\xi), \quad \xi \in D, \notag \\
\frac{\partial u}{\partial t} (0, \xi) &= u_2(\xi), \quad \xi \in D, \notag \\
u(t, \xi) &= 0, \quad (t, \xi) \in \mathbb R_+ \times \partial D, \notag
\end{align}
where $D \subset \mathbb R^d$ is a bounded domain with a smooth boundary, $\eta$ is a noise process that is the formal time derivative of a space dependent Brownian motion and $a > 0$, $b > 0$ are unknown parameters.

We rewrite the hyperbolic system \eqref{example 1} as an infinite dimensional stochastic di\-ffer\-en\-tial equation \eqref{linear equation with parameters}
\begin{align}
dX(t) &= \mathcal A X(t) \, dt + \Phi \, dB(t), \notag \\
X(0) &= x_0 = \left( \begin{array}{c}
u_1\\
u_2
\end{array} \right) \notag
\end{align}
for $t \geq 0$, setting $A = \Delta|_{\mathrm{Dom}(A)}$, $\mathrm{Dom}(A) = H^2(D) \cap H^1_0 (D)$, $\text{Dom}(\mathcal A) = \mathrm{Dom}(A) \times \mathrm{Dom}( (-A)^{\frac{1}{2}})$ and
$$
\mathcal A = \left( \begin{array}{cc}
0&I\\
bA&-2aI
\end{array} \right).
$$

The operator $\mathcal A$ generates strongly continuous semigroup in the space $V =$ \\ $\mathrm{Dom}((-A)^{\frac{1}{2}}) \times L^2(D)$. The driving process may take a form $B(t) = (0, \tilde{B}(t))^{\top}$, where $(\tilde{B}(t), t \geq 0)$ is a standard cylindrical Brownian motion on $L^2(D)$. The noise $\eta$ is modelled as the formal derivative $\Phi_1 \, \frac{d\tilde{B}(t)}{dt}$, $\Phi_1 \in \mathcal L_2(L^2(D))$ and $\Phi \in \mathcal L_2(V)$ is given by
$$
\Phi = \left( \begin{array}{cc}
0&0\\
0&\Phi_1
\end{array} \right).
$$

With this setup, all assumptions of Section \ref{preliminaries} are fulfilled, so Theorem \ref{abarbbar - general case} may be used for estimation of parameters. Theorems \ref{asymptotic normality of abar}, \ref{asymptotic normality of bbar} and \ref{asymptotic normality of bbar - z1a}, which show asymptotic normality of these estimators, may be applied as well.
\end{example}

\section{Implementation and statistical evidence} \label{section: implementation}
We have generated a trajectory of the solution to the stochastic differential equation \eqref{example 1} from Example \ref{wave equation} in the program $\mathsf R$ by Euler's method (see \cite{iacus}). The setup of Example \ref{wave equation} is specified as follows:

\begin{itemize}
\item $D = (0,1)$ -- We consider the wave equation for the oscillating rod modelled as a function from the space $L^2((0,1))$.
\item The choice of the orthonormal basis of the space $L^2((0,1))$ is
$$
\{e_n (\xi) = \sqrt{2} \sin (n \pi \xi), n = 1, \ldots, N\},
$$
whose elements satisfy the boundary condition $u(t, 0) = 0 = u(t, 1)$ for any $t > 0$.
\item $N = 10$ -- We have restricted the expansion of the previous basis only to $N = 10$ functions. The accuracy of our results may suffer due to this limitation, nevertheless we will show that our results are sufficiently satisfactory.
\item $T = 1000$ -- The length of the time interval.
\item $\Delta t = 0.0001$ -- The mesh of the partition of the time interval $[0, T]$.
\item The intial functions $u_1$ and $u_2$ have the following form
$$
u_1 (\xi) = \sqrt{2} \sum_{n=1}^{N} \sin (n \pi \xi) = u_2 (\xi).
$$
This means that $\left\langle u_1, e_n \right\rangle_{L^2(D)} = 1 = \left\langle u_2, e_n \right\rangle_{L^2(D)}$ for any $n = 1, \ldots, N$, so the initial conditions are the same in all $N$ dimensions.
\item $a = 1$, $b = 0.2$ -- The values of the parameters that are to be estimated.
\item $- \alpha_n = - n^2 \pi^2$ -- The eigenvalues of the operator $A$. With this setup the operator $A$ is the Laplacian operator $A = \Delta |_{\mathrm{Dom}(A)}$ with $\mathrm{Dom}(A) = H^2((0,1)) \cap H_0^1((0,1))$.
\item $\lambda_n = \frac{1000}{n^2}$ -- The eigenvalues of the operator $Q$. (The eigenvalues of the ope\-ra\-tor $\Phi_1$ equal to $\sqrt{\lambda_n}$ for any $n = 1, \ldots, N$.) The eigenvalues are chosen in the way that the sum $\sum_{n=1}^{\infty} \lambda_n$ is convergent. The multiplication factor is chosen in order to increase the values of the $\lambda_n$. Otherwise the noise would be in "higher" dimensions so small that it would be practically vanishing.
\end{itemize}

We will focus on the estimation of the parameter $a$ using the first and tenth coordinate of the second component, i.e., $\bar{a}_{T, k=1}$ and $\bar{a}_{T, k=10}$ (see \eqref{abark}), and the estimation of the parameter $b$ using the estimators $\bar{a}_{T, k=1}$ and $\bar{a}_{T, k=10}$ of $a$ together with the first and tenth coordinate of the second component, respectively, i.e., $\bar{b}_{T, j=1, k=1}$ and $\bar{b}_{T, j=10, k=10}$ (see \eqref{bbarjk}). We will also study the estimators $\bar{b}_{T, j=1, a=1}$ and $\bar{b}_{T, j=10, a=1}$, which depend only on the "observation window" $(f_1, 0)^{\top}$ (or $(f_{10}, 0)^{\top}$) with the parameter $a$ supposed to be known (see \eqref{bbarz1a}).

Using the generated trajectory, we obtained the following results:
$$
\begin{array}{llll}
\bar{a}_{T, k=1} &= 0.9994, & \bar{a}_{T, k=10} &= 0.9978, \\
\bar{b}_{T, j=1, k=1} &= 0.1902, & \bar{b}_{T, j=10, k=10} &= 0.2001, \\
\bar{b}_{T, j=1, a=1} &= 0.1901, & \bar{b}_{T, j=10, a=1} &= 0.1997.
\end{array}
$$

Time evolution of these estimators is depicted in Figures \ref{figure: abar}, \ref{figure: bbar} and \ref{figure: bbaraknown}.

\begin{figure}[h!]
\centering
\subfigure[The estimator $\bar{a}_{t, k=1}$]{\epsfig{file=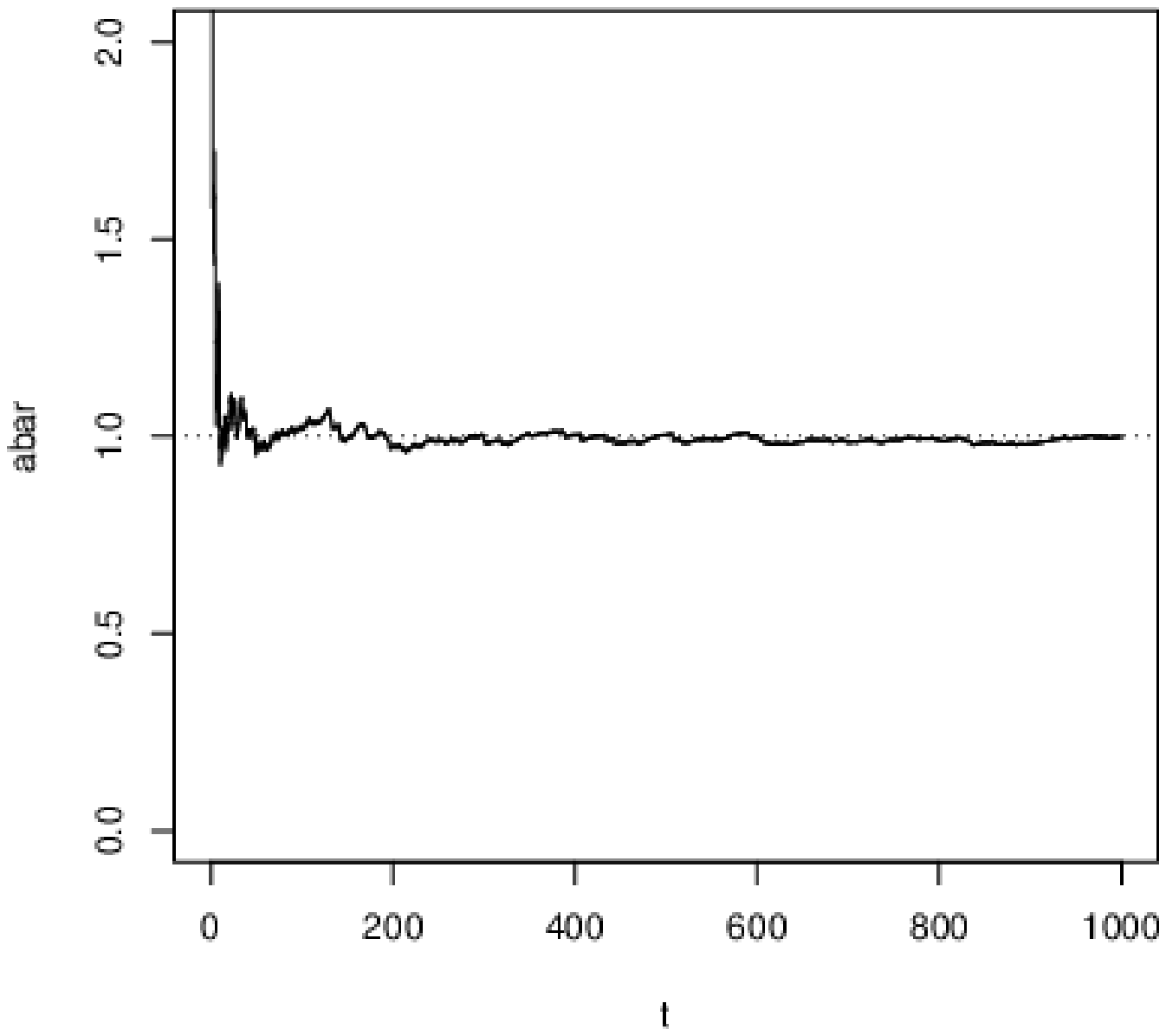,height=0.4\textwidth}}
\hspace{50pt}
\subfigure[The estimator $\bar{a}_{t, k=10}$]{\epsfig{file=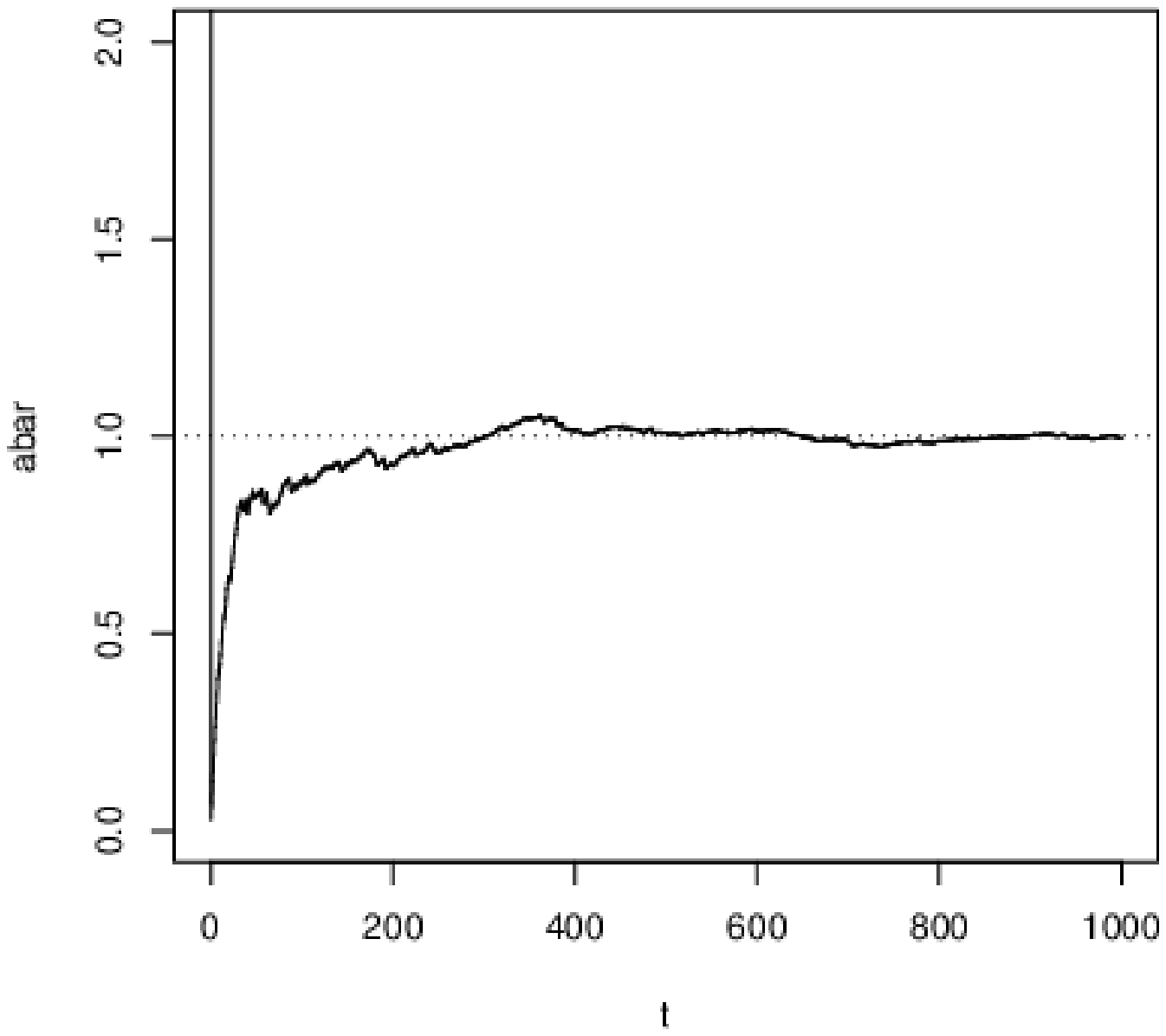,height=0.4\textwidth}}
\caption{The time evolution of the estimators $\bar{a}_{t,k}$ for $k=1$ and $k=10$} \label{figure: abar}
\end{figure}

\begin{figure}[h!]
\centering
\subfigure[The estimator $\bar{b}_{t, j=1, k=1}$]{\epsfig{file=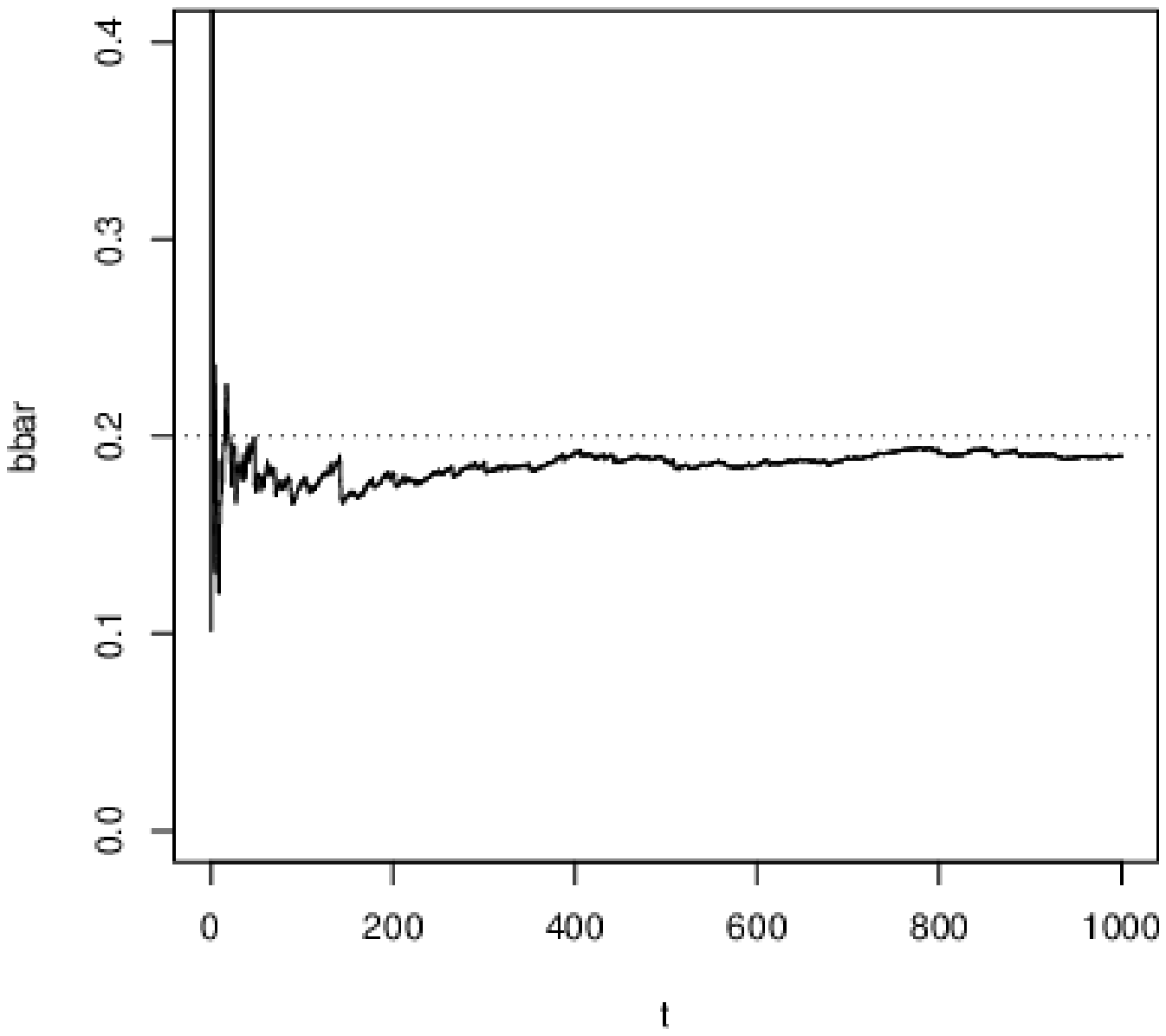,height=0.4\textwidth}}
\hspace{50pt}
\subfigure[The estimator $\bar{b}_{t, j=10, k=10}$]{\epsfig{file=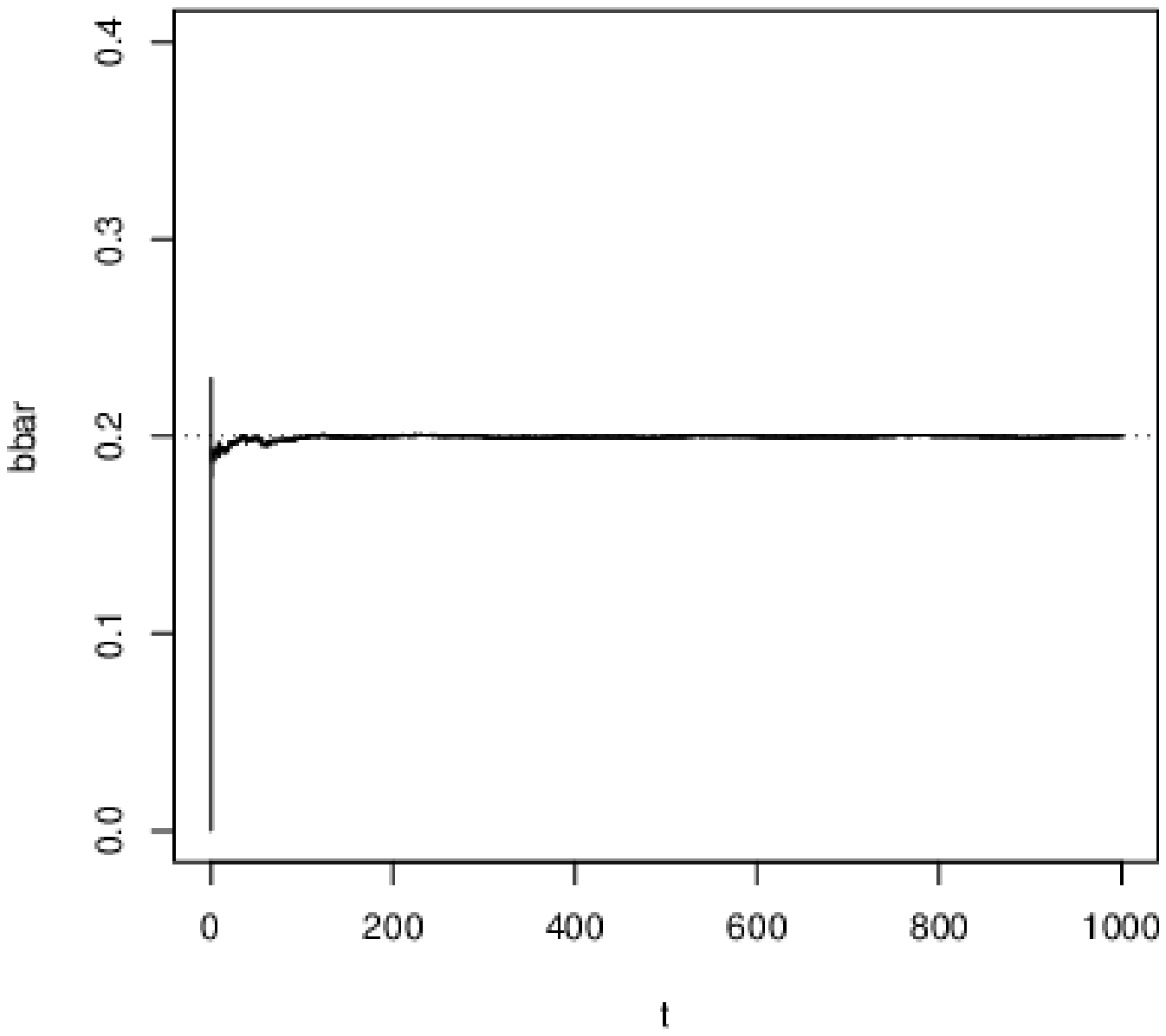,height=0.4\textwidth}}
\caption{The time evolution of the estimators $\bar{b}_{t,j,k}$ for $j=k=1$ and $j=k=10$} \label{figure: bbar}
\end{figure}

\begin{figure}[h!]
\centering
\subfigure[The estimator $\bar{b}_{t, j=1, a=1}$]{\epsfig{file=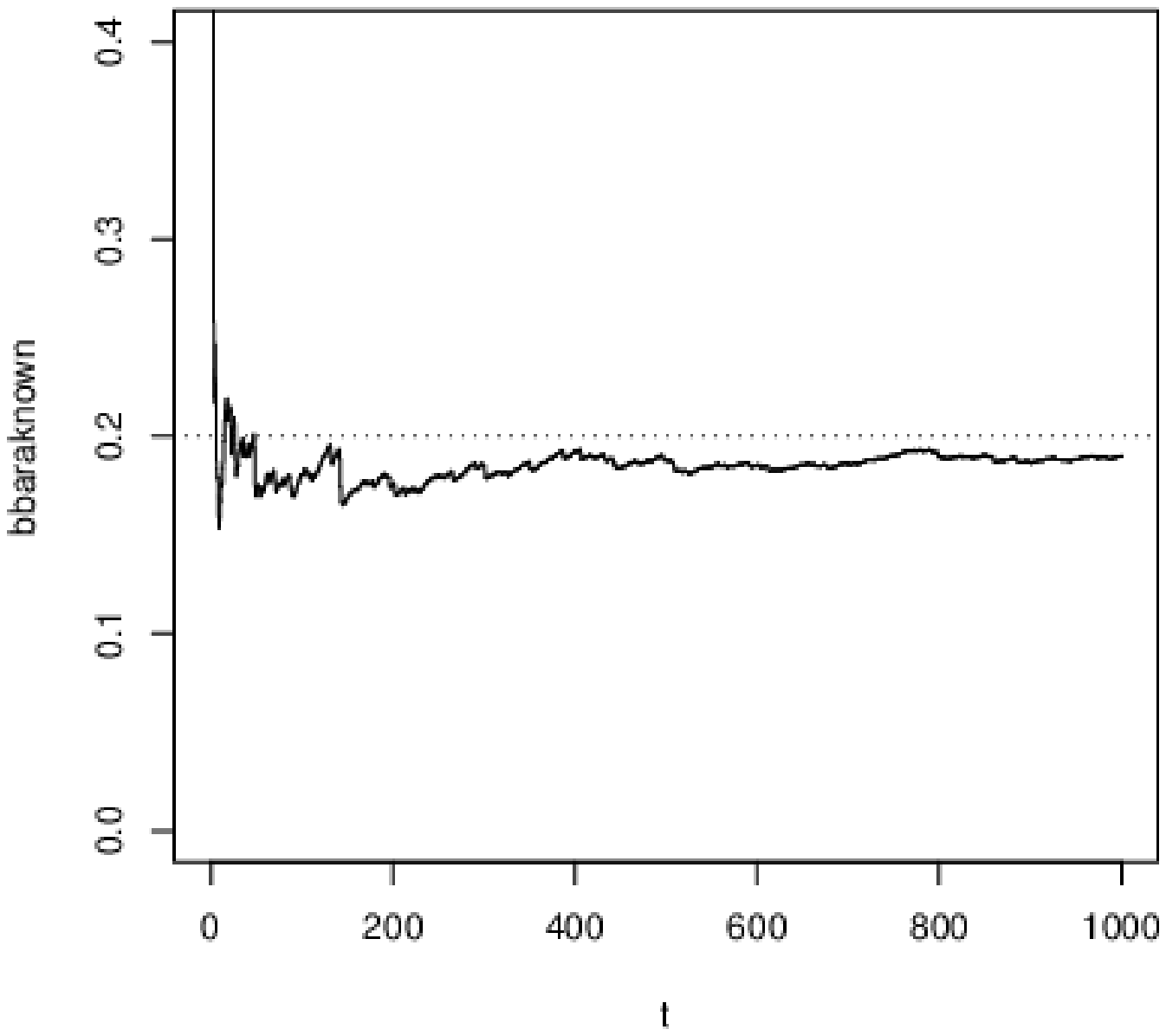,height=0.4\textwidth}}
\hspace{50pt}
\subfigure[The estimator $\bar{b}_{t, j=10, a=1}$]{\epsfig{file=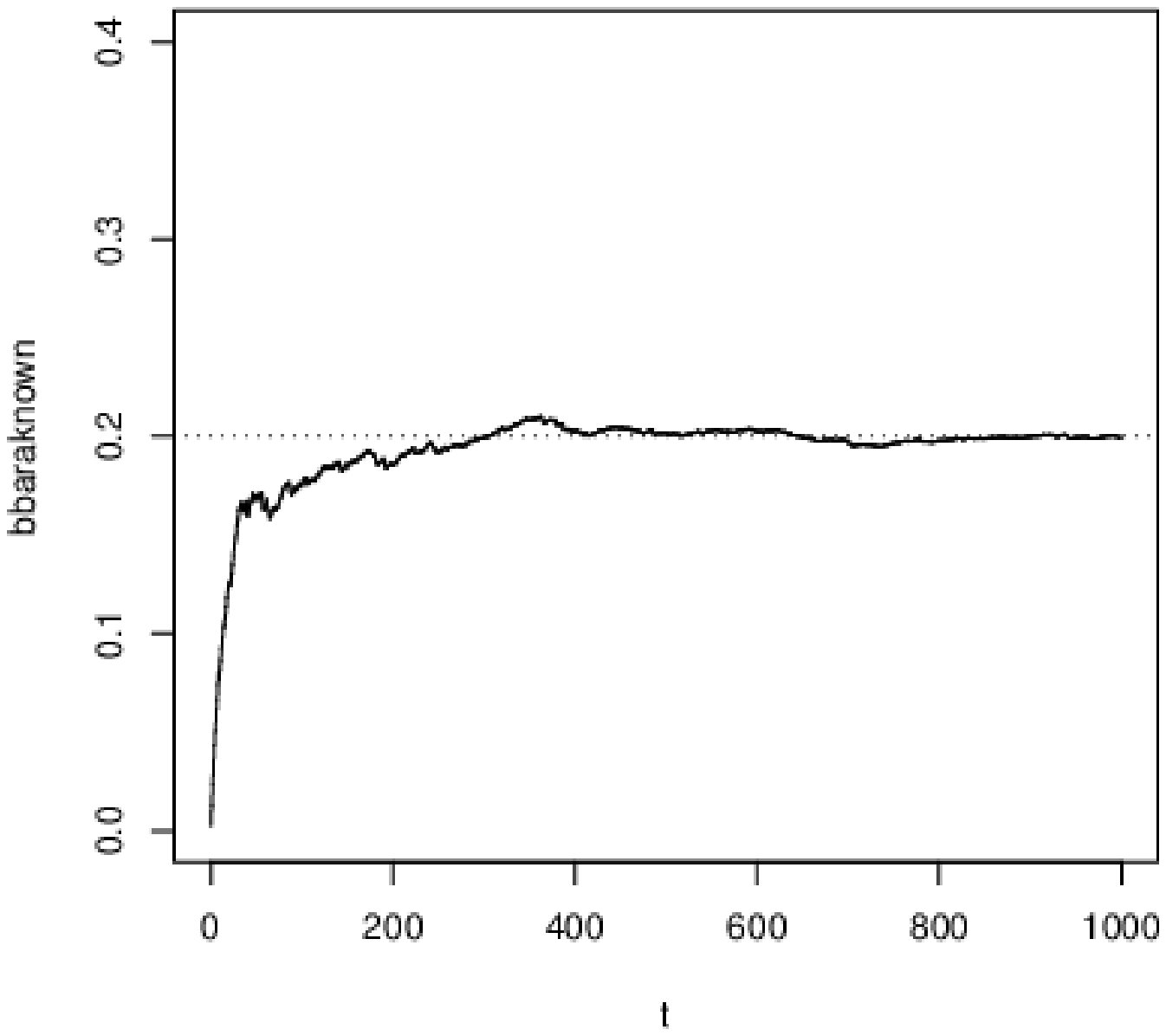,height=0.4\textwidth}}
\caption{The time evolution of the estimators $\bar{b}_{t,j,a}$ for $j=1$, $j=10$ and $a=1$} \label{figure: bbaraknown}
\end{figure}

Although the results seems satisfactory (especially for the estimator $\bar{b}_{T, j=10, k=10}$), we have made $100$ more simulations in a similar manner. The values of the estimators $\bar{a}_{T, k=1}$ and $\bar{a}_{T, k=10}$ are depicted in Figure \ref{figure: abar-overall} and the values of the estimators $\bar{b}_{T, j=1, k=1}$ and $\bar{b}_{T, j=10, k=10}$ are depicted in Figure \ref{figure: bbar-overall}. Moreover, the values of the estimators $\bar{b}_{T, j=1, a=1}$ and $\bar{b}_{T, j=10, a=1}$ are shown in Figure \ref{figure: bbaraknown-overall}. The overall statistics can be found in Tables \ref{Table 1} and \ref{Table 2}.

\begin{figure}[h!]
\centering
\subfigure[The values of $\bar{a}_{T, k=1}$ -- Overall]{\epsfig{file=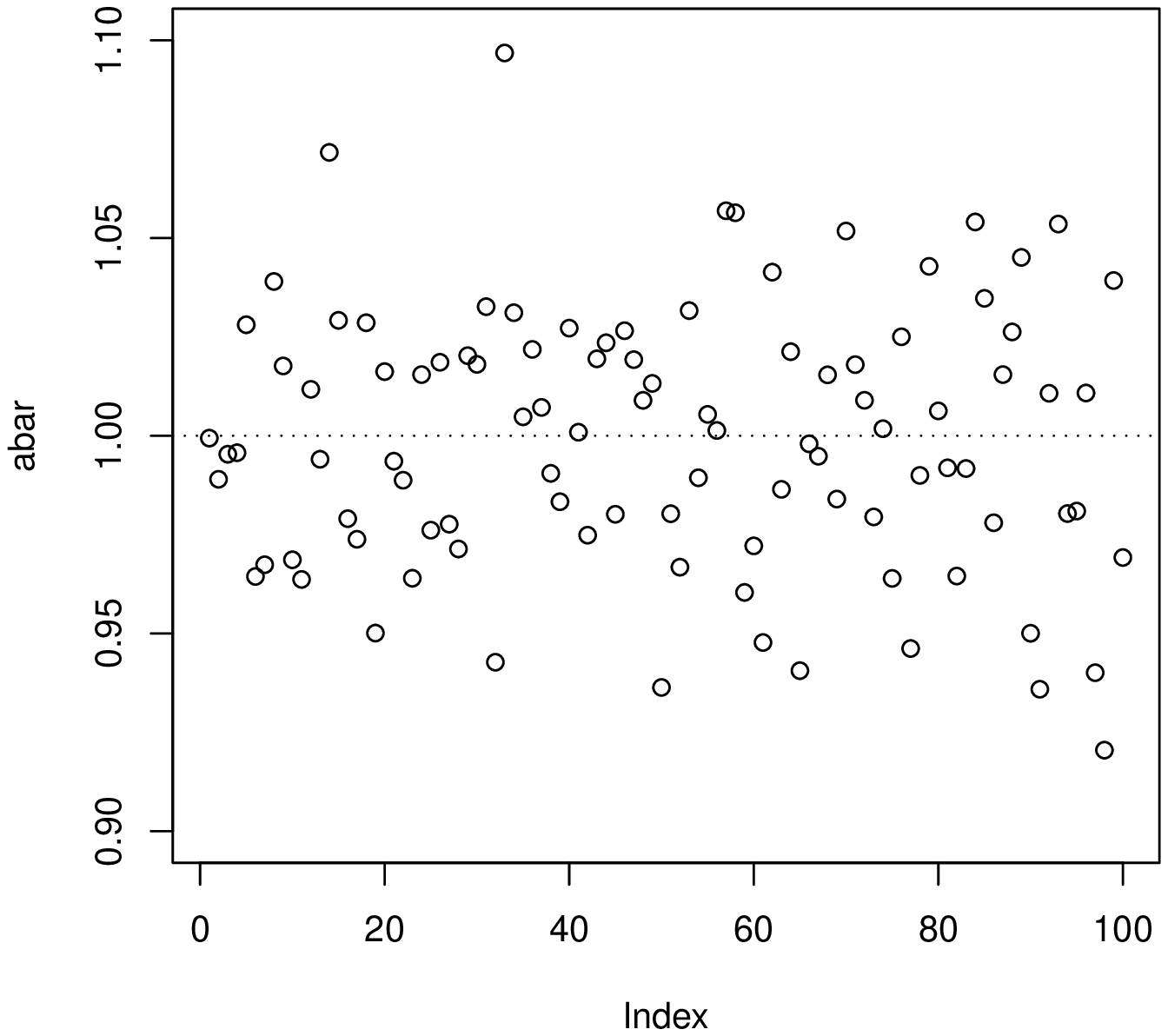,height=0.4\textwidth}}
\hspace{50pt}
\subfigure[The values of $\bar{a}_{T, k=10}$ -- Overall]{\epsfig{file=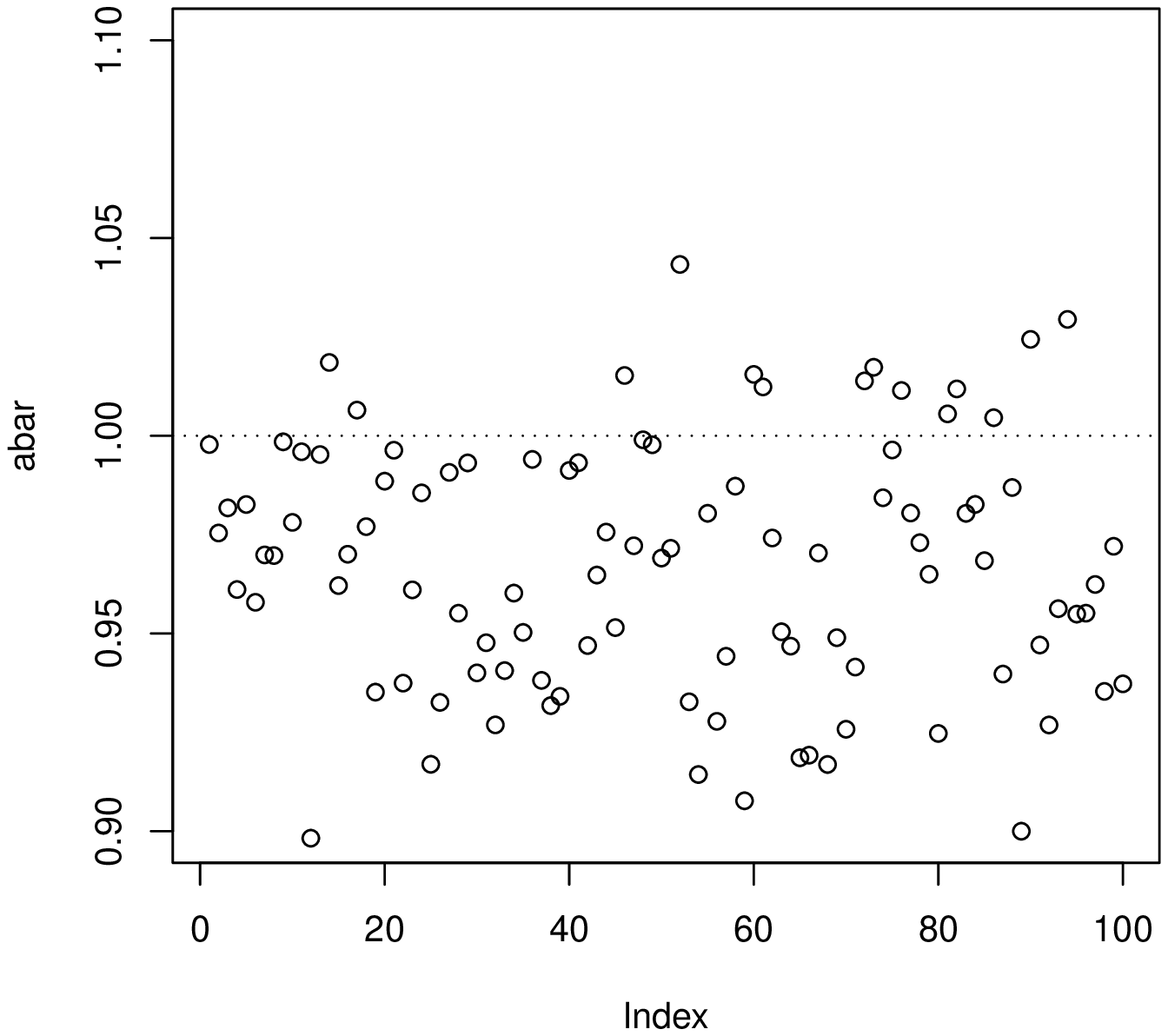,height=0.4\textwidth}}
\caption{The estimators $\bar{a}_{t,k}$ for $k=1$ and $k=10$ based on larger sample} \label{figure: abar-overall}
\end{figure}

\begin{figure}[h!]
\centering
\subfigure[The values of $\bar{b}_{T, j=1, k=1}$ -- Overall]{\epsfig{file=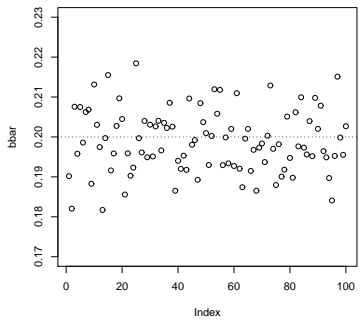,height=0.4\textwidth}}
\hspace{50pt}
\subfigure[The values of $\bar{b}_{T, j=10, k=10}$ -- Overall]{\epsfig{file=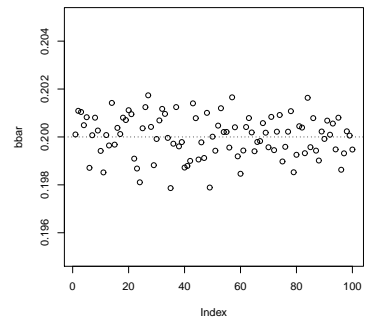,height=0.4\textwidth}}
\caption{The estimators $\bar{b}_{t,j,k}$ for $j=k=1$ and $j=k=10$ based on larger sample} \label{figure: bbar-overall}
\end{figure}

\begin{figure}[h!]
\centering
\subfigure[The values of $\bar{b}_{T, j=1, a=1}$ -- Overall]{\epsfig{file=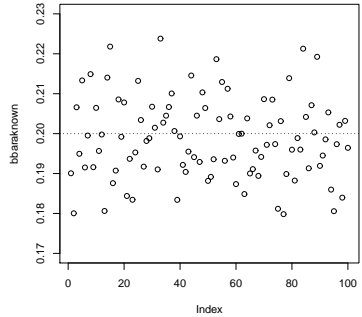,height=0.4\textwidth}}
\hspace{50pt}
\subfigure[The values of $\bar{b}_{T, j=10, a=1}$ -- Overall]{\epsfig{file=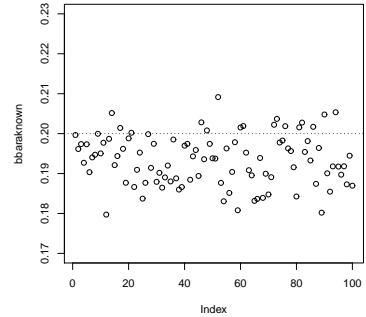,height=0.4\textwidth}}
\caption{The estimators $\bar{b}_{t,j,a}$ for $j=1$, $j=10$ and $a=1$ based on larger sample} \label{figure: bbaraknown-overall}
\end{figure}

\begin{table}[h!]
\centering
\begin{tabular}{|l||c|c|} \hline
& $\bar{a}_{T, k=1}$ & $\bar{a}_{T, k=10}$ \\ \hline \hline
Mean & 0.9995 & 0.9673 \\ \hline
Var & 1.1039 & 1.0036 \\ \hline
Var -- Theoretical & 1.0000 & 1.0000 \\ \hline
Relative error -- Maximal & 10 \% & 10 \% \\ \hline
Relative error -- Typical & $\leq$ 4 \% & $\leq$ 6 \% \\ \hline
$p$--value & 0.9161 & 0.6340 \\ \hline
\end{tabular}
\bigskip
\caption{The results of the simulations -- Part I} \label{Table 1}
\end{table}

\begin{table}[h!]
\centering
\begin{tabular}{|l||c|c|c|c|} \hline
& $\bar{b}_{T, j=1, k=1}$ & $\bar{b}_{T, j=10, k=10}$ & $\bar{b}_{T, j=1, a=1}$ & $\bar{b}_{T, j=10, a=1}$ \\ \hline \hline
Mean & 0.1989 & 0.2000 & 0.1988 & 0.1935 \\ \hline
Var & 0.0616 & 0.0008 & 0.1033 & 0.0402 \\ \hline
Var -- Theoretical & 0.0811 & 0.0008 & 0.1211 & 0.0408 \\ \hline
Relative error -- Maximal & 9 \% & 1 \% & 12 \% & 10 \% \\ \hline
Relative error -- Typical & $\leq$ 5 \% & $\leq$ 0.6 \% & $\leq$ 6 \% & $\leq$ 6 \% \\ \hline
$p$--value & 0.7904 & 0.2986 & 0.2825 & 0.6668 \\ \hline
\end{tabular}
\bigskip
\caption{The results of the simulations -- Part II} \label{Table 2}
\end{table}

The row "Var" stands for the variance of $\sqrt{T}(\bar{a}_{T,k} - a)$ (and its analogues in the following columns). The actual variances of the estimators are $1000$ times smaller. The theoretical values of the limiting variances (see formulae in Remark \ref{remark: coordinates}) can be found in the row "Var -- Theoretical".

Since the absolute errors of the estimators can be viewed in Figures \ref{figure: abar-overall}, \ref{figure: bbar-overall} and \ref{figure: bbaraknown-overall}, we mention only relative errors: maximal (which is the relative error of the worst estimator) and typical (that is the level below which $75$ \% of the errors belong).

The $p$--values of the Wilk--Shapiro test of normality can be found in the last row. Since they are greater than $0.05$, we do not reject the hypothesis of normality on $5 \%$--significance level. The Q--Q plots of the centered and rescaled estimators are shown in Figures \ref{figure: AN for abar}, \ref{figure: AN for bbar} and \ref{figure: AN for bbaraknown}.

\begin{figure}[h!]
\centering
\subfigure[Q--Q plot of $\sqrt{T}(\bar{a}_{T, k=1} - a)$]{\epsfig{file=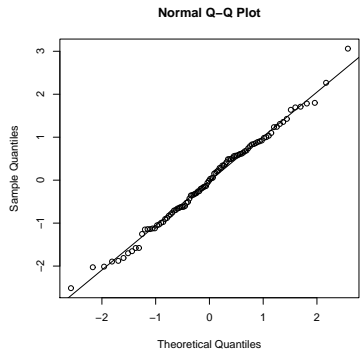,height=0.4\textwidth}}
\hspace{50pt}
\subfigure[Q--Q plot of $\sqrt{T}(\bar{a}_{T, k=10} - a)$]{\epsfig{file=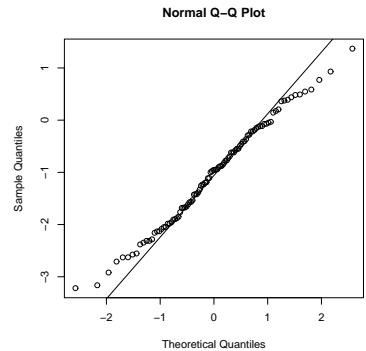,height=0.4\textwidth}}
\caption{Asymptotic normality of $\bar{a}_{T, k}$ for $k=1$ and $k=10$} \label{figure: AN for abar}
\end{figure}

\begin{figure}[h!]
\centering
\subfigure[Q--Q plot of $\sqrt{T}(\bar{b}_{T, j=1, k=1} - b)$]{\epsfig{file=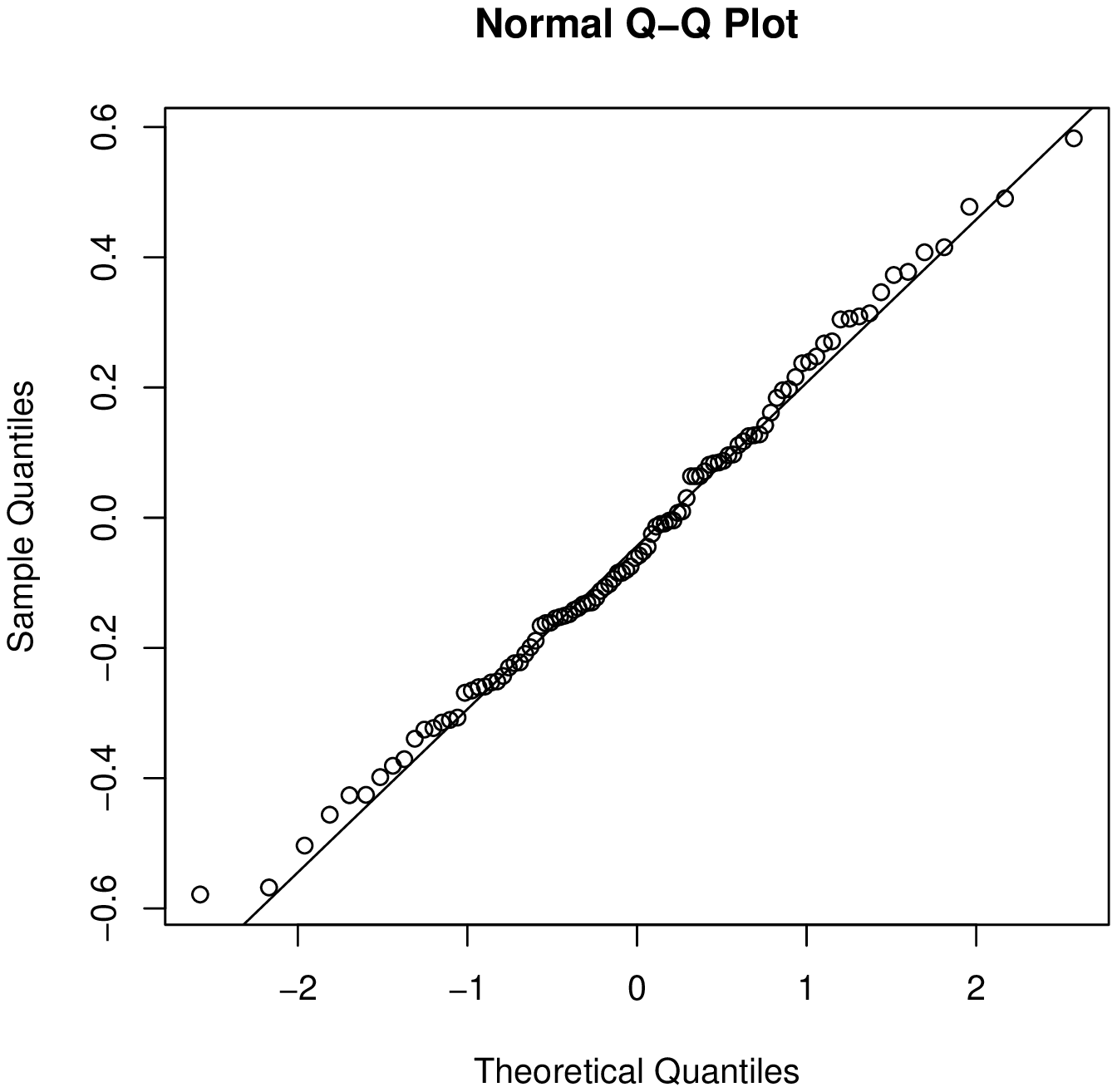,height=0.4\textwidth}}
\hspace{50pt}
\subfigure[Q--Q plot of $\sqrt{T}(\bar{b}_{T, j=10, k=10} - b)$]{\epsfig{file=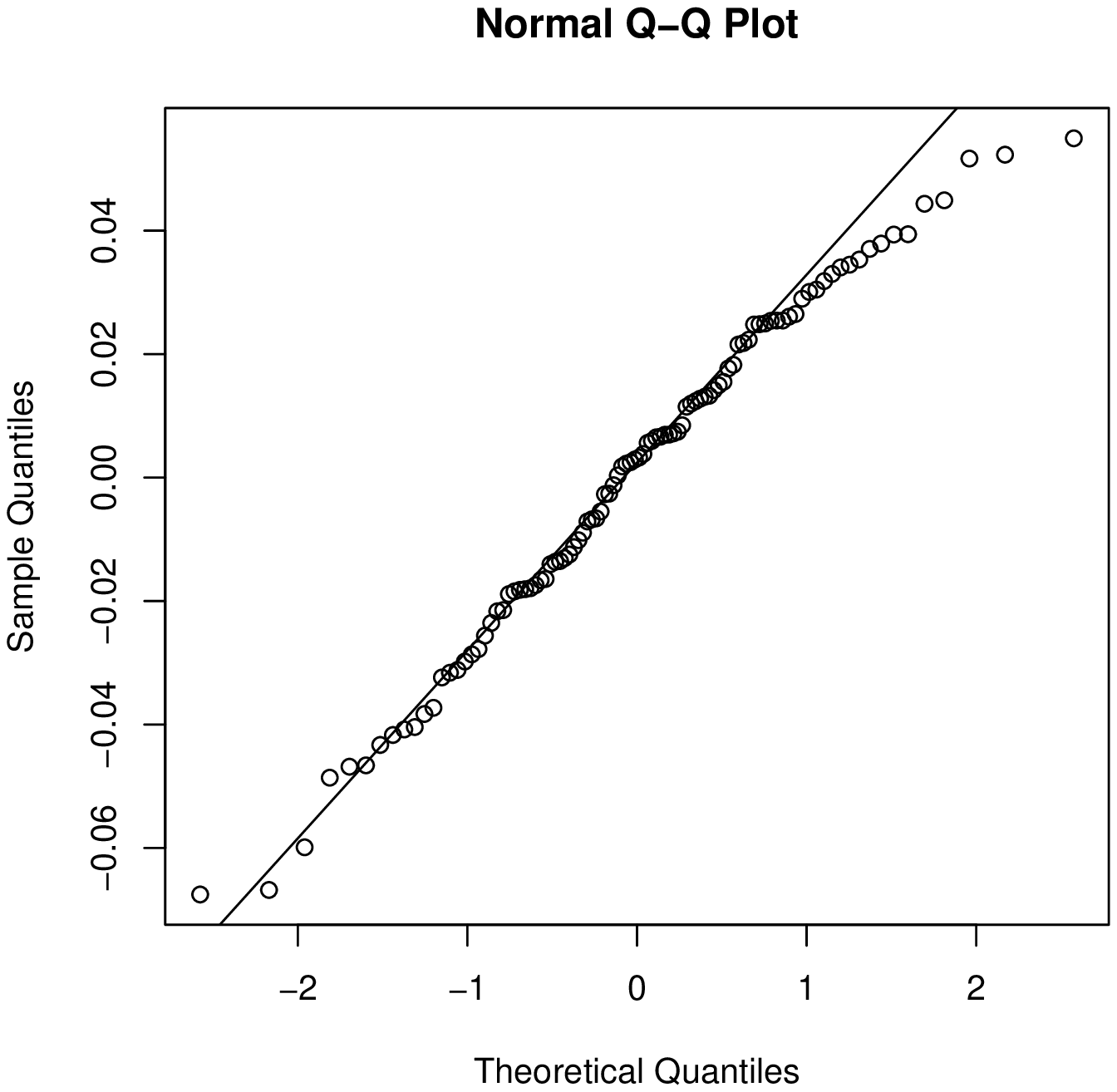,height=0.4\textwidth}}
\caption{Asymptotic normality of $\bar{b}_{T, j, k}$ for $j=k=1$ and $j=k=10$} \label{figure: AN for bbar}
\end{figure}

\begin{figure}[h!]
\centering
\subfigure[Q--Q plot of $\sqrt{T}(\bar{b}_{T, j=1, a=1} - b)$]{\epsfig{file=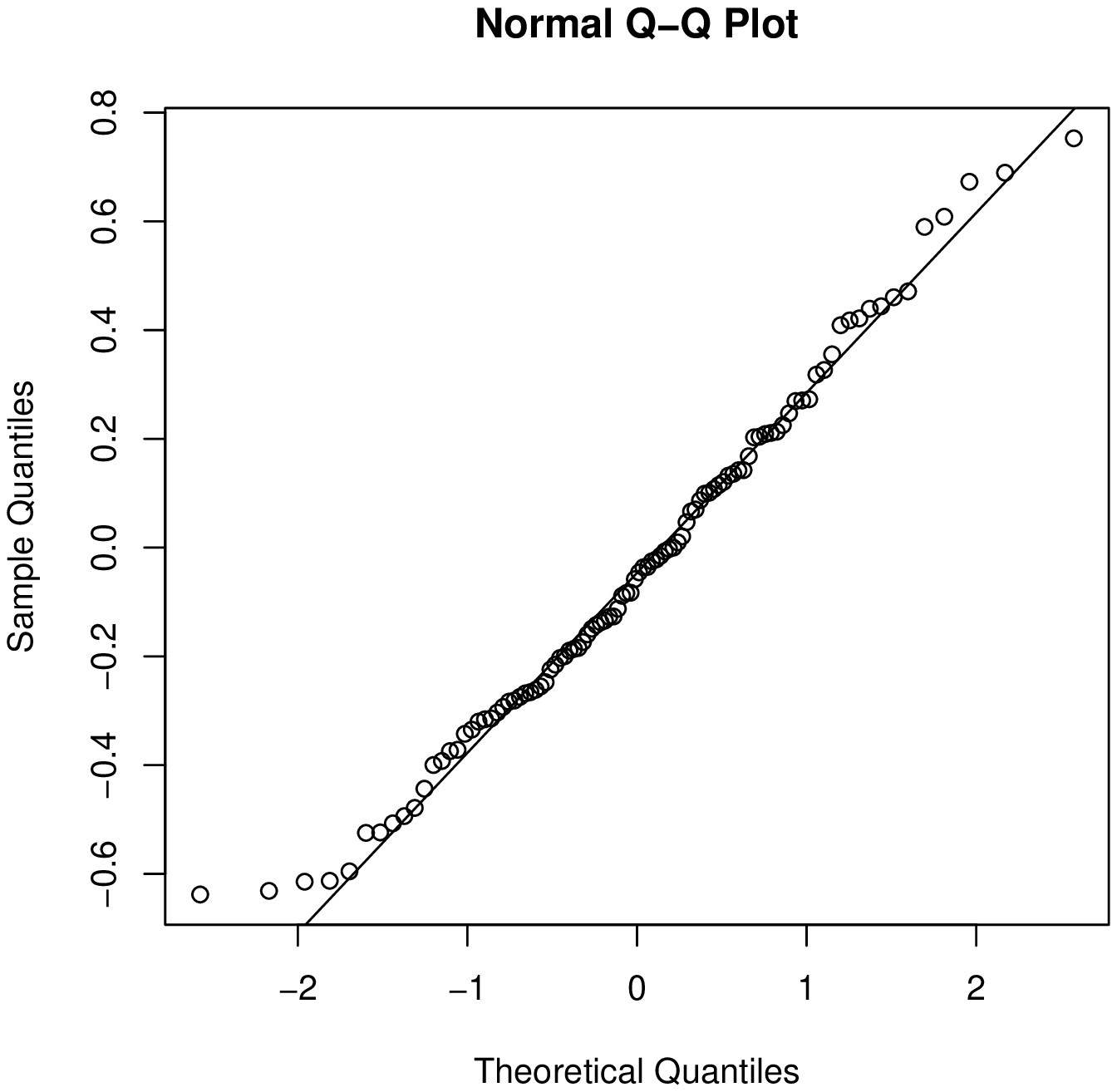,height=0.4\textwidth}}
\hspace{50pt}
\subfigure[Q--Q plot of $\sqrt{T}(\bar{b}_{T, j=10, a=1} - b)$]{\epsfig{file=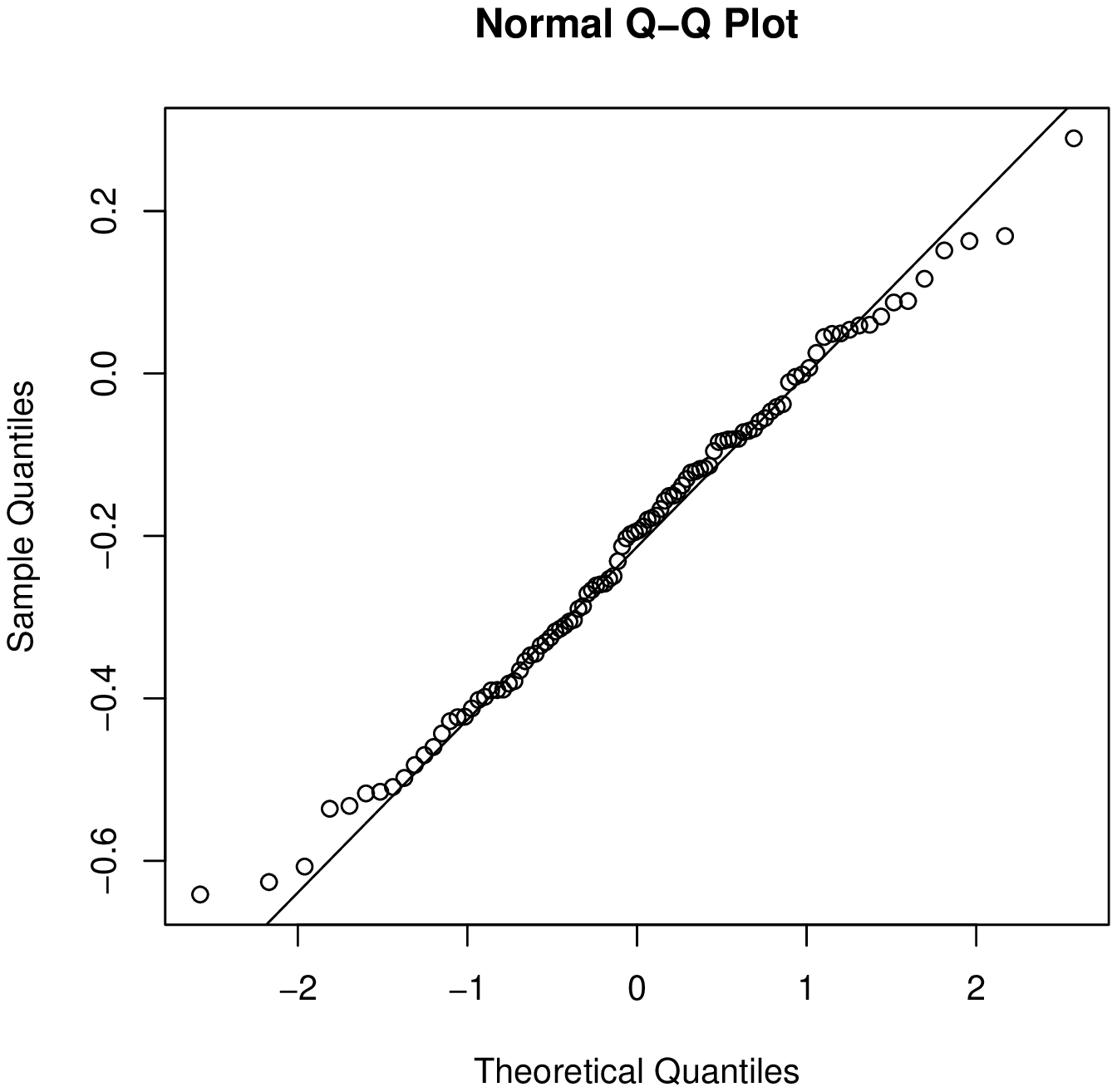,height=0.4\textwidth}}
\caption{Asymptotic normality of $\bar{b}_{T, j, a}$ for $j=1$, $j=10$ and $a=1$} \label{figure: AN for bbaraknown}
\end{figure}

From the previous simulations the main three observations follow:

\begin{itemize}
\item The estimators $\bar{a}_{T, k=1}$ and $\bar{a}_{T, k=10}$ behave similarly (the estimator $\bar{a}_{T, k=10}$ would require some bigger time $T$, though), however there is a big difference between the estimators of the parameter $b$. Not only that the estimator $\bar{b}_{T, j=10, k=10}$ behaves better than the estimator $\bar{b}_{T, j=1, k=1}$ (it has better mean and lesser variance and relative errors), but also by comparing the estimator $\bar{b}_{T, j=1, k=1}$ with $\bar{b}_{T, j=1, a=1}$ (and $\bar{b}_{T, j=10, k=10}$ with $\bar{b}_{T, j=10, a=1}$), it seems that it is better to work with the paramater $a$ unknown. (See Remark \ref{remark: coordinates}.)
\item From the comparing of the rows "Var" and "Var--Theoretical" it seems that the computed limiting variances from Remark \ref{remark: coordinates} are accurate.
\item From the Figures \ref{figure: AN for abar}, \ref{figure: AN for bbar} and \ref{figure: AN for bbaraknown} and from the results of Wilk--Shapiro tests it seems that the estimators are asymptotically normal as prescribed by Theorems \ref{asymptotic normality of abar}, \ref{asymptotic normality of bbar} and \ref{asymptotic normality of bbar - z1a}.
\end{itemize}

These simulations precisely match the results obtained in the theoretical part of the paper.

\newpage


\begin{thebibliography}{99}

\bibitem{dapratozabczyk} G. Da Prato, J. Zabczyk, {\it Stochastic Equations in Infinite Dimensions},
Cambridge University Press, Cambridge, 1992.

\bibitem{huebner} M. Huebner, B. L. Rozovskii, {\it On asymptotic properties of maximum likelihood estimator for parabolic stochastic PDE's},
Probability Theory and Related Fields \textbf{103} (1995), no. 2, 143--163.

\bibitem{iacus} S. M. Iacus, {\it Simulation and Inference for Stochastic Differential Equations},
Springer Series in Statistics, 2008.

\bibitem{janak} J. Jan\' ak, {\it Parameter estimation for stochastic partial differential equations of second order},
Applied Mathematics and Optimization -- submitted to, \texttt{https:// arxiv.org/abs/1806.04045}.

\bibitem{koskiloges} T. Koski, W. Loges, {\it On identification for distributed parameter systems},
Stochastic Processes -- Mathematics and Physics II, Proceedings of the 2nd BiBoS Symposium (1985), 152--159.

\bibitem{koskiloges 2} T. Koski, W. Loges, {\it Asymptotic statistical inference for a stochastic heat flow problem},
Statistics \& Probability Letters \textbf{3} (1985), no. 4, 185--189.

\bibitem{kutoyants} Y. A. Kutoyants, {\it Statistical Inference for Ergodic Diffusion Processes},
Springer, London, 2004.

\bibitem{liulototsky} W. Liu, S. V. Lototsky, {\it Estimating speed and damping in the stochastic wave equation},
\texttt{https://arxiv.org/abs/0810.0046}

\bibitem{liulototsky2} W. Liu, S. V. Lototsky, {\it Parameter estimation in diagonalizable stochastic hyperbolic equations},
\texttt{https://arxiv.org/abs/0906.4353}

\bibitem{maslowskipospisil} B. Maslowski, J. Posp\' i\v sil, {\it Ergodicity and parameter estimates for infinite--dimensional fractional Ornstein--Uhlenbeck process}, Applied Mathematics and Optimization \textbf{57} (2008), no. 3, 401--429.

\bibitem{maslowskitudor} B. Maslowski, C. A. Tudor, {\it Drift parameter estimation for infinite--dimensional fractional Ornstein--Uhlenbeck process},
Bulletin des Sciences Math\' ematiques \textbf{137} (2013), no. 7, 880--901.

\bibitem{tudor} C. A. Tudor, F. G. Viens, {\it Statistical aspects of the fractional stochastic calculus},
The Annals of Statistics \textbf{35} (2007), no. 3, 1183--1212.

\end{thebibliography}
\end{document}